\title[Navier-Stokes-Korteweg equations]{Time-asymptotic stability of composite wave for the one-dimensional compressible fluid of Kortwewg type}
\author[Han]{Sungho Han}
\address[Sungho Han]{\newline Department of Mathematical Sciences \newline Korea Advanced Institute of Science and Technology, Daejeon  34141, Republic of Korea}
\email{sungho\_han@kaist.ac.kr}
\author[Kim]{Jeongho Kim}
\address[Jeongho Kim]{\newline Department of Applied Mathematics, \newline Kyung Hee University, 1732, Deogyeong-daero, Giheung-gu, Yongin-si, Gyeonggi-do 17104, Republic of Korea}
\email{jeonghokim@khu.ac.kr}
\begin{document}
%%%%%%%%%%%%%%%%%%%%%%%%%%%
\newtheorem{theorem}{Theorem}[section]
\newtheorem{lemma}{Lemma}[section]
\newtheorem{corollary}{Corollary}[section]
\newtheorem{proposition}{Proposition}[section]
\newtheorem{remark}{Remark}[section]
\newtheorem{definition}{Definition}[section]
%%%%%%%%%%%%%%%%%%%%%%%%%%%
\renewcommand{\theequation}{\thesection.\arabic{equation}}
\renewcommand{\thetheorem}{\thesection.\arabic{theorem}}
\renewcommand{\thelemma}{\thesection.\arabic{lemma}}
%%%%%%%%%%%%%%%%%%%%%%%%%%%
\newcommand{\bbr}{\mathbb R}
\newcommand{\R}{\mathbb{R}}
\newcommand{\e}{\varepsilon}
\newcommand{\pa}{\partial}
%%%%%%%%%%%%%%%%%%%%%%%%%%%
\newcommand{\oU}{\overline{U}}
\newcommand{\ou}{\overline{u}}
\newcommand{\ov}{\overline{v}}
\newcommand{\ow}{\overline{w}}
\newcommand{\ur}{u^R}
\newcommand{\us}{u^S}
\newcommand{\vr}{v^R}
\newcommand{\vs}{v^S}
\newcommand{\rw}{w^R}
\newcommand{\ws}{w^S}
\newcommand{\m}{\overline{\mu}}
\newcommand{\mr}{\mu^R}
\newcommand{\ms}{\mu^S}
\newcommand{\ok}{\overline{\kappa}}
\newcommand{\kr}{\kappa^R}
\newcommand{\ks}{\kappa^S}
%%%%%%%%%%%%%%%%%%%%%%%%%%%
\newcommand{\norm}[1]{\left\lVert#1\right\rVert}
%%%%%%%%%%%%%%%%%%%%%%%%%%%

\subjclass[2020]{35Q35, 76N06} 

\keywords{$a$-contraction with shift; asymptotic behavior; Navier--Stokes--Korteweg equations; rarefaction wave;  viscous-dispersive shock wave}

\thanks{S. Han was partially supported by the National Research Foundationof Korea (NRF-2019R1C1C1009355 and NRF-2019R1A5A1028324). J. Kim was supported by Samsung Science and Technology Foundation under Project Number SSTF-BA2401-01.}

\begin{abstract} 
	We study the asymptotic stability of a composition of rarefaction and shock waves for the one-dimensional barotropic compressible fluid of Korteweg type, called the Navier-Stokes-Korteweg(NSK) system. Precisely, we show that the solution to the NSK system asymptotically converges to the composition of the rarefaction wave and shifted viscous-dispersive shock wave, under certain smallness assumption on the initial perturbation and strength of the waves. Our method is based on the method of $a$-contraction with shift developed by Kang and Vasseur \cite{KV16}, successfully applied to obtain contraction or stability of nonlinear waves for hyperbolic systems.
\end{abstract}

\maketitle

\section{Introduction}\label{sec:1}
\setcounter{equation}{0}

A theory on fluid dynamics with internal capillarity originates from the works of Van der Waals \cite{V94} and Korteweg \cite{K01}, who introduced stress tensors involving higher-order derivatives to model capillarity effects. Later, Dunn and Serrin \cite{DS85} rigorously derived the governing equations for the fluid with capillarity, known as the Navier-Stokes-Korteweg(NSK) equations, from the physical principles of thermodynamics. The NSK equations have been applied in various fields, including quantum fluid dynamics and two-phase flow modeling \cite{AS22, BGV19, BVY21}. 

In the present paper, we study the one-dimensional barotropic compressible NSK equations in terms of the Lagrangian mass coordinates:
\begin{equation}\label{eq:NSK}
\begin{aligned}
&v_t-u_x=0,\quad (t,x)\in \bbr_+\times\bbr,\\
&u_t+p(v)_x=\left(\mu(v)\frac{u_x}{v}\right)_x + \left( \kappa(v) \left(  -\frac{v_{xx}}{v^5} + \frac{5v_x^2}{2v^6}\right)-\frac{1}{2} \kappa'(v) \frac{(v_x)^2}{v^5} \right)_x,
\end{aligned}
\end{equation}
 supplemented with the initial data
 $(v,u)(0,x)=(v_0,u_0)(x)$
 and the far-field states 
\begin{equation}\label{end states}
\lim_{x \to \pm \infty} (v_0(x),u_0(x))=(v_\pm,u_\pm).
\end{equation}
Here,  $v=v(t,x)$ and $u=u(t,x)$ represent the specific volume and velocity of the fluid, respectively. The pressure follows $\gamma$-law $p(v)=v^{-\gamma}$ with $\gamma>1$ and the coefficient $\mu=\mu(v)>0$ and $\kappa=\kappa(v)>0$ represent the viscosity and capillarity of the fluid. In particular, when the capillary effect is neglected, that is $\kappa\equiv0$, the NSK system \eqref{eq:NSK} is reduced to the standard barotropic compressible Navier-Stokes(NS) equations:
\begin{equation}
\begin{aligned}\label{eq:NS}
&v_t-u_x=0,\quad (t,x)\in\bbr_+\times\bbr,\\
&u_t +p(v)_x=\left(\mu(v)\frac{u_x}{v}\right)_x.
\end{aligned}
\end{equation}

The goal of the present paper is to investigate the time-asymptotic behavior of solutions to \eqref{eq:NSK}--\eqref{end states}. Similar to the case of the NS equations \eqref{eq:NS}, the asymptotic patterns of the solutions to the NSK equations \eqref{eq:NSK} are determined by the Riemann problem for the corresponding Euler equations:
\begin{equation}\label{eq:Euler}
\begin{aligned}
&v_t-u_x=0,\quad (t,x)\in\bbr_+\times\bbr,\\
&u_t+p(v)_x=0,
\end{aligned}
\end{equation}
subject to the Riemann initial data
\begin{equation}\label{Riemann-data}
(v(0,x),u(0,x))=
\begin{cases} 
(v_-,u_-), &x<0,\\
(v_+,u_+), &x>0.
\end{cases}
\end{equation}
The Riemann solutions of \eqref{eq:Euler} consist of the rarefaction wave, shock wave, or their compositions. We focus on the case of the composite wave consisting of the 1-rarefaction wave and the 2-shock wave, where there exists an intermediate state $(v_m,u_m)$ so that $(v_-,u_-)$ and $(v_+,u_+)$ are connected to $(v_m,u_m)$ via 1-rarefaction curve and 2-shock curve respectively as below. %This configuration involves intricate interactions between rarefaction and shock waves, providing significant mathematical insights.
%We assume that $(v_-,u_-) \in R_1S_2(v_+,u_+)$, which means that there exists a unique intermediate state $(v_m,u_m)$ such that the left-end state $(v_-,u_-)$ satisfies $(v_-,u_-)$ is connected to the intermediate state $(v_m,u_m)$ via the 1-rarefaction wave $R_1(v_m,u_m)$, while the intermediate state $( v_m,u_m)$ is connected to the right-end state $(v_+,u_+)$ through the $2$-shock wave $S_2(v_+,u_+)$.

The 1-rarefaction curve $ R_1(v_m,u_m) $ is defined as an integral curve of the first characteristic field associated with $ \lambda_1(v) = -\sqrt{-p'(v)} $ for the hyperbolic part of \eqref{eq:Euler}:
\[
R_1(v_m,u_m) := \left\{ (v, u) \;\middle|\; u = u_m - \int_{v_m}^v \lambda_1(s) \, ds, \quad v < v_m \right\}.
\]
For $ (v_-,u_-) \in R_1(v_m,u_m)$, the $ 1 $-rarefaction wave $ (v^r, u^r)(x/t) $ is a self-similar solution to the Riemann problem \eqref{eq:Euler}--\eqref{Riemann-data} given as
\begin{equation}\label{eq:rarefaction wave}
(v^r, u^r)(x/t) =
\begin{cases}
(v_-,u_-), & x < \lambda_1(v_-)t, \\ 
\begin{cases}
v^r = \lambda_1^{-1}(x/t), \\ 
u^r = u_m - \int_{v_m}^{v_1^r} \lambda_1(s) \, ds
\end{cases}
& \lambda_1(v_-)t \leq x \leq \lambda_1(v_m)t, \\ 
(v_m,u_m), & x > \lambda_1(v_m)t,
\end{cases}
\end{equation}
together with the 1-Riemann invariant
\begin{equation*}
     z_1((v^r, u^r)(x/t))=z_1(v_-,u_-)=z_1(v_m,u_m), \quad z_1(v,u):=u+\int^v\lambda_1(s)\,ds.
\end{equation*}
On the other hand, the 2-shock curve $ S_2(v_+,u_+) $ is defined as a Hugoniot curve corresponding to the second characteristic field associated with $\lambda_2(v)=\sqrt{-p'(v)}$:
\[
S_2(v_+,u_+) := \left\{ (v, u) ~\middle|~ u = u_+ + \sigma (v_+ - v), \; v < v_+ \right\},\quad \sigma = \sqrt{-\frac{p(v) - p(v_+)}{v - v_+}} .
\]
For any $ (v_m,u_m) \in S_2(v_+,u_+) $, the Riemann solution with the far-field states $(v_m,u_m)$ and $(v_+,u_+)$ is given as the 2-shock wave with the shock speed $\sigma := \sqrt{-\frac{p(v_m)-p(v_+)}{v_m-v_+}}$:
\begin{equation} \label{eq:shock wave}
(v^s, u^s)(x, t) =
\begin{cases}
(v_m,u_m), & x < \sigma t, \\ 
(v_+,u_+), & x > \sigma t.
\end{cases}
\end{equation}
Therefore, for any $(v_-,u_-) \in R_1S_2(v_+,u_+)$, there exists a unique intermediate state $(v_m,u_m)\in \mathbb{R}_+ \times \mathbb{R}$ such that $(v_-,u_-) \in R_1(v_m,u_m)$, and $(v_m,u_m) \in S_2(u_+,v_+)$. In this case, the Riemann solution $(\tilde{v},\tilde{u})$ is a composite wave consisting of a 1-rarefaction wave $(v^r,u^r)$ and a 2-shock wave $(v^s,u^s)$:
\[(\tilde{v},\tilde{u})(t,x)=(v^r,u^r)(x/t)+(v^s,u^s)(x-\sigma t)-(v_m,u_m).\]

\subsection{Related works} 
Asymptotic behaviors of the solutions to compressible fluid models, especially the NS equations \eqref{eq:NS}, have been extensively investigated in previous literature. For the NS equations, the viscous counterpart to the shock wave \eqref{eq:shock wave}, called a viscous shock, should be introduced. The viscous shock is a traveling wave solution $(v^S(\xi),u^S(\xi))$ to \eqref{eq:NS} where $\xi=x-\sigma t$, whose profile satisfies the following system of ODEs:

\begin{equation*} %\label{viscous-shock}
\begin{cases}
-\sigma \vs_\xi-\us_\xi=0,\\
-\sigma \us_\xi+p(\vs)_\xi=\left(\mu(\vs)\frac{\us_\xi}{v^S}\right)_\xi,
\end{cases}
\end{equation*}
subject to the boundary conditions $(\vs,\us)(\pm\infty)=(v_\pm,u_\pm)$. A study on the asymptotic behavior of the NS equations was initiated by Matsumura and Nishihara \cite{MN85} where they proved an asymptotic stability of the viscous shock wave by introducing the anti-derivative variables under the zero-mass condition. Goodman \cite{G86} proved a similar stability result of the viscous shock wave for a system with artificial diffusion under similar conditions. Later on, the zero-mass condition is alleviated by introducing the constant shift on the viscous shock \cite{L85} and the diffusion waves \cite{SX93}. Masica and Zumbrun \cite{MZ04} also investigated the stability of the viscous shock wave based on the spectral stability. On the other hand, the stability of the rarefaction waves and their composition for the NS equations was proved in \cite{MN86,MN92}, based on the standard energy method.

However, the stability of the composition of two viscous shock waves and the composition of the rarefaction and the shock waves remained as open problems until very recently. The stabilities of these cases are proved in \cite{HKK23} and \cite{KVW23} respectively, by using the method of $a$-contraction with shift developed by Kang and Vasseur \cite{KV16}. This method works on the physical variable, not the anti-derivative variable, and therefore it can naturally integrate the stability analysis of the rarefaction and shock waves. We refer to \cite{Kang-V-1,KV21,KV-Inven,KVW23} for the intuition and further discussion on the method of $a$-contraction with shifts.

We now turn our focus on the asymptotic behavior of the NSK equations \eqref{eq:NSK}. For the case of NSK equations \eqref{eq:NSK}, the counterpart of the shock wave is a viscous-dispersive (or dissipative-dispersive) shock wave, which is a traveling wave $(\vs,\us)(\xi)$ satisfying the following ODEs:
\begin{equation}\label{viscous-dispersive-shock}
\begin{cases}
-\sigma \vs_\xi-\us_\xi=0,\\
-\sigma \us_\xi+p(\vs)_\xi=\left(\mu(\vs)\frac{\us_\xi}{v^S}\right)_\xi + \left( \kappa(\vs) \left(  -\frac{\vs_{\xi\xi}}{(\vs)^5} + \frac{5(\vs_\xi)^2}{2(\vs)^6}\right)-\frac{1}{2} \kappa'(\vs) \frac{(\vs_\xi)^2}{(\vs)^5} \right)_\xi,
\end{cases}
\end{equation}
subject to the boundary conditions $(\vs,\us)(\pm\infty)=(v_\pm,u_\pm)$. The existence of the traveling wave solution \eqref{viscous-dispersive-shock} and its stability was studied in \cite{CHZ15}, where they relied on the traditional anti-derivative method under the zero-mass condition. In \cite{FPZ23}, the spectral stability of the viscous-dispersive shock waves for the quantum NS equations, a special case of the NSK equations, was established. Recently, the authors applied the method of $a$-contraction with shift to show the stability of the viscous-dispersive shock wave, removing the zero-mass constraint on the initial data. On the other hand, the stability of the rarefaction waves for the NSK equations was proved in \cite{C12} using the standard energy method.

\subsection{Main Theorem}
Therefore, it is natural to ask whether the stability of the composition of the rarefaction and shock waves can be obtained for the case of the NSK equations, and our goal of the present work is to answer this question. We present the main theorem as follows, which states the stability of the composition waves of the rarefaction and the viscous-dispersive shock wave.
    
\begin{theorem}\label{thm:main}
    For a given right-end state $(v_+,u_+)\in \mathbb{R}_+\times\mathbb{R}$, there exist positive constants $\delta_0$ and $\varepsilon_0$ such that the following statements hold.
        
   For any $(v_m,u_m) \in S_2(v_+,u_+)$ and $(v_-,u_-) \in R_1(v_m,u_m)$ such that 
   \[|(v_+,u_+)-(v_m-u_m)|+|(v_m-u_m)-(v_-,u_-)|<\delta_0,\]
   let $(v^r,u^r)(\frac{x}{t})$ be the 1-rarefaction \eqref{eq:rarefaction wave} with end states $(v_-,u_-)$ and $(v_m,u_m)$, and  $(\vs,\us)(x-\sigma t)$ be the 2-viscous-dispersive shock wave \eqref{viscous-dispersive-shock} with the end states $(v_m,u_m)$ and $(v_+,u_+)$. Let $(v_0,u_0)$ be any initial data such that
   \[\sum_{\pm}\left\|(v_0-v_{\pm},u_0-u_{\pm})\right\|_{L^2(\R_\pm)}+\|v_{0x}\|_{H^1(\R)}+\|u_{0x}\|_{L^2(\R)}<\e_0,\]
   where $\R_-:=-\R_+=(-\infty,0)$. Then, the Navier--Stokes--Korteweg system \eqref{eq:NSK} admits a unique global-in-time solution $(v,u)$. Moreover, there exists a Lipschitz continuous shift function $X:[0,\infty)\to \bbr$ such that $X(0)=0$ and 
   \begin{equation*}
   \begin{aligned}
      &v(t,x)-\left(v^r(x/t) + \vs(x-\sigma t -X(t)) -v_m \right)\in C(0,\infty;H^2(\R)),\\
      &u(t,x)-\left(u^r(x/t) + \us(x-\sigma t -X(t)) -v_m \right)\in C(0,\infty;H^1(\R))\cap L^2(0,\infty;H^2(\R)).
   \end{aligned}
   \end{equation*}
   In addition, we have
   \begin{equation*}
   \begin{aligned}
   &\lim_{t\to\infty}\|v(t,\cdot)-\left(v^r(\cdot/t)+v^S(\cdot-\sigma t-X(t))-v_m\right)\|_{W^{1,\infty}(\R)}=0,\\
   &\lim_{t\to\infty}\|u(t,\cdot)-\left(u^r(\cdot/t)+u^S(\cdot-\sigma t-X(t))-u_m\right)\|_{L^{\infty}(\R)}=0,
   \end{aligned}
   \end{equation*}
	and
	\begin{equation}\label{xlimit}
		\lim_{t\to\infty} |\dot{X}(t)|=0.
	\end{equation}
\end{theorem}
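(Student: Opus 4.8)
The plan is to carry over the $a$-contraction with shift framework of \cite{KV16}, in the form developed for the Navier--Stokes composite wave in \cite{KVW23}, and to absorb the third-order capillary term by means of the relative capillary energy and an effective-velocity (Bresch--Desjardins-type) reformulation, as in the authors' recent treatment of the single viscous--dispersive shock. \textbf{Step 1 (Approximate composite wave and error).} Since the rarefaction \eqref{eq:rarefaction wave} is only Lipschitz, I first replace $(v^r,u^r)$ by a smooth approximate rarefaction $(\bar v^r,\bar u^r)(t,x)$, built from a smoothed inviscid Burgers solution for the Riemann invariant $z_1$, with the standard bounds: $\|(\bar v^r-v^r,\bar u^r-u^r)(t,\cdot)\|_{L^p(\R)}\to 0$ as $t\to\infty$, $|\partial_x\bar v^r|+|\partial_x\bar u^r|\lesssim\min\{\delta_0,(1+t)^{-1}\}$, exponential decay of $|\partial_x^k\bar v^r|$ away from the rarefaction fan, and $|\partial_x^{k+1}\bar v^r|\lesssim|\partial_x\bar v^r|$. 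The ansatz is $\tilde U=(\tilde v,\tilde u):=(\bar v^r,\bar u^r)(t,x)+(v^S,u^S)(x-\sigma t-X(t))-(v_m,u_m)$ for a shift $X$ to be chosen. Substituting $\tilde U$ into \eqref{eq:NSK} produces an error $\mathcal E$ made of the approximate-rarefaction residual, the (vanishing) viscous--dispersive residual of the shock on the shifted argument, and the wave-interaction terms; since $\lambda_1(v_m)<0<\sigma$ for $\delta_0$ small, the rarefaction fan and the shock layer separate linearly in $t$, so the interaction part of $\mathcal E$ decays exponentially and $\|\mathcal E(t,\cdot)\|_{L^1\cap L^2(\R)}\in L^1(0,\infty)$ up to the nonlocal rarefaction piece, which is handled by its own decay.

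\textbf{Step 2 (Weighted relative entropy, weight and shift).} Writing $(\phi,\psi):=(v-\tilde v,\,u-\tilde u)$, I work with the functional $\int_\R a\big(x-\sigma t-X(t)\big)\big[\,\eta(U\,|\,\tilde U)+\mathcal K(v,v_x\,|\,\tilde v,\tilde v_x)\,\big]\,dx$, where $\eta(U\,|\,\tilde U)=\tfrac12\psi^2+Q(v\,|\,\tilde v)$ is the mechanical relative entropy and $\mathcal K$ is the relative capillary energy, quadratic in $\phi_x$ to leading order. The weight $a=a(\xi)$ is chosen monotone, with distinct constant limits $a(\pm\infty)$ and jump comparable to the shock strength, so that the $a$-contraction produces a strictly negative ``good'' quadratic term concentrated on the shock layer. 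The shift solves an ODE of the form $\dot X(t)\cdot\big(\text{nondegenerate coefficient}\big)=-\big(\text{the linear-in-}(\phi,\psi)\text{ contributions along the shock}\big)$; the coefficient stays bounded away from zero for $\delta_0$ small, and a fixed-point argument in $\dot X$, whose right-hand side is Lipschitz in the solution, gives a Lipschitz $X$ with $X(0)=0$.

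\textbf{Step 3 (A priori estimates).} The core is the zeroth-order estimate: differentiating the functional, the shift contributes $-c|\dot X(t)|^2$, the $a$-contraction contributes the good shock term, and the rarefaction contributes a term $-c\int|\partial_x\bar u^r|\psi^2\,dx$. Against these one absorbs (i) the ``bad'' quadratic terms of the $a$-contraction, via the weight together with a Poincar\'e-type inequality on the shock layer as in \cite{KV16,KVW23}; (ii) the error $\mathcal E$, via its $L^1_t$ integrability and smallness; and (iii) the dispersive cross-terms, by integrating by parts to place derivatives on the smooth profiles, using $\mathcal K$ in the functional to recover $\|\phi_x(t)\|_{L^2}$ together with the algebraic cancellations of the Korteweg stress --- the part with no dissipative help. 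One then closes with a first-order estimate for $(\phi_x,\psi_x)$ and a second-order estimate for $\phi_{xx}$; for the latter an effective-velocity reformulation $w=u+\partial_x\varphi(v)$ for a suitable $\varphi$ exposes a parabolic structure on $v$, upgrading the control of $\phi_x$ to $\int_0^\infty\|\phi_{xx}(t)\|_{L^2}^2\,dt<\infty$. Summing the estimates in the right order and using $\delta_0,\varepsilon_0$ small yields the uniform bound $\|(\phi,\psi)(t)\|_{H^2(\R)\times H^1(\R)}^2+\int_0^t(\text{full dissipation})\,ds\le C\varepsilon_0^2$.

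\textbf{Step 4 (Global existence, asymptotics and main obstacle).} Combining this a priori bound with local well-posedness for \eqref{eq:NSK} in $H^2\times H^1$ and a continuation argument gives the global solution in the stated classes. For the asymptotics, the uniform bound and the finiteness of $\int_0^\infty(\|\phi_x\|_{H^1}^2+\|\psi_x\|_{L^2}^2+\text{shock/rarefaction dissipation}+|\dot X|^2)\,dt$, together with boundedness of the time-derivatives of these $L^2$ quantities obtained from the equations, force $\|(\phi_x,\psi_x)(t)\|_{L^2}\to0$; Gagliardo--Nirenberg then gives $\|(\phi,\psi)(t)\|_{L^\infty}\to0$ and $\|\phi_x(t)\|_{L^\infty}\to0$, and combined with $\|(\bar v^r-v^r,\bar u^r-u^r)(t)\|_{L^\infty}\to0$ this is the claimed $W^{1,\infty}/L^\infty$ convergence, while \eqref{xlimit} follows by feeding the decay of $(\phi,\psi)$ on the shock layer back into the shift ODE. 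The main obstacle, and the genuinely new point relative to \cite{KVW23}, is item (iii): the third-order capillary term carries no dissipation, so its contribution to the relative-entropy estimate must be controlled entirely by the relative capillary energy and the cancellation structure of the Korteweg stress, all while the $a$-contraction weight and the shift are simultaneously active and every constant must remain inside the final smallness budget; securing the $\phi_{xx}$ estimate through the effective-velocity structure, and checking its compatibility with the shift and with the rarefaction error terms, is where most of the technical work lies.
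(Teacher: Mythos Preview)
Your proposal is essentially correct and tracks the paper's strategy closely: smooth approximate rarefaction, composite ansatz with shifted viscous--dispersive shock, weighted relative entropy with shift chosen via an ODE, Poincar\'e-type inequality on the shock layer for the leading-order estimate, hierarchy of higher-order estimates, continuation for global existence, and $W^{1,1}$-in-time control of $\|(\phi_x,\psi_x)\|_{L^2}^2$ for the asymptotics.

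The one substantive difference is in how the capillary structure is packaged. The paper does \emph{not} keep $\mathcal K(v,v_x\,|\,\tilde v,\tilde v_x)$ as a separate piece of the functional and then invoke a Bresch--Desjardins effective velocity $w=u+\partial_x\varphi(v)$ for the $\phi_{xx}$ estimate. Instead it introduces the auxiliary variable $w:=-\sqrt{\kappa(v)}\,v_x/v^{5/2}$ and rewrites \eqref{eq:NSK} as the augmented three-component system \eqref{NSK-w}, for which the natural entropy is $\tfrac12 u^2+Q(v)+\tfrac12 w^2$; the relative capillary energy is then exactly $\tfrac12|w-\bar w|^2$ and the whole zeroth-order computation becomes the standard relative-entropy identity for a hyperbolic system with (degenerate, antisymmetric-plus-dissipative) $M(U)$. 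The higher-order estimates are then obtained by multiplying the $(\psi,\omega)$ equations by $-\psi_{xx}$, $\omega$, $-\omega_{xx}$, $\omega_x$ (Lemmas \ref{lem: est-0}--\ref{lem: est-2}), rather than via an effective-velocity parabolic equation for $v$. Both routes encode the same cancellation, but the augmented-system formulation makes the antisymmetric capillary cross terms cancel transparently in the $M(U)$ matrix and avoids having to check compatibility of a BD variable with the shift and the rarefaction error; your effective-velocity route would also close but with different bookkeeping.

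Two minor points to align with the paper's scaling. First, the weight jump is $\sqrt{\delta_S}$, not $\delta_S$: the paper takes $a=1+(u_m-u^S)/\sqrt{\delta_S}$, and this specific scaling is what makes the sharp Poincar\'e constant $\tfrac12$ suffice when balancing $B_1-\mathcal G_2$ against $\tfrac34\mathcal D$ in \eqref{R1}. Second, the shift ODE is written explicitly as $\dot X=-\tfrac{M}{\delta_S}(Y_1+Y_2)$ with $M$ a fixed $O(1)$ constant, so that $\dot X Y$ produces exactly $-\tfrac{\delta_S}{M}|\dot X|^2$ plus controllable remainders; your description of it as ``$\dot X\cdot(\text{nondegenerate coefficient})=\ldots$'' is morally the same but hides this scaling.
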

    
\begin{remark}
    (1) Since the convergence \eqref{xlimit} implies
    $$
    \lim_{t\rightarrow+\infty}\frac{X(t)}{t}=0,
    $$
    the shift function $X(t)$ grows at most sub-linearly as $t\to\infty$, which implies that the linear term $\sigma t$ dominates the shift $X(t)$. Thus, the shifted wave $(\vs,\us)(x-\sigma t-X(t))$ asymptotically tends to the original wave $(\vs,\us)(x-\sigma t)$.\\
    
    \noindent (2) Thanks to the $H^2$-regularity of $v$-perturbation, compared to the $H^1$-regularity of $u$, the convergence of $v(t,x)$ towards the composite wave can be obtained in $W^{1,\infty}$-norm, while the convergence of $u(t,x)$ towards the composite wave is only in $L^\infty$-norm.\\
    
    \noindent (3) The results of Theorem \ref{thm:main} include the cases of $\mu(v)=v^{-\alpha}$ and $\kappa(v)=v^{-\beta}$ for $\alpha, \beta\in\bbr$:
    \begin{equation}
    \begin{aligned}\label{eq:NSK-general}
        &v_t-u_x=0,\quad (t,x)\in\bbr_+\times\bbr,\\
        &u_t +p(v)_x=\left(\frac{u_x}{v^{\alpha+1}}\right)_x+\left(-\frac{v_{xx}}{v^{\beta+5}}+\frac{\beta+5}{2}\frac{v_x^2}{v^{\beta+6}}\right)_x,
    \end{aligned}
    \end{equation}
	which has been studied as a special case of the NSK equations \cite{CCDZ}. In particular, when $\beta=-1$, the system \eqref{eq:NSK-general} represents the one-dimensional quantum Navier--Stokes equations in the Lagrangian mass coordinate. 
\end{remark}

The rest part of the paper is organized as follows. In Section \ref{sec:prelim}, we review several preliminary items such as the properties of the relative functionals and the elementary waves, which will be used in the later analysis. Section \ref{sec:apriori} presents a priori estimates on the stability, from which we deduce the global-in-time existence and the asymptotic behavior of the solution. At the end of this section, we also introduce the notations that will be used in the following sections and appendices. In Section \ref{sec:rel_ent} and Section \ref{sec:high-order}, we provide a detailed proof of the a priori estimate using the method of $a$-contraction with shift and the estimate on the high-order perturbation. In Appendix \ref{sec:app-rarefaction-proof}--Appendix \ref{app:proof_high_order_3}, we provide lengthy proofs of several lemmas.

\section{Preliminaries}\label{sec:prelim}
    \setcounter{equation}{0}
    In this section, we present several basic estimates for the relative quantities of pressure and internal energy, discuss useful properties of elementary waves with a particular focus on decay estimates, and explain the construction of composite waves. We also introduce the augmented system and provide detailed discussions on its formulation. %Finally, we define several $O(1)$-constants and derive related estimates for these constants.

    \subsection{Estimates on the relative quantities}
   Let $F:\R_+\to \bbr$ be a differentiable function, and let $v,\overline{v}>0$. The relative functional $F(v|\overline{v})$ is defined as
    \[F(v|\overline{v}):=F(v)-F(\overline{v})-F'(\overline{v})(v-\overline{v}).\]
    When $F$ is convex, the relative quantity is non-negative and vanishes if and only if $v=\overline{v}$. We review several lower and upper bounds on the relative quantities for the pressure $p(v)=v^{-\gamma}$ and the internal energy $Q(v)=\frac{v^{1-\gamma}}{\gamma-1}$.
    
    \begin{lemma}[\cite{KV21}]\label{lem : Estimate-relative} Let $\gamma>1$ and $v_+$ be given constants. Then, there exist positive constants $C$ and $\delta_*$ such that the following assertions hold:
        \begin{enumerate}
            \item For any $v$ and $\bar{v}$ satisfying $0<\bar{v}<2v_+$ and $0<v<3v_+$,
            \begin{equation*}
            |v-\bar{v}|^2 \le C Q(v|\bar{v}), \quad |v-\bar{v}|^2 \le C p(v|\overline{v}).
            \end{equation*}
            \item For any $v,\bar{v}$ satisfying $v,\bar{v} > v_+ /2$,
            \begin{equation*}
            |p(v)-p(\bar{v})| \le C |v-\bar{v}|.
            \end{equation*}
            \item For any $0<\delta<\delta_*$ and any $(v,\bar{v}) \in \mathbb{R}^2_+$ satisfying $|p(v)-p(\bar{v})|<\delta$ and $|p(\bar{v})-p(v_+)| < \delta,$
            \begin{equation*}
            \begin{aligned}
            &p(v|\bar{v}) \le \left( \frac{\gamma +1 }{2 \gamma } \frac{1}{p(\bar{v})} +C \delta \right) |p(v)-p(\bar{v})|^2,\\ 
            &Q(v|\bar{v}) \ge \frac{p(\bar{v})^{- \frac{1}{\gamma}-1}}{2 \gamma}|p(v)-p(\bar{v})|^2- \frac{1+\gamma}{3\gamma^2}p(\bar{v})^{- \frac{1}{\gamma}-2}(p(v)-p(\bar{v}))^3,\\ 
            &Q(v|\bar{v}) \le \left( \frac{p(\bar{v})^{- \frac{1}{\gamma}-1}}{2 \gamma} +C \delta \right)|p(v)-p(\bar{v})|^2.
            \end{aligned}
            \end{equation*}
            
        \end{enumerate} 
    \end{lemma}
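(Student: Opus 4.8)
The plan is to reduce every inequality to Taylor's theorem, working with the pressure $p$ as the independent variable for the delicate part~(3).

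Parts (1) and (2) are elementary. Since $p(v)=v^{-\gamma}$ and $Q(v)=\frac{v^{1-\gamma}}{\gamma-1}$ are smooth and strictly convex on $\R_+$, with $p''(v)=\gamma(\gamma+1)v^{-\gamma-2}$ and $Q''(v)=\gamma v^{-\gamma-1}$, Taylor's formula with Lagrange remainder gives $F(v|\bar v)=\tfrac12 F''(\xi)(v-\bar v)^2$ for some $\xi$ between $v$ and $\bar v$. Under the constraints $0<\bar v<2v_+$ and $0<v<3v_+$ one has $0<\xi<3v_+$, hence $p''(\xi)\ge\gamma(\gamma+1)(3v_+)^{-\gamma-2}$ and $Q''(\xi)\ge\gamma(3v_+)^{-\gamma-1}$; these two lower bounds give the first assertion with an explicit $C$ depending only on $\gamma$ and $v_+$. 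For (2), the mean value theorem yields $|p(v)-p(\bar v)|=|p'(\xi)|\,|v-\bar v|$ with $\xi>v_+/2$, and $|p'(\xi)|=\gamma\xi^{-\gamma-1}\le\gamma(v_+/2)^{-\gamma-1}$.

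For (3), put $p:=p(v)$ and $\bar p:=p(\bar v)$; the hypotheses $|p-\bar p|<\delta$ and $|\bar p-p(v_+)|<\delta$ confine $p$ and $\bar p$ to the interval $(p(v_+)-2\delta,\,p(v_+)+2\delta)$, which for $\delta<\delta_*$ sufficiently small lies in a fixed compact subinterval of $\R_+$ bounded away from $0$ and $\infty$, on which the inverse map $v=v(p)=p^{-1/\gamma}$ and all of its derivatives are bounded and Lipschitz with constants depending only on $\gamma$ and $v_+$. Regard $p(v|\bar v)$ and $Q(v|\bar v)$ as functions of $p$ with parameter $\bar p$. Using $p(v(p))=p$, $p'(\bar v)=-\gamma\bar p^{1+1/\gamma}$, $Q'(\bar v)=-p(\bar v)=-\bar p$ and $v'(\bar p)=-\tfrac1\gamma\bar p^{-1/\gamma-1}=1/p'(\bar v)$, one verifies that each of these two functions, together with its first $p$-derivative, vanishes at $p=\bar p$; this is exactly the defining property of the relative functional, and it forces the leading term of each expansion to be quadratic in $p-\bar p$. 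A short computation gives
\[
\partial_p^2\bigl[p(v|\bar v)\bigr]\big|_{p=\bar p}=-p'(\bar v)\,v''(\bar p)=\frac{\gamma+1}{\gamma}\,\bar p^{-1},\qquad
\partial_p^2\bigl[Q(v|\bar v)\bigr]\big|_{p=\bar p}=-v'(\bar p)=\frac1\gamma\,\bar p^{-1/\gamma-1},
\]
and $\partial_p^3\bigl[Q(v|\bar v)\bigr]\big|_{p=\bar p}=-2v''(\bar p)=-\tfrac{2(\gamma+1)}{\gamma^2}\bar p^{-1/\gamma-2}$. The upper bounds for $p(v|\bar v)$ and $Q(v|\bar v)$ then follow from the second-order Taylor expansion with Lagrange remainder: writing the remainder as $\tfrac12\bigl(\partial_p^2[\,\cdot\,](\xi)-\partial_p^2[\,\cdot\,](\bar p)\bigr)(p-\bar p)^2$ and using Lipschitz continuity of $\partial_p^2[\,\cdot\,]$ on the compact $p$-range shows it is at most $C\delta\,|p-\bar p|^2$, which gives the stated coefficients $\frac{\gamma+1}{2\gamma}\frac1{p(\bar v)}+C\delta$ and $\frac{p(\bar v)^{-1/\gamma-1}}{2\gamma}+C\delta$. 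For the lower bound on $Q(v|\bar v)$ one uses instead the third-order Taylor expansion $Q(v|\bar v)=\tfrac12\partial_p^2[Q]\big|_{\bar p}(p-\bar p)^2+\tfrac16\partial_p^3[Q]\big|_{\bar p}(p-\bar p)^3+\tfrac1{24}\partial_p^4[Q](\xi)(p-\bar p)^4$: the first two terms are exactly $\frac{p(\bar v)^{-1/\gamma-1}}{2\gamma}(p-\bar p)^2-\frac{1+\gamma}{3\gamma^2}p(\bar v)^{-1/\gamma-2}(p-\bar p)^3$, and the quartic remainder is nonnegative since $\partial_p^4[Q]=-3v'''+(\bar p-p)v''''$ is strictly positive on the compact $p$-range once $\delta$ is small (the principal term $-3v'''(p)=3\frac{(\gamma+1)(2\gamma+1)}{\gamma^3}p^{-1/\gamma-3}$ is bounded below there, the remaining term being $O(\delta)$).

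The only step requiring genuine care is the uniformity of the bookkeeping in (3): one must check that the vanishing of the first derivatives at $p=\bar p$ is identical, not merely approximate, and --- more importantly --- that every $O(\delta)$ error term is controlled uniformly over the admissible set of pairs $(v,\bar v)$. This is exactly what the two smallness hypotheses on $p(v)$ and $p(\bar v)$ provide: they pin $p$ and $\bar p$ inside a fixed compact subset of $\R_+$ away from $0$ and $\infty$, where $v(p)$ together with all the derivatives used above is Lipschitz with constants depending only on $\gamma$ and $v_+$. Smallness of $\delta$, as opposed to mere boundedness, enters in exactly one place: to guarantee the positivity of $\partial_p^4[Q]$ on the compact $p$-range, which is what makes the quartic remainder in the lower bound for $Q(v|\bar v)$ have the correct sign.
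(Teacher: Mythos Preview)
Your proof is correct. The paper does not prove this lemma but merely cites it from \cite{KV21}; your argument via Taylor expansion in the pressure variable is the standard route and matches what one finds in that reference.
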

    
    \subsection{Rarefaction wave and smooth approximation} 

    Since the rarefaction wave $(v^r,u^r)$ is merely Lipschitz continuous, it is convenient to use its smooth approximation in the later analysis. Consider the smooth solution $w(t,x)$ to the Burgers equation:
    \begin{equation*} %\label{inviscid Bugers}
        \begin{aligned}
            &w_t+w w_x=0,\\
            &w(0,x)=w_0(x)=\frac{w_m+w_-}{2}+\frac{w_m-w_-}{2}\tanh x,
        \end{aligned}
    \end{equation*}
    where $w_-:=\lambda_1(v_-)$ and $w_m:=\lambda_1(v_m)$ where $\lambda_1(v)=-\sqrt{-p'(v)}$. Then, we define a smooth approximation $(\vr,\ur)(t,x)$ of the 1-rarefaction wave $(v^r,u^r)$ as
    \begin{equation*} %\label{smooth rarefaction}
        \begin{aligned}
            &\vr(t,x)=\lambda_1^{-1}(w(1+t,x)),\\
            &z_1(\vr,\ur)(t,x)=z_1(v_-,u_-)=z_1(v_m,u_m),
        \end{aligned}
    \end{equation*}
    Then, the smooth approximate rarefaction wave $(\vr, \ur)$ satisfies the Euler equations:
    \begin{equation*} %\label{eq: smooth rarefaction}
        \begin{aligned}
        &v^R_t-u^R_x=0, \\
        &u^R_t+p(\vr)_x=0.
        \end{aligned}
    \end{equation*}
    In the following lemma, we review the properties of the smooth rarefaction wave $(\vr,\ur)$, which are mainly derived in \cite{MN86}.
    
    \begin{lemma}\label{lem:rarefaction_property} Let $\delta_R :=|u_m-u_-| \sim |v_m-v_-|$. Then, the smooth approximate 1-rarefaction wave $(\vr, \ur)(t, x)$ satisfies the following properties. 
        \begin{enumerate}
            \item $\ur_x=\frac{2\vr}{\gamma+1}w_x>0$ and $\vr_x=\frac{(\vr)^{\frac{\gamma+1}{2}}}{\sqrt{\gamma}}u^R_x>0$, $\forall x \in \mathbb{R}$ and $t \ge 0.$
            \item For any $p \in [1,+\infty]$, there exists a positive constant $C$ such that, for all $t\ge0$,
            \begin{align*}
            &\|(v^R_x,u^R_x)\|_{L^p(\mathbb{R})} \le C \min \left\{\delta_R, \frac{\delta_R^{1/p}}{(1+t)^{1-1/p}} \right\}, \\
            &\|(v^R_{xx},u^R_{xx})\|_{L^p(\mathbb{R})} \le C \min \left\{\delta_R, \frac{1}{(1+t)} \right\}, \\
            &\|\pa_x^j(v^R,u^R)\|_{L^p(\mathbb{R})} \le C \min \left\{\delta_R, \frac{\delta_R^{1/p}}{(1+t)^{1-1/p}} ,\frac{1}{1+t}+\frac{\delta_R^{1/p-1}}{(1+t)^{2-1/p}}\right\}, \quad j = 3,4,\\
            &|u^R_{xx}| \le C |u^R_x|, \quad \forall x \in \mathbb{R}.
            \end{align*}
            \item For $x \ge \lambda_{1}(v_m)(1+t)$ and for all $t\ge0$,
            \begin{align*}
            &|(\vr,\ur)(t,x) -(v_m,u_m)| \le C \delta_R e^{-2|x-\lambda_{1}(v_m)(1+t)|} , \\
            &|(v^R_{x},u^R_{x})(t,x)| \le C \delta_R e^{-2|x-\lambda_{1}(v_m)(1+t)|}.
            \end{align*}
            \item For $x \le \lambda_{1}(v_-)t$ and for all $t\ge0$, 
            \begin{align*}
            &|(\vr,\ur)(t,x) -(v_-,u_-)| \le C \delta_R e^{-2|x-\lambda_{1}(v_-)t|} , \\
            &|(v^R_{x},u^R_{x})(t,x)| \le C \delta_R e^{-2|x-\lambda_{1}(v_-)t|}.
            \end{align*}
            \item $\displaystyle{\lim_{t \to +\infty}\sup_{x \in \mathbb{R}}}|(\vr,\ur)(t,x)-(v^r,u^r)(x/t)|=0$.
        \end{enumerate}
    \end{lemma}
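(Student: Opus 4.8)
The plan is to reduce every assertion to a quantitative statement about the auxiliary Burgers rarefaction wave $w$, since $(\vr,\ur)$ is built from $w$ through the smooth, strictly increasing change of unknowns $\vr=\lambda_1^{-1}(w(1+t,\cdot))$ together with the constancy of the $1$-Riemann invariant $z_1$. First I would record that for $p(v)=v^{-\gamma}$ one has $\lambda_1(v)=-\sqrt{\gamma}\,v^{-(\gamma+1)/2}$, which is strictly increasing, so $w_-=\lambda_1(v_-)<\lambda_1(v_m)=w_m$ and $w_0$ is smooth with $w_0'>0$; hence the characteristic map $x_0\mapsto x_0+tw_0(x_0)$ is a global diffeomorphism of $\bbr$ for each $t\ge0$ and produces a unique global smooth $w$ with $w_x(t,x)=w_0'(x_0)/(1+tw_0'(x_0))>0$ along $x=x_0+tw_0(x_0)$. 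Differentiating $z_1(\vr,\ur)\equiv z_1(v_-,u_-)$ in $x$ gives $\ur_x=-\lambda_1(\vr)\vr_x$, while $\vr=\lambda_1^{-1}(w(1+t,\cdot))$ gives $\vr_x=w_x/\lambda_1'(\vr)$; substituting $\lambda_1'(v)=\tfrac{\sqrt\gamma(\gamma+1)}{2}v^{-(\gamma+3)/2}$ yields precisely the two identities in item (1), whose positivity is immediate from $w_x>0$. I would also note at this stage that $\vr$ takes values in the fixed compact interval $[v_-,v_m]$, so every coefficient of the form $\phi(\vr)$ met below is smooth and bounded above and below uniformly in $(t,x)$, and that the verification that $(\vr,\ur)$ solves the Euler system is a one-line computation using $w_t=-ww_x$.

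Next I would prove item (2). The corresponding bounds for $w$ are classical (Matsumura--Nishihara \cite{MN86}): from the characteristic formula one reads off $0<w_x\le\min\{C\delta_R,\,C/(1+t)\}$ together with $\|w_x(t)\|_{L^1}=w_m-w_-\sim\delta_R$, which interpolate to the stated $L^p$ estimate, while iterating the transport identities $\tfrac{d}{dt}w_{xx}=-3w_xw_{xx}$ (and the analogous ODEs for $w_{xxx},w_{xxxx}$) along characteristics, combined with $|w_0''|\le2|w_0'|$ (since $w_0''=-2w_0'\tanh(\cdot)$), yields the pointwise bound $|w_{xx}|\le2|w_x|$ and the higher-order $L^p$ estimates. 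The bounds for $(\vr,\ur)$ then follow by the chain rule, because $\pa_x^k\vr$ and $\pa_x^k\ur$ are schematically sums of products of bounded functions of $\vr$ with $\pa_x^j w$, $1\le j\le k$; in particular $|\pa_x^2(\vr,\ur)|\lesssim|w_{xx}|+|w_x|^2$, and from $\ur_{xx}=\tfrac{\vr_x}{\vr}\ur_x+\tfrac{2\vr}{\gamma+1}w_{xx}$ together with $|\vr_x|\lesssim\delta_R$, $|w_{xx}|\le2|w_x|\lesssim|\ur_x|$ and the smallness of $\delta_R$ one obtains $|\ur_{xx}|\le C|\ur_x|$.

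For items (3)--(4) I would exploit the exponential tails of the datum: $w_0(x)-w_-\sim\delta_R e^{2x}$ as $x\to-\infty$ and $w_m-w_0(x)\sim\delta_R e^{-2x}$ as $x\to+\infty$, with the same bounds for $w_0'$. If $(t,x)$ lies to the right of the fan, i.e. $x\ge\lambda_1(v_m)(1+t)$, the characteristic foot obeys $x_0\ge x-(1+t)w_m\to+\infty$ as $|x-\lambda_1(v_m)(1+t)|\to\infty$, so the tail $w_m-w_0(x_0)$ transfers to $w_m-w(1+t,\cdot)$ with the claimed exponential rate, and likewise to $w_x(1+t,\cdot)$; passing through the Lipschitz map $\lambda_1^{-1}$ and the $z_1$-relation gives the bounds on $\vr-v_m$, $\ur-u_m$ and their $x$-derivatives, and the estimate to the left of the fan is symmetric (the constant slack between the thresholds $\lambda_1(v_-)t$ and $\lambda_1(v_-)(1+t)$ being harmless after enlarging $C$). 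Finally, item (5) is the classical uniform convergence of the Burgers rarefaction $w(1+t,\cdot)$ to the self-similar profile interpolating linearly between $w_-$ and $w_m$, composed with the continuous map $\lambda_1^{-1}$.

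Conceptually the lemma is light; the genuine labor lies in item (2) for the third and fourth derivatives and in tracking the triple-$\min$ bounds with constants independent of the wave strength $\delta_R$, which I would carry out exactly as in \cite{MN86}. Accordingly I expect the main obstacle to be bookkeeping rather than an analytic difficulty, the only model-specific ingredient being the elementary chain-rule reductions tied to $\lambda_1(v)=-\sqrt{-p'(v)}$ with the $\gamma$-law pressure, which reduce all derivative estimates for $(\vr,\ur)$ to those for the single scalar wave $w$.
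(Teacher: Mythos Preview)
Your approach is essentially the one the paper takes: reduce everything to the auxiliary Burgers wave $w$ via the characteristic formula $w(t,x)=w_0(x_0)$ with $x=x_0+tw_0(x_0)$, and then transfer all estimates to $(\vr,\ur)$ through the smooth map $\lambda_1^{-1}$ and the Riemann-invariant relation. Items (1), (3)--(5) and the first two lines of item (2) are indeed in \cite{MN86} and your sketch of them is correct.

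One point to flag: the third line of item (2), the triple-$\min$ bound for $j=3,4$, is \emph{not} in \cite{MN86}; this is precisely the new content the paper supplies in its appendix. So your remark that you would carry it out ``exactly as in \cite{MN86}'' understates what is needed. The paper's argument goes beyond the pointwise inequality $|\pa_x^j w_0|\le C|w_0'|$ you mention: to extract the third alternative $\tfrac{1}{1+t}+\tfrac{\delta_R^{1/p-1}}{(1+t)^{2-1/p}}$ one uses the sharper relation $|w_0'''(x_0)|\le \tfrac{4}{\delta_R}|w_0'(x_0)|^2+2|w_0''(x_0)|$, which splits $|w_{xxx}|$ into a piece controlled by $|w_{xx}|$ (giving the $\tfrac{1}{1+t}$ term) and a piece of the form $\delta_R^{-1}|w_0'|^2(1+tw_0')^{-4}$ whose $L^p$ norm is computed by the change of variables $z=\tfrac{2}{\delta_R t}w_0'(x_0)t$ on each half-line. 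Your ODE-along-characteristics formulation would lead to the same explicit formulas and hence to the same computation, so there is no obstruction in your plan; just be aware that this step requires a short direct calculation rather than a citation.
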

    
    \begin{proof} Since the proof can be found in \cite{MN86}, except for the estimate on the third- and fourth-order derivatives, we only need to prove the estimates on the high-order derivatives. For brevity, we present the proof in Appendix \ref{sec:app-rarefaction-proof}. 
    \end{proof}

    \subsection{Viscous-dispersive shock wave}
    Next, we review the properties of the viscous-dispersive shock wave. For a given right-end state $(v_+,u_+)$ and $(v_m,u_m)\in S_2(v_+,u_+)$, let $\sigma$ be a shock speed given by the Rankine-Hugoniot condition:
    \[\sigma =\sqrt{-\frac{p(v_+)-p(v_m)}{v_+-v_m}}.\]
    Then, the viscous-dispersive 2-shock wave connecting $(v_m,u_m)$ and $(v_+,u_+)$ is given by the traveling wave solution $(v^S,u^S)(\xi)$, which is the solution to \eqref{viscous-dispersive-shock}	subjected to the boundary condition $\lim_{\xi\to-\infty}(\vs,\us)(\xi)=(v_m,u_m)$ and $\lim_{\xi\to+\infty}(\vs,\us)(\xi)=(v_+,u_+)$. The existence of the traveling wave and its properties are proved in \cite[Appendix B]{HKKL_pre}.
    
    \begin{lemma}[\cite{HKKL_pre}]\label{lem:shock-property}
        For a given right-end state $(v_+,u_+)$, there exists a positive constant $\delta_0$ such that the following statement holds. For any left-end state $(v_m,u_m) \in S_2(v_+,u_+)$ with $|v_+-v_m| \sim |u_+ - u_m|=:\delta_S<\delta_0$, there exists a unique solution $(\vs,\us)(\xi)$ to \eqref{viscous-dispersive-shock} such that $\vs(0)=\frac{v_m+v_+}{2}$. Moreover, there exists a positive constant $C$ such that the following estimates hold:
        \begin{equation}
        \begin{aligned}\label{shock-property}
        & (\us)'<0,\quad (\vs)'>0,\quad ' = d/d\xi,\\
        &C^{-1} (\vs)'(\xi) \le |(\us)'(\xi)| \le C (\vs)' (\xi), \quad \xi \in \mathbb{R},\\
        &|\vs(\xi)-v_\pm|\le C\delta_Se^{-C\delta_S|\xi|},\quad \pm \xi>0,\\
        &|(\vs)'(\xi)|\le C\delta_S^2e^{-C\delta_S|\xi|},\quad |(\vs)''(\xi)|\le C\delta_S|(\vs)'(\xi)|.
        \end{aligned}
        \end{equation}
    \end{lemma}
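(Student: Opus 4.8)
The plan is to reduce the traveling-wave system \eqref{viscous-dispersive-shock} to a single autonomous second-order ODE for the profile $\vs$, and then to analyze the associated planar vector field in the weak-shock regime $\delta_S\ll 1$, where the dispersion turns out to be a regular perturbation of a monotone viscous (Burgers-type) profile. First, the mass equation gives $\us_\xi=-\sigma\vs_\xi$, hence $\us(\xi)=u_m-\sigma(\vs(\xi)-v_m)$ after matching the left endpoint. Substituting this into the momentum equation and integrating once in $\xi$ (every term is a total $\xi$-derivative), using that all derivatives vanish at $-\infty$ together with the Rankine--Hugoniot relation $\sigma^2=-(p(v_+)-p(v_m))/(v_+-v_m)$, one obtains
\begin{equation*}
\frac{\kappa(\vs)}{(\vs)^5}\,\vs_{\xi\xi}=-H(\vs)-\frac{\sigma\mu(\vs)}{\vs}\,\vs_\xi+g(\vs)\,(\vs_\xi)^2,\qquad H(v):=\sigma^2(v-v_m)+p(v)-p(v_m),
\end{equation*}
where $g$ is smooth and bounded on a compact $v$-interval around $v_+$. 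By Rankine--Hugoniot, $H(v_m)=H(v_+)=0$; by strict convexity of $p$, $H<0$ on the open interval between $v_m$ and $v_+$; and by the Lax conditions $\lambda_2(v_+)<\sigma<\lambda_2(v_m)$ (automatic for weak $2$-shocks), $H'(v_m)<0<H'(v_+)$ with $|H'(v_\pm)|\sim\delta_S$.

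In the phase plane $(\vs,W):=(\vs,\vs_\xi)$ the only rest points are $(v_m,0)$ and $(v_+,0)$. Linearizing, and using $|H'(v_\pm)|\sim\delta_S$ while $\kappa,\mu$ and $\sigma\sim\sqrt{-p'(v_+)}$ are bounded above and below, one finds that $(v_+,0)$ is a \emph{stable node} with a fast eigenvalue $\sim-1$ and a slow eigenvalue $\sim-\delta_S$, whereas $(v_m,0)$ is a \emph{saddle} whose unstable eigenvalue is $\sim\delta_S$; crucially the discriminants stay positive for $\delta_S$ small, so no complex eigenvalues appear and the profile cannot develop the oscillations of a genuine dispersive shock. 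The correct scaling is exhibited by $\eta=\delta_S\xi$: one has $\vs-v_\pm\sim\delta_S$, $\vs_\xi\sim\delta_S^2$, $\vs_{\xi\xi}\sim\delta_S^3$, so the dominant balance at order $\delta_S^2$ is the first-order profile equation $\tfrac{\sigma\mu(v_m)}{v_m}\vs_\xi=-H(\vs)$, which (since $H<0$ between $v_m$ and $v_+$) has a unique monotone heteroclinic from $v_m$ to $v_+$, while the dispersive term and the quadratic term are $O(\delta_S)$ corrections. Equivalently, the slow manifold $\{W=\Phi(\vs;\delta_S)\}$ is normally hyperbolic and attracting (the transverse linearization is $-\sigma\mu(\vs)(\vs)^4/\kappa(\vs)\sim-1$), so by Fenichel's theorem the robust heteroclinic of the reduced one-dimensional flow persists and yields a genuine connecting orbit of the full system; imposing $\vs(0)=\tfrac{v_m+v_+}{2}$ fixes it uniquely. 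One may alternatively run an implicit-function-theorem argument in weighted spaces around the $\delta_S\to0$ profile.

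The remaining estimates in \eqref{shock-property} are read off from this structure. Monotonicity $(\vs)'>0$ holds because the limiting profile has derivative bounded away from $0$ in the $\eta$-variable and the perturbation is $O(\delta_S)$; then $\us_\xi=-\sigma\vs_\xi<0$, and $C^{-1}(\vs)'\le|(\us)'|=\sigma(\vs)'\le C(\vs)'$ is immediate from $\sigma\sim\sqrt{-p'(v_+)}$. Since the heteroclinic lies on the slow manifold, it enters $(v_+,0)$ along the slow stable eigendirection at rate $\sim\delta_S$ and leaves $(v_m,0)$ along the unstable eigendirection at rate $\sim\delta_S$, giving $|\vs(\xi)-v_+|\le C\delta_S e^{-C\delta_S\xi}$ for $\xi>0$ and $|\vs(\xi)-v_m|\le C\delta_S e^{C\delta_S\xi}$ for $\xi<0$. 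Differentiating the exponential tails costs one factor of the rate $\sim\delta_S$, hence $|(\vs)'(\xi)|\le C\delta_S^2 e^{-C\delta_S|\xi|}$; and $|(\vs)''(\xi)|\le C\delta_S|(\vs)'(\xi)|$ follows the same way, or directly from the ODE since its right-hand side is $O(\delta_S)(|\vs-v_\pm|+|\vs_\xi|)+O((\vs_\xi)^2)$.

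The main obstacle is the dispersive term: it carries the top-order derivative $\vs_{\xi\xi}$ (the original momentum balance is third order in $\vs$), so a priori it is a singular perturbation that could destroy monotonicity and produce an oscillatory, genuinely dispersive shock. The heart of the proof is therefore to show that in the weak-shock regime this term is in fact an $O(\delta_S)$ regular correction to the monotone viscous profile — equivalently, that the slow manifold is normally hyperbolic with an $O(1)$ spectral gap and that the endpoint linearizations have real (non-oscillatory) spectrum — and then to transfer the $O(\delta_S)$, rather than $O(1)$, decay rates of the reduced first-order flow to the full profile and its derivatives. Everything else reduces to routine perturbation theory and ODE tail estimates.
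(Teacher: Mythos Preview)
The paper does not prove this lemma in the present text; it is quoted from \cite{HKKL_pre} (Appendix B), so there is no in-paper proof to compare against directly. Your proposal is a correct and essentially standard route to such a result: reduce \eqref{viscous-dispersive-shock} to a single second-order autonomous ODE for $\vs$ via $\us_\xi=-\sigma\vs_\xi$ and one integration in $\xi$; analyze the planar vector field, observing that for $\delta_S$ small the endpoint linearizations have real spectrum with an $O(1)$ fast and an $O(\delta_S)$ slow eigenvalue (stable node at $v_+$, saddle at $v_m$), so no oscillatory tails can form; and then obtain the heteroclinic either by Fenichel persistence of the monotone profile of the reduced first-order flow $\tfrac{\sigma\mu}{v}\vs_\xi=-H(\vs)$ on the slow manifold, or by an implicit-function-theorem argument around that profile. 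The monotonicity, the equivalence $(\us)'\sim(\vs)'$, and the exponential tails with rate $\sim\delta_S$ (hence $|(\vs)'|\lesssim\delta_S^2 e^{-C\delta_S|\xi|}$ and $|(\vs)''|\lesssim\delta_S|(\vs)'|$) then follow from the slow eigenvalues and the ODE itself, exactly as you outline. Your identification of the main obstacle---that the dispersive term carries the top derivative and could in principle produce a genuinely oscillatory dispersive shock, but becomes a regular $O(\delta_S)$ correction in the weak-shock scaling---is precisely the point, and is very likely the structure of the argument in the cited reference as well.
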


    \subsection{Composition of smooth approximate of rarefaction and viscous-dispersive shock}
    In order to prove the asymptotic stability of the composite waves of the rarefaction and shock waves, we consider a composite wave of the smooth approximate rarefaction wave and the viscous-dispersive shock wave shifted by a Lipschitz continuous function $X(t)$, which will be defined later (see \eqref{ODE_X}):
    \begin{equation*} %\label{eq:composite wave}
    \begin{aligned}
        \ov(t,x)&:=\vr(t,x)+\vs(x-\sigma t -X(t))-v_m ,\\
        \ou(t,x)&:=\ur(t,x)+\us(x-\sigma t -X(t))-u_m.
    \end{aligned}
    \end{equation*}
   
Then, the composition wave $(\ov,\ou)(t,x)$ satisfies the system
\begin{align*}
    &\ov_t+\dot{X} \vs_x-\ou_x=0, \\
    &\ou_t+\dot{X} \us_x+p(\overline{v})_x= \left(\mu(\overline{v})  \dfrac{\ou_x}{\ov}\right)_x + \left(\kappa(\overline{v})\left(- \dfrac{\ov_{xx}}{\ov^{5}}+\dfrac{5}{2}\dfrac{ (\ov_x)^2}{ \ov^{6}}\right) -\dfrac{1}{2} \kappa'(\overline{v}) \dfrac{(\ov_x)^2}{\ov^5} \right)_x+Q_1,
\end{align*}
where the error term $Q_1$ is split as
\begin{equation} \label{Q1}
    Q_1=Q_1^I+Q_1^R.
\end{equation}
Here, the term $Q_1^I$ is the error comes from the interactions between waves, which is given as
\begin{equation} \label{Q1I}
\begin{aligned}
    Q_1^I&=(p(\overline{v})-p(v^R)-p(v^S))_x-\left(\mu(\overline{v})\frac{\ou_x}{\ov}-\mu(v^R)\frac{\ur_x}{\vr}-\mu(v^S)\frac{\us_x}{\vs}\right)_x\\
    &-\left( \kappa(\overline{v}) \left(-\frac{\ov_{xx}}{\ov^5} + \frac{5(\ov_x)^2}{2\ov^6} \right)-\kappa(v^R) \left(-\frac{\vr_{xx}}{(\vr)^5} + \frac{5(\vr_x)^2}{2(\vr)^6} \right)-\kappa(v^S) \left(-\frac{\vs_{xx}}{(\vs)^5} + \frac{5(\vs_x)^2}{2(\vs)^6} \right)\right)_x\\
    &+\left(  \kappa'(\overline{v}) \frac{(\ov_x)^2}{2\ov^5} -\kappa'(v^R) \frac{(\vr_x)^2}{2(\vr)^5}-\kappa'(v^S) \frac{(\vs_x)^2}{2(\vs)^5}\right)_x.
\end{aligned}
\end{equation}
 On the other hand, the error term $Q^R_1$ from the rarefaction wave equation is given by
\begin{equation*} %\label{Q1R}
\begin{aligned}
      Q_1^R&=-\left(\mu(v^R)\frac{\ur_x}{\vr} \right)_x-\left(\kappa(v^R) \left(-\frac{\vr_{xx}}{(\vr)^5} + \frac{5(\vr_x)^2}{2(\vr)^6} \right) + \kappa'(v^R) \frac{(\vr_x)^2}{2(\vr)^5} \right)_x.
\end{aligned}       
\end{equation*}

\subsection{Augmented system}
    As we explained, our method is based on the relative entropy estimate, which was well-established for the hyperbolic system with dissipation. On the other hand, the NSK system \eqref{eq:NSK} has a natural entropy given by
    \[\int_{\R} \left(\frac{|u|^2}{2}+ Q(v) + \frac{\kappa(v)}{2v^5}|v_x|^2\right)\,dx,\]
    where the first two terms are classical kinetic and internal energies of the fluid and the last term corresponds to the potential energy that comes from the capillarity of the fluid. To utilize the standard theory on the relative entropy, we define an auxiliary variable $w$ as
    \[w=- \dfrac{\sqrt{\kappa(v)} }{v^{5/2}} v_x,\]
    which enables us to rewrite the entropy in terms of extended variable $U=(v,u,w)$ as
    \begin{equation}\label{entropy}
    \eta(U):=\int_{\R} \left(\frac{|u|^2}{2}+Q(v)+\frac{|w|^2}{2 }\right)\,dx.
    \end{equation}
    The idea of introducing an additional dependent variable for Korteweg-type fluid is nothing new, and it has been exploited in some previous works, such as \cite{BDD06, BDD07}. Using the definition of $w$, it satisfies 
    \[w_t=\left( -\sqrt{\kappa(v)}\dfrac{v_t }{v^{5/2}}  \right)_x=\left( -\sqrt{\kappa(v)}\dfrac{u_x }{v^{5/2}} \right)_x,\]
    while the capillary term in the momentum equation becomes
    \[\kappa(v) \left( - \dfrac{v_{xx}}{v^5} + \dfrac{5}{2} \dfrac{(v_x)^2}{v^6} \right)-\dfrac{1}{2} \kappa'(v) \dfrac{(v_x)^2}{v^5}=\sqrt{\kappa(v)}\frac{w_x}{v^{5/2}}.\]
    Therefore, the original NSK system \eqref{eq:NSK} can be rewritten as the following extended system:
    \begin{equation}\label{NSK-w}
    \begin{aligned}
        &v_t-u_x=0, \\
        &u_t+p(v)_x=\left(\mu(v) \dfrac{u_x}{v} \right)_x + \left( \sqrt{\kappa(v)}\frac{w_x}{v^{5/2}} \right)_x, \\
        &w_t=-\left( \sqrt{\kappa(v)}\frac{u_x}{v^{5/2}}\right)_x.
    \end{aligned}
    \end{equation}
    In the following, we mainly refer to \eqref{NSK-w} as the NSK system, instead of the original system \eqref{eq:NSK}. In order to apply this extended framework to the viscous-dispersive shock, we also define
   	\[w^S:=-\sqrt{\kappa(v^S)}\frac{v^S_\xi}{(v^S)^{5/2}}.\]
   	Then, the (extended) viscous-dispersive shock wave $(v^S,u^S,w^S)$ satisfies
    \begin{align}
    \begin{aligned}\label{viscous-dispersive-shock-ext}
    &-\sigma \vs_\xi-\us_\xi=0,\\
    &-\sigma \us_\xi+(p(\vs))_\xi=\left(\mu(\vs) \frac{\us_\xi}{\vs} \right)_\xi+\left( \sqrt{\kappa(\vs)} \frac{w^S_\xi}{(\vs)^{5/2}} \right)_\xi,\\
    &-\sigma w^S_\xi=-\left(\sqrt{\kappa(\vs)}\frac{\us_\xi}{(\vs)^{5/2}} \right)_\xi.
    \end{aligned}
    \end{align}
Similarly, we extend the composite wave $(\ov,\ou)$ to $(\ov,\ou,\ow)$ by introducing
\[\ow:=-\sqrt{\kappa(\ov)}\dfrac{\ov_x}{\ov^{5/2}}.
\]
Then, it satisfies
\begin{equation}
\begin{aligned}\label{superposition wave-ext}
    &\ov_t-\ou_x=-\dot{X} \vs_x, \\
    &\ou_t+p(\ov)_x= \left(\mu(\ov) \dfrac{\ou_x}{\ov}\right)_x +  \left(\sqrt{\kappa(\ov)} \dfrac{\ow_x}{\ov^{5/2}}\right)_x-\dot{X} \us_x+Q_1,\\ 
    &\ow_t= -\left( \sqrt{\kappa(\ov)} \dfrac{\ou_x}{\ov^{5/2}}\right)_x-\dot{X} \ws_x +Q_2, 
\end{aligned}
\end{equation}
where $Q_1$ is the same term defined in \eqref{Q1}, and $Q_2$ is given by
\begin{equation} \label{Q2}
    \begin{aligned}
    Q_2=- \dot{X} \left(   \sqrt{\kappa(\vs)} \dfrac{ \vs_x}{(\vs)^{5/2}} -  \sqrt{\kappa(\ov)}\dfrac{ \vs_x}{\ov^{5/2}}  \right)_x.
    \end{aligned}
\end{equation}

The advantage of the extended system \eqref{NSK-w} over the original system \eqref{eq:NSK} is that it can be written as a general form of the hyperbolic conservation laws with dissipation. As a result, we may directly use the standard relative entropy method \cite{D96,D79} to study the stability of \eqref{NSK-w}.\\

In the later analysis, we will frequently use the following constants:
\[\sigma_m:=\sqrt{-p'(v_m)}, \quad \alpha_m := \frac{\gamma+1}{2 \gamma \sigma_m p(v_m)}=\frac{p''(v_m)}{2|p'(v_m)|^2 \sigma_m},\]
which are independent of both the rarefaction strength $\delta_R$ and the shock strengths $\delta_S$. In particular, the constant $\sigma_m$ satisfies
%These constants are indeed independent of the small shock strength $\delta_S$ since $v_+/2 \le v_m \le v_+$. Then, the following estimates on the $O(1)$-constants hold:
\begin{equation}\label{shock_speed_est}
	|\sigma -\sigma_m|=\left|\sqrt{-\frac{p(v_+)-p(v_m)}{v_+-v_m}}-\sqrt{-p'(v_m)}\right| \le C \delta_S,
\end{equation}
and
\begin{align}
	\begin{aligned}\label{shock_speed_est-2}
		&\|\sigma_m^2+p'(\vs)\|_{L^\infty}=\|p'(\vs)-p'(v_m)\|_{L^\infty} \le C \delta_S, \\
		&\left\| \frac{1}{\sigma_m^2}-\frac{p(\vs)^{-\frac{1}{\gamma}-1}}{\gamma} \right\|_{L^\infty}=\left\|\frac{(v_m)^{\gamma+1}}{\gamma }-\frac{(\vs)^{\gamma+1}}{\gamma}\right\|_{L^\infty} \le C \delta_S,\\
		&\left\| \frac{1}{\sigma_m^2}-\frac{p(\ov)^{-\frac{1}{\gamma}-1}}{\gamma} \right\|_{L^\infty}=\left\|\frac{(v_m)^{\gamma+1}}{\gamma }-\frac{\ov^{\gamma+1}}{\gamma}\right\|_{L^\infty} \le C \delta_0.
	\end{aligned}
\end{align} 
%Throughout the paper, a constant $C$ denotes may change from line to line, but it is independent of the small constants such as $\delta_S,\delta_R, \e_1$ and the lifespan $T$ given in Proposition \ref{apriori-estimate}.

\section{A priori estimate and Proof of Theorem \ref{thm:main}}\label{sec:apriori}\setcounter{equation}{0}

In this section, we present a priori estimate of the solution to \eqref{eq:NSK}, from which we obtain the global existence of the solution and the time-asymptotic stability of the composite wave.
	
\subsection{Local existence}
We first state the local-in-time existence of a strong solution for the original NSK system \eqref{eq:NSK}, and therefore its equivalent form \eqref{NSK-w}, in the following proposition.
	
\begin{proposition}\label{prop:local}
	Let $\underbar{v}$ and $\underbar{u}$ be smooth monotone functions such that
	\[(\underbar{v},\underbar{u})(x) = (v_{\pm},u_{\pm} )\quad \mbox{for}\quad\pm x\ge 1.\]
	Then, for any constants $M_0$, $M_1$, $\underline{\kappa}_0$, $\overline{\kappa}_0$, $\underline{\kappa}_{1}$, and $\overline{\kappa}_1$ satisfying
    \begin{align*}
        0<M_0<M_1 \quad \text{and} \quad 0<\underline{\kappa}_1<\underline{\kappa}_0<\overline{\kappa}_0<\overline{\kappa}_1,
    \end{align*}
	there exists a finite time $T_0>0$ such that if the initial data $(v_0,u_0)$ satisfy
    \begin{align*}
    \|v_0-\underline{v}\|_{H^2(\R)}+\|u_0-\underline{u}\|_{H^1(\R)}\le M_0 
    \quad \text{and} \quad
    \underline{\kappa}_0\le v_0(x)\le\overline{\kappa}_0,\quad\forall x\in\R,
    \end{align*}
	then the Navier-Stokes-Korteweg equations \eqref{eq:NS} admit a unique solution $(v,u)$ on $[0,T_0]$ such that 
	\begin{align*}
	&v-\underline{v}\in L^\infty ([0,T_0];H^2(\R))\cap L^2([0,T_0];H^3(\R)),\\ 
    & u-\underline{u}\in L^\infty([0,T_0];H^1(\R))\cap L^2([0,T_0];H^2(\R)).
	\end{align*}
    Moreover, the solution $(v,u)$ satisfies
	\[\|v-\underline{v}\|_{L^\infty([0,T_0];H^2(\R))}+\|u-\underline{u}\|_{L^\infty([0,T_0];H^1(\R))}\le M_1\]
	and 
	\[\underline{\kappa}_1\le v(t,x)\le \overline{\kappa}_1,\quad \forall(t,x)\in [0,T_0]\times \R.\]
\end{proposition}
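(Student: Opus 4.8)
The plan is to prove Proposition~\ref{prop:local} by a standard iteration-and-compactness scheme for the hyperbolic--parabolic system \eqref{NSK-w}, so I will only outline the structure; the argument is close to those in \cite{CHZ15,HKKL_pre}. First I would pass from \eqref{eq:NSK} to the extended system \eqref{NSK-w} in the unknown $U=(v,u,w)$ with $w:=-\sqrt{\kappa(v)}\,v_x/v^{5/2}$: as long as $v$ stays in a compact subset of $(0,\infty)$ and $\mu,\kappa$ are smooth and positive there, the map $v\mapsto w$ is a bijection between $\{v : v-\underline v\in H^2,\ v>0\}$ and the corresponding set of $w$ with $w-\underline w\in H^1$, where $\underline w:=-\sqrt{\kappa(\underline v)}\,\underline v_x/\underline v^{5/2}$, under which the two formulations are equivalent for solutions. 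Thus it suffices to produce a solution of \eqref{NSK-w} with data $(v_0,u_0,w_0)$ in the class $v-\underline v\in L^\infty_tH^2\cap L^2_tH^3$, $u-\underline u\in L^\infty_tH^1\cap L^2_tH^2$, $w-\underline w\in L^\infty_tH^1\cap L^2_tH^2$, the point being that the relation $v_x=-v^{5/2}w/\sqrt{\kappa(v)}$ then transfers the $L^2_tH^2$-regularity of $w$ into the $L^2_tH^3$-regularity of $v$. Since $\underline v,\underline u$ are constant outside $[-1,1]$, inserting $(\underline v,\underline u,\underline w)$ into \eqref{NSK-w} generates only smooth, compactly supported residual terms, so one may freely work with the perturbation equation.

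Next I would build approximate solutions $U^n=(v^n,u^n,w^n)$ by freezing the nonlinear coefficients at the previous iterate: set $U^0\equiv(\underline v,\underline u,\underline w)$, and given $U^n$, solve for $(u^{n+1},w^{n+1})$ the linear system obtained from the last two equations of \eqref{NSK-w} with $\mu(v)/v$ and $\sqrt{\kappa(v)}/v^{5/2}$ evaluated at $v^n$, then set $v^{n+1}:=v_0+\int_0^t u^{n+1}_x\,ds$. The structural fact that makes this work is that the linear $(u^{n+1},w^{n+1})$-system is parabolic in the sense of Petrowsky: its $2\times2$ principal matrix has characteristic polynomial $\lambda^2+\tfrac{\mu(v^n)}{v^n}\lambda+\tfrac{\kappa(v^n)}{(v^n)^5}=0$, whose roots have strictly negative real part (their product is positive and their sum is negative), uniformly while $v^n$ stays in a fixed compact subset of $(0,\infty)$; standard linear theory then gives a unique $(u^{n+1},w^{n+1})$ with $u^{n+1}-\underline u,\ w^{n+1}-\underline w\in L^\infty_tH^1\cap L^2_tH^2$ and the attendant smoothing estimates. (Equivalently, one may run a Galerkin scheme directly on \eqref{NSK-w}.)

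The heart of the proof is the uniform-in-$n$ energy estimate, which exploits the entropy \eqref{entropy}. Testing the $(u,w)$- and $(u_x,w_x)$-equations against $(u,w)$ and $(u_x,w_x)$, the cross terms coming from the capillary coupling cancel exactly --- the contribution $-\int \tfrac{\sqrt{\kappa(v)}}{v^{5/2}}\,w_x u_x$ from the $u$-equation is annihilated by $+\int \tfrac{\sqrt{\kappa(v)}}{v^{5/2}}\,u_x w_x$ from the $w$-equation, and at the $\partial_x$-level the top-order terms $\int \tfrac{\sqrt{\kappa(v)}}{v^{5/2}}\,u_{xx}w_{xx}$ likewise cancel --- which is nothing but the differential form of $\tfrac{d}{dt}\eta(U)=-\int\mu(v)u_x^2/v$. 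Combined with the parabolic dissipation $\int \mu(v)u_x^2/v$ and its differentiated version, and estimating the remaining lower-order terms by Sobolev embedding and Young's inequality, one obtains, for a time $T_0$ depending only on $M_0,M_1$, $\underline\kappa_0,\overline\kappa_0,\underline\kappa_1,\overline\kappa_1$, the bounds on $\mu,\kappa$ over the relevant range of $v$, and the fixed profile $(\underline v,\underline u)$, a closed inequality of the form
\[
\|(v^{n+1}-\underline v)(t)\|_{H^2}^2+\|(u^{n+1}-\underline u)(t)\|_{H^1}^2+\|(w^{n+1}-\underline w)(t)\|_{H^1}^2+\int_0^t\bigl(\|u^{n+1}_x\|_{H^1}^2+\|w^{n+1}_x\|_{H^1}^2\bigr)\,ds\le C(M_0),
\]
uniformly in $n$, provided the same bound holds for $U^n$. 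Choosing $M_1$ suitably and $T_0$ small keeps the iterates in a fixed ball of radius $M_1$; moreover the pointwise bounds $\underline\kappa_1\le v^{n+1}\le\overline\kappa_1$ propagate from $\underline\kappa_0\le v_0\le\overline\kappa_0$, because $v^{n+1}(t,x)-v_0(x)=\int_0^t u^{n+1}_x\,ds$ with $\|u^{n+1}_x\|_{L^1_tL^\infty_x}\le T_0^{1/2}\|u^{n+1}\|_{L^2_tH^2}\le CT_0^{1/2}$, so that for $T_0$ small one has $\|v^{n+1}-v_0\|_{L^\infty_{t,x}}<\min(\underline\kappa_0-\underline\kappa_1,\ \overline\kappa_1-\overline\kappa_0)$. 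This closes the induction on a common interval $[0,T_0]$.

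To conclude, an energy estimate on the differences $U^{n+1}-U^n$ in a suitable lower-regularity norm (shrinking $T_0$ if needed) shows the iteration is a contraction, so $U^n$ converges; together with the uniform high-norm bounds, weak-$*$ compactness, and interpolation, the limit $U=(v,u,w)$ solves \eqref{NSK-w} in the claimed regularity and inherits $\|v-\underline v\|_{L^\infty_tH^2}+\|u-\underline u\|_{L^\infty_tH^1}\le M_1$ and $\underline\kappa_1\le v\le\overline\kappa_1$. On the limit the identity $w=-\sqrt{\kappa(v)}v_x/v^{5/2}$ holds, since $w+\sqrt{\kappa(v)}v_x/v^{5/2}$ solves $\partial_t(\cdot)=0$ and vanishes at $t=0$ (here one uses $v_t=u_x$); this yields the $L^2_tH^3$-regularity of $v$. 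Uniqueness follows from the same contraction estimate applied to two solutions, and undoing the reduction of the first step gives a solution of \eqref{eq:NSK}. I expect the genuine obstacle to be precisely the coexistence of the second-order viscous term and the third-order capillary term, which loses derivatives in any naive energy estimate: the whole scheme hinges on the extended formulation \eqref{NSK-w}, whose near-symmetric structure forces the top-order capillary terms to cancel in the energy identities and encodes the missing top $v$-regularity in the variable $w$. A secondary, purely bookkeeping, difficulty is tracking how $T_0$ and the constants depend on the prescribed parameters $M_0,M_1,\underline\kappa_0,\overline\kappa_0,\underline\kappa_1,\overline\kappa_1$ so that the stated quantitative bounds --- not merely some finite bound --- come out.
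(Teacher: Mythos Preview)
Your proposal is correct and aligns with the paper's approach: the paper itself omits the proof entirely, stating only that it follows by ``the standard argument of constructing a sequence of approximate solutions and applying the Cauchy estimate'' as in \cite{S76}. Your outline---building iterates with frozen coefficients, closing uniform energy estimates via the entropy structure of the extended system \eqref{NSK-w}, and passing to the limit by a contraction/compactness argument---is precisely such a standard argument, and in fact supplies considerably more detail than the paper does.
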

	
	\begin{proof}
		The local existence can be proven, as presented in \cite{S76}, by using the standard argument of constructing a sequence of approximate solutions and applying the Cauchy estimate. For the sake of brevity, the proof is omitted.
	\end{proof}

	\subsection{Construction of shift} As mentioned in the introduction, we will use the method of $a$-contraction with shift, in which the dynamical shift function $X(t)$ is crucially used to shift the viscous shock. We define the shift function $X:\bbr_+\to\bbr$ as a solution to the following ODE:
	\begin{align}
	\begin{aligned}\label{ODE_X}
	\dot{X}(t)&=-\frac{M}{\delta_S}\Bigg(\int_{\R} a\left(t,x\right) \us_x\left(x- \sigma t- X(t)\right) (u-\overline{u})\,d x\\
	&\hspace{2cm}+\frac{1}{\sigma}\int_\R a(t,x)\pa_xp\big(\vs(x- \sigma t- X(t))\big) (u-\overline{u} )\,d x\Bigg),\\
 X(0)&=0,
	\end{aligned}
	\end{align}
	where $M=\frac{5\sigma_m^3\alpha_m}{4}$ and the weight function $a=a(t,x)$ is explicitly
    defined as
\begin{equation} \label{a}
	a(t,x):=1+\frac{u_m-u^S(x-\sigma t-X(t))}{\sqrt{\delta_S}}.
\end{equation}
From this definition, it is straightforward to verify that $1\le a\le 2$ and $a$ satisfies
\begin{equation}\label{a_x}
	a_x=-\frac{\us_x}{\sqrt{\delta_S}}=\frac{\sigma\vs_x}{\sqrt{\delta_S}}>0.
\end{equation}
The existence of shift function $X(t)$ as a Lipschitz continuous solution of the \eqref{ODE_X} is guaranteed by the standard Picard–Lindel\"of theorem, see also \cite[Lemma 3.3]{ KVW23}.

    	\begin{proposition} \cite{ KVW23}
		For any $c_1,c_2,c_3>0$, there exists a constant $C>0$ such that the following holds. For any $T>0$, and any functions $v,u\in L^\infty((0,T)\times\R)$ with
		\[c_1\le v(t,x)\le c_2,\quad |u(t,x)|\le c_3,\quad \forall (t,x)\in[0,T]\times\R,\]
		the ODE \eqref{ODE_X} has a unique Lipschitz continuous solution $X$ on $[0,T]$. Moreover, we have
		\[|X(t)|\le Ct,\quad t\in[0,T].\]
	\end{proposition}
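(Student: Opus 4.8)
The plan is to read \eqref{ODE_X} as the scalar initial value problem $\dot{X}(t)=F(t,X(t))$, $X(0)=0$, where
\[
F(t,X):=-\frac{M}{\delta_S}\left(\int_\R a(t,x)\,\us_x(x-\sigma t-X)\,(u-\ou)\,dx+\frac1\sigma\int_\R a(t,x)\,\pa_x p\big(\vs(x-\sigma t-X)\big)\,(u-\ou)\,dx\right),
\]
and where, to be precise, the weight $a(t,x)=1+\delta_S^{-1/2}\big(u_m-\us(x-\sigma t-X)\big)$ from \eqref{a} and the superposition $\ou(t,x)=\ur(t,x)+\us(x-\sigma t-X)-u_m$ are evaluated at the dummy variable $X$, so that $F$ depends on $X$ both through these factors and through the shifted arguments of $\us_x$ and $p(\vs)$. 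Since $u$ is only assumed to lie in $L^\infty$, the map $t\mapsto F(t,X)$ is merely measurable rather than continuous, so the classical Picard--Lindel\"of theorem does not apply verbatim; instead I would invoke the Carath\'eodory existence theorem and obtain uniqueness by a Gronwall estimate. This follows the argument of \cite[Lemma 3.3]{KVW23}.

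Three properties of $F$ have to be verified, all of them immediate consequences of the pointwise bounds and exponential decay of the shock profile recorded in Lemma \ref{lem:shock-property}. First, \emph{measurability in $t$}: for each fixed $X$ the map $t\mapsto F(t,X)$ is measurable, because $u$ is measurable and $\us_x$, $\pa_x p(\vs)$, $a$ are continuous in all of their arguments. Second, \emph{uniform boundedness}: using $1\le a\le 2$ from \eqref{a}, the finiteness and shift-invariance of $\|\us_x\|_{L^1(\R)}$ and $\|\pa_x p(\vs)\|_{L^1(\R)}$ (which follow from the exponential decay of $(\vs)'$ in Lemma \ref{lem:shock-property} and the boundedness of $p'(\vs)$, since $\vs$ stays bounded away from the vacuum), together with the crude bound $\|u-\ou\|_{L^\infty}\le\|u\|_{L^\infty}+\|\ur\|_{L^\infty}+\|\us\|_{L^\infty}+|u_m|\le c_3+C$, one obtains
\[
|F(t,X)|\le C_0\qquad\text{for all }t\in[0,T]\text{ and all }X\in\R,
\]
with $C_0>0$ a constant depending on $c_1,c_2,c_3$ and the given wave data, but not on $T$. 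Third, \emph{uniform Lipschitz continuity in $X$}: differentiating under the integral sign, the $X$-derivative hits $a$ (producing $\delta_S^{-1/2}\us_x$, which is bounded), hits $\us_x$ and $\pa_x p(\vs)$ (producing $\us_{xx}$ and $\pa_x^2 p(\vs)$, still integrable with exponential decay, since $|\us_{xx}|\lesssim\delta_S|\us_x|$), and hits $u-\ou$ (producing $\us_x$, bounded), so that $|\pa_X F(t,X)|\le L$ with $L$ independent of $t$ and $X$.

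Given these three facts, the Carath\'eodory theorem yields an absolutely continuous solution $X$ on a maximal subinterval of $[0,T]$; the uniform bound $|F|\le C_0$ keeps the solution in a fixed compact set and hence rules out finite-time blow-up, so the standard continuation argument extends it to all of $[0,T]$, and the uniform Lipschitz bound in $X$ combined with Gronwall's inequality gives uniqueness. Finally, since $|\dot{X}(t)|=|F(t,X(t))|\le C_0$ for a.e.\ $t\in[0,T]$, the solution $X$ is Lipschitz continuous with constant $C_0$, and integrating from $0$ with $X(0)=0$ gives $|X(t)|\le C_0 t$, which is the claimed estimate with $C=C_0$. The only genuinely delicate point is the lack of time-continuity of the vector field $F$, which is exactly why one must work in the Carath\'eodory framework instead of with classical ODE theory; everything else reduces to the uniform $L^1$ and $L^\infty$ bounds on the shock profile and an elementary ODE comparison argument.
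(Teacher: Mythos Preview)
Your proposal is correct and, in fact, more carefully stated than the paper's own treatment: the paper does not provide a proof of this proposition at all, merely remarking that ``the existence of the shift function $X(t)$ as a Lipschitz continuous solution of \eqref{ODE_X} is guaranteed by the standard Picard--Lindel\"of theorem, see also \cite[Lemma 3.3]{KVW23}.'' Your observation that $u$ is only assumed to lie in $L^\infty$, so that $t\mapsto F(t,X)$ is merely measurable and hence Carath\'eodory's theorem (rather than the classical Picard--Lindel\"of) is the right framework, is precisely the subtlety addressed in \cite[Lemma 3.3]{KVW23}; your three verifications (measurability, uniform boundedness, uniform Lipschitz in $X$) and the extraction of the linear bound $|X(t)|\le C_0 t$ from $|\dot X|\le C_0$ are exactly what that lemma does.
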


The reason for choosing the shift $X(t)$ as in \eqref{ODE_X} will become clear in Section \ref{sec:rel_ent}, where we exploit the definition of shift function to estimate the weighted relative entropy.

\subsection{A priori estimate}

We now present the a priori estimate, which is a key estimate and serves as a fundamental tool to obtain the long-time stability of the NSK equations.

\begin{proposition}\label{apriori-estimate}
	For a given state $(v_+,u_+)\in\bbr_+\times\bbr$, there exist positive constants $C_0,\delta_0$, and $\e_1$ such that the following holds:
		
	Suppose that $(v,u,w)$ is the solution to \eqref{NSK-w} on $[0,T]$ for some $T>0$, and $(\ov,\ou,\ow)$ is the composite waves defined in \eqref{superposition wave-ext}. Let $X$ be the Lipschitz continuous solution to \eqref{ODE_X} with the weight function $a$ defined in \eqref{a}. Assume that $\delta_R,\delta_S<\delta_0$ and
	\begin{align*}
	&v-\ov\in L^\infty(0,T;H^2(\bbr))\cap L^2(0,T;H^3(\R)),\\
	&u-\ou\in L^\infty(0,T;H^1(\bbr))\cap L^2(0,T;H^2(\bbr)) ,
	\end{align*}
	with
	\begin{equation}\label{smallness}
	\|v-\ov\|_{L^\infty(0,T;H^2(\bbr))}+\|u-\ou\|_{L^\infty(0,T;H^1(\bbr))}\le \e_1.
	\end{equation}
	Then, for all $0\le t\le T$,
	\begin{equation}
	\begin{aligned}\label{a-priori-1}
	&\left(\norm{v-\ov}_{L^2(\mathbb{R})}^2 +\norm{u-\ou}_{H^1(\mathbb{R})}^2+\norm{w-\ow}_{H^1(\mathbb{R})}^2\right)+\delta_S \int_0^t | \dot{X}(s)|^2 \, d s \\ 
	&\quad +\int_0^t \left( G_1+G_3+G^S_u+G^S_v+G^R \right) \, ds +  \int_0^t \left( D_{u_1} + D_{u_2} + G_{w} + D_{w_1} + D_{w_2} \right)\, ds   \\ 
	& \le C_0 \left(\norm{(v-\ov)(0,\cdot)}_{L^2(\mathbb{R})}^2 +\norm{(u-\ou)(0,\cdot)}_{H^1(\mathbb{R})}^2+\norm{w_0-\ow}_{H^1(\mathbb{R})}^2\right)+C_0\delta_R^{1/3},
	\end{aligned}
	\end{equation}
	
	where $C_0$ is independent of $T$, and
	\begin{equation}\label{good terms}
	\begin{aligned}
	&G_1:=\int_\R |a_x|\left|p(v)-p(\ov)-\frac{u-\ou}{2C_1}\right|^2\,dx,\quad G_3:=\int_\R |a_x||w-\ow|^2\,dx,\\
	&G^S_u:=\int_\R |u_x^S||u-\ou|^2\,dx,\quad G^S_v:=\int_{\R} |u^S_x||v-\ov|^2\,dx,\quad G^R:=\int_\R u^R_x |v-\overline{v}|^2\,dx,\\
	&D_{u_1}:=\int_\R |(u-\ou)_x|^2\,dx,\quad D_{u_2}:=\int_\R |(u-\ou)_{xx}|^2\,dx,\\
	&G_w:=\int_\R |w-\ow|^2\,dx,\quad D_{w_1}:=\int_{\R}|(w-\ow)_x|^2\,dx,\quad D_{w_2}:=\int_{\R}|(w-\ow)_{xx}|^2\,dx.
	\end{aligned}
	\end{equation}
	Here, $C_1$ is a positive constant defined in \eqref{C_star}.
\end{proposition}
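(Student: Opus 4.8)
The plan is to prove \eqref{a-priori-1} by combining a weighted relative entropy estimate, carried out with the method of $a$-contraction with shift, and a higher-order energy estimate that captures the Korteweg (dispersive) part of the perturbation. Throughout I would set $\phi:=v-\ov$, $\psi:=u-\ou$, $\zeta:=w-\ow$, and use from Lemma~\ref{lem:rarefaction_property} and Lemma~\ref{lem:shock-property} the monotonicity ($\ur_x>0$, $(\vs)'>0$, $(\us)'<0$) and the exponential localization of the building-block profiles, together with $1\le a\le 2$ and $a_x=\sigma\vs_x/\sqrt{\delta_S}>0$ from \eqref{a}--\eqref{a_x}. The first step is to differentiate the weighted functional $\int_\R a(t,x)\,\eta(U\,|\,\oU)\,dx$, where $\eta(U\,|\,\oU)=\tfrac12|\psi|^2+Q(v\,|\,\ov)+\tfrac12|\zeta|^2$ is the relative entropy of the augmented system \eqref{NSK-w} against the composite wave \eqref{superposition wave-ext}. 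Using both systems, the time derivative splits into: (i) a shift contribution proportional to $\dot X$ times the linear-in-$\psi$ terms carried by $\us_x$ and $\pa_x p(\vs)$; (ii) hyperbolic transport terms localized by $\us_x$ and by $\ur_x$; (iii) the viscous dissipation $-D_{u_1}$ together with dispersive flux terms coupling $\psi$ and $\zeta$; and (iv) interaction and approximation errors carried by $Q_1=Q_1^I+Q_1^R$ and $Q_2$. The choice of $\dot X$ in \eqref{ODE_X}, with $M=\tfrac54\sigma_m^3\alpha_m$ and $a$ as in \eqref{a}, is made precisely so that the leading part of (i) cancels the dangerous portion of the shock-localized quadratic form, leaving $-c\,\delta_S|\dot X|^2$ plus the good terms $G_1$, $G_3$, $G^S_u$, $G^S_v$; the rarefaction part of (ii) produces $G^R$ as in the rarefaction stability analysis of \cite{MN86}. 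This forces a careful study of a quadratic form in $(p(v)-p(\ov),\psi,\zeta)$, which is made coercive --- yielding $G_1$ with the sharp constant $C_1$ of \eqref{C_star} --- by invoking the relative-pressure and relative-energy bounds of Lemma~\ref{lem : Estimate-relative} together with the shock-strength estimates \eqref{shock_speed_est}--\eqref{shock_speed_est-2}.

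The second step is to control all error terms. The interaction error $Q_1^I$ is supported where the smooth rarefaction and the viscous-dispersive shock overlap; since their fronts separate at a linear rate ($\lambda_1(v_m)<0<\sigma$), $Q_1^I$ is exponentially small away from a bounded space-time region, hence integrable in time with a bound absorbable into the right-hand side of \eqref{a-priori-1}. The pure-rarefaction error $Q_1^R$ (and $Q_2$, which in addition carries a factor $\dot X$) is estimated using the $L^p$-decay of Lemma~\ref{lem:rarefaction_property}; optimizing the interpolation between $\|(\vr_x,\ur_x)\|_{L^p}\le C\min\{\delta_R,\delta_R^{1/p}(1+t)^{1/p-1}\}$ and the higher-derivative bounds produces the time-integrable quantity $C\delta_R^{1/3}$, which is the origin of the last term in \eqref{a-priori-1}. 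The terms proportional to $\dot X$ are absorbed into $\delta_S\int_0^t|\dot X|^2\,ds$ using \eqref{shock_speed_est-2} and the smallness \eqref{smallness}, while all cubic-in-$(\phi,\psi,\zeta)$ remainders are handled by Gagliardo--Nirenberg interpolation against the dissipation terms $D_{u_1},D_{w_1}$ and the a priori bound \eqref{smallness}.

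The third step is the higher-order energy estimate, needed because the relative entropy alone controls only $\|\phi\|_{L^2},\|\psi\|_{L^2},\|\zeta\|_{L^2}$ and $D_{u_1}$, whereas the estimate must be closed with $\psi\in H^1$ and $\zeta\in H^1$ (equivalently $\phi\in H^2$). I would differentiate the $u$- and $w$-equations of \eqref{NSK-w}, subtract the corresponding equations of \eqref{superposition wave-ext}, and test $\pa_x\psi$ against the differentiated momentum balance and $\zeta,\pa_x\zeta$ against the $w$-equation. Here the genuinely NSK-specific difficulty arises: the $w$-equation has no self-damping, so $G_w,D_{w_1},D_{w_2}$ cannot be produced from $w$ alone; instead they must be recovered from the skew-symmetric coupling between the flux $(\sqrt{\kappa(v)}\,w_x/v^{5/2})_x$ in the $u$-equation and $-(\sqrt{\kappa(v)}\,u_x/v^{5/2})_x$ in the $w$-equation, combined with the viscous damping $D_{u_1},D_{u_2}$ of the velocity component --- the classical cancellation structure for Korteweg fluids, now carried out in the presence of the moving shock and the nonconstant weight $a$. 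The error terms created at this order are again dominated by the rarefaction decay of Lemma~\ref{lem:rarefaction_property}, by $\delta_S\int_0^t|\dot X|^2\,ds$, or by the smallness of $\delta_0$ and $\e_1$.

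Finally, adding the relative entropy estimate of the first step to a suitably small multiple of the higher-order estimate of the third step, and choosing $\delta_0$ and $\e_1$ small enough, every bad term is absorbed into the good terms $G_1,G_3,G^S_u,G^S_v,G^R,D_{u_1},D_{u_2},G_w,D_{w_1},D_{w_2}$; integrating in time from $0$ to $t$ then yields \eqref{a-priori-1} with a constant $C_0$ independent of $T$. I expect the main obstacle to be the third step: extracting the dissipative control $G_w,D_{w_1},D_{w_2}$ of the dispersive variable when the dispersion itself is skew-symmetric and contributes no damping, so that all of it must be drawn from the viscous term of the momentum equation through the $u$--$w$ coupling, while keeping every cross term compatible with the coercive structure built in the first step and with the weight $a$ and shift $X$ attached to the shock.
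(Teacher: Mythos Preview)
Your proposal is correct and follows essentially the same route as the paper: the weighted relative entropy estimate via the $a$-contraction with shift (Section~\ref{sec:rel_ent}, Lemma~\ref{Main Lemma}), then the higher-order estimates obtained by cross-testing the momentum and $w$-equations (Section~\ref{sec:high-order}, Lemmas~\ref{lem: est-0}--\ref{lem: est-2}), and finally combining them with small weights as you describe. Two small refinements worth flagging: the coercivity in your first step is made explicit in the paper through the change of variables $y=(u_m-\us)/\delta_S$ and the Poincar\'e-type inequality \eqref{eq:Hardy-Legendre}, which is the sharp device that turns the leading block $-\tfrac{\delta_S}{2M}|\dot X|^2+B_1-\mathcal G_2-\tfrac34\mathcal D$ into $-cG^S_u$; and in the higher-order step the dissipation $G_w,D_{w_1},D_{w_2}$ comes not from testing $\zeta,\zeta_x$ against the $w$-equation (which, as you note, yields nothing) but from the \emph{cross}-testing of the momentum equation with $\omega$ and $\omega_x$ and the $w$-equation with $\psi$ and $\psi_x$, exactly the skew-coupling mechanism you identify.
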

	
	\begin{remark}
		From the smallness of $H^1$-perturbation of $v$ and the definition of $w$-variable, $w$ is equivalent to $v_x$. Therefore, the estimate \eqref{a-priori-1} is equivalent to the following formulation for the a priori estimate:
		\begin{equation}
		\begin{aligned} \label{a-priori-2}
		& \left(\norm{v-\ov}_{H^2 (\mathbb{R})}^2 +\norm{u-\ou}_{H^1(\mathbb{R})}^2\right)+\delta_S \int_0^t | \dot{X}(s)|^2 \, d s \\ 
		&\quad +\int_0^t \left( G_1+G_3+G^S_u+G^S_v+G^R \right) \, ds +  \int_0^t \left( D_{u_1} + D_{u_2} + G_{w} + D_{w_1} + D_{w_2} \right)\, ds   \\ 
		& \le C_0 \left(\norm{(v-\ov)(0,\cdot)}_{H^2 (\mathbb{R})}^2 +\norm{(u-\ou)(0,\cdot)}_{H^1(\mathbb{R})}^2\right)+C \delta_R^{1/3},
		\end{aligned}
		\end{equation}
		where $(v,u)$ is the solution to the original NSK equations \eqref{eq:NSK} and $(\ov,\ou)$ is defined in \eqref{superposition wave-ext}
	\end{remark}

\subsection{Global existence of perturbed solution} \label{Global existence}
Based on the a priori estimate \eqref{a-priori-1}, we show that the local-in-time solution constructed by Proposition \ref{prop:local} can be extended to the global solution. Let $C_0$, $\delta_0$, and $\e_1$ be positive constants given in Proposition \ref{apriori-estimate}, where $\delta_0$ and $\e_1$ are chosen to be sufficiently small as needed. We first choose $(\underline{v}(x),\underline{u}(x))$ as a smooth monotone functions satisfying
    \begin{equation}\label{ubarvbar}
        \sum_{\pm}\|(\underline{v}-v_\pm,\underline{u}-u_\pm)\|_{L^2(\R_\pm)}+\|\pa_x\underline{v}\|_{H^1(\R)}+\|\pa_x\underline{u}\|_{L^2(\R)}\le C_* \delta_0 ,
        \end{equation}
        for some constant $C_*>0$. Then, using Lemma \ref{lem:rarefaction_property} and Lemma \ref{lem:shock-property}, we have
	\begin{equation}
	\begin{aligned}\label{est-init}
	&\norm{\underline{v}(\cdot)-\ov(0,\cdot)}_{H^2(\mathbb{R})}+\norm{\underline{u}(\cdot)-\ou(0,\cdot)}_{H^1(\mathbb{R})}\\
	&\le \sum_{\pm}\norm{(\underline{v}-v_{\pm},\underline{u}-u_{\pm})}_{L^2(\R_\pm)} +\|\pa_x \underline{v}\|_{H^1(\mathbb{R})} + \|\pa_x \underline{u}\|_{L^2(\mathbb{R})}\\
 &\quad +\|(\vr-v_m,\ur-u_m)\|_{L^2(\R_+)}+ \|(\vs-v_+,\us-u_+)\|_{L^2(\R_+)} \\
 &\quad +\|(\vr-v_-,\ur-u_-)\|_{L^2(\R_-)} +\|(\vs-v_m,\us-u_m)\|_{L^2(\R_-)}  \\ 
 &\quad  + \|\vr_x\|_{H^1(\mathbb{R})} + \|\vs_x\|_{H^1(\mathbb{R})} 
 + \|\ur_x\|_{L^2(\mathbb{R})} + \|\us_x\|_{L^2(\mathbb{R})} \\ 
	&\le C_{**} \sqrt{\delta_0}.
	\end{aligned}
	\end{equation}
for some positive constant $C_{**}$. Now, we choose $\delta_0$ sufficiently small so that for any $\delta_R,\delta_S < \delta_0$, the following inequality \[ \frac{\e_1}{4(C_0+1)}-C_{**} \sqrt{\delta_0} - C_* \delta_0 -\delta_R^{1/3}>0\] holds. From this inequality, we define two positive constants $\e_*$ and $\e_0$ as
\[\e_*:=\frac{\e_1}{2(C_0+1)}- C_{**} \sqrt{\delta_0} -\delta_R^{1/3} \quad \text{and} \quad \e_0:=\frac{\e_1}{4(C_0+1)}.\]

Now, let $(v_0,u_0)$ be initial data satisfying the smallness assumption in Theorem \ref{thm:main}:
\begin{equation} \label{v0u0}
    \sum_{\pm}\left(\|v_0-v_{\pm}\|_{L^2(\R_\pm)}+\|u_0-u_\pm\|_{L^2(\R_\pm)}\right)+\|v_{0x}\|_{H^1}+\|u_{0x}\|_{L^2}<\e_0.
\end{equation}
	Using the estimates  \eqref{ubarvbar} and \eqref{v0u0}, we estimate the perturbation between initial data $(v_0,u_0)$ and smooth function $(\underline{v},\underline{u})$ as
	\begin{align} \label{v0v_}
	&\|v_0-\underline{v}\|_{H^2(\mathbb{R})}+\|u_0-\underline{u}\|_{H^1(\mathbb{R})}
    %\\
	%&\le\sum_{\pm}\left(\|v_0-v_\pm\|_{L^2(\R_\pm)}+\|u_0-u_{\pm}\|_{L^2(\R_\pm)}+\|\underline{v}-v_\pm\|_{L^2(\R_\pm)}+\|\underline{u}-u_{\pm}\|_{L^2(\R_\pm)}\right)\\
	%&\quad + \|v_{0x}\|_{H^1(\mathbb{R})}+\|u_{0x}\|_{L^2(\mathbb{R})}+\|\underline{v}_x\|_{H^1(\mathbb{R})}+\|\underline{u}_x\|_{L^2(\mathbb{R})}\\
    < \e_*.
	\end{align}
	Since $\e_1$ was small enough, we use Sobolev embedding to deduce that
    %\begin{equation} \label{v0v_}
    %    \|v_0-\underline{v}\|_{L^\infty(\mathbb{R})} \le C \e_*.
    %\end{equation}
	\[\frac{v_-}{2}\le v_0(x)\le 2v_+,\quad x\in\R.\]
	Therefore, the local-existence result in Proposition \ref{prop:local} guarantees that there exists a unique solution $(v,u)$ on a time interval $[0,T_0]$ for some $T_0 > 0$, which satisfies 
	\begin{equation}\label{est-local-1}
	\|v-\underline{v}\|_{L^\infty(0,T_0;H^2(\mathbb{R}))}+\|u-\underline{u}\|_{L^\infty(0,T_0;H^1(\mathbb{R}))}\le \frac{\e_1}{2}
	\end{equation}
	and
	\[\frac{v_-}{3}\le v(t,x)\le 3v_+,\quad (t,x)\in[0,T_0]\times\R.\]
    
   On the other hand,  we estimate the perturbation between $(\underline{v},\underline{u})$ and $(\ov,\ou)$ as in \eqref{est-init}:
	\begin{align*}
	&\|\underline{v}(\cdot)-\ov(t,\cdot)\|_{H^2(\mathbb{R})}+\|\underline{u}(\cdot)-\ou(t,\cdot)\|_{H^1(\mathbb{R})}\\
	&\le \sum_{\pm}\|(\underline{v}-v_{\pm},\underline{u}-u_{\pm})\|_{L^2(\R_\pm)}+\|\underline{v}_x\|_{H^1(\mathbb{R})}+\|\underline{u}_x\|_{L^2(\mathbb{R})} \\
    & \quad +\|(\vs-v_+,\us-u_+)(\cdot-\sigma t-X(t))\|_{L^2(\R_+)}+\|(\vr-v_m,\ur-u_m)(t,\cdot)\|_{L^2(\R_+)}\\
    & \quad +\|(\vr-v_-,\ur-u_-)(t,\cdot)\|_{L^2(\R_-)} +\|(\vs-v_m,\us-u_m)(t,\cdot-\sigma t-X(t))\|_{L^2(\R_-)}\\
	&\quad +\|\vr_x\|_{H^1(\mathbb{R})}+\|\vs_x\|_{H^1(\mathbb{R})}+\|\ur_x\|_{L^2(\mathbb{R})} +\|\us_x\|_{L^2(\mathbb{R})}\\
	&\le C\delta_R \sqrt{1+|\lambda_1| t} + C\sqrt{\delta_S} \sqrt{(1+|\sigma| t+|X(t)|)} \\ 
    &\le C\sqrt{\delta_0}(1+\sqrt{t}).
	\end{align*}
	Then, we take small time $T_1\in(0,T_0)$ satisfying $C\sqrt{\delta_0}(1+\sqrt{T_1})\le \frac{\e_1}{2}$ to ensure
	\begin{equation}\label{est-local-2}
	\|\underline{v}-\ov\|_{L^\infty(0,T_1;H^2(\mathbb{R}))}+\|\underline{u}-\ou\|_{L^\infty(0,T_1;H^1(\mathbb{R}))}\le\frac{\e_1}{2}.
	\end{equation}
	Thus, combining \eqref{est-local-1} and \eqref{est-local-2}, the smallness assumption in Proposition \ref{apriori-estimate} is satisfied in $[0,T_1]$:
	\[\|v-\ov\|_{L^\infty(0,T_1;H^2(\mathbb{R}))}+\|u-\ou\|_{L^\infty(0,T_1;H^1(\mathbb{R}))}\le\e_1.\]
  In particular, since the shift function $X(t)$ is absolutely continuous and 
  \begin{align*}
		v-\underline{v}\in L^\infty(0,T;H^2(\bbr)), \quad u-\underline{u}\in L^\infty(0,T;H^1(\bbr)) ,
		\end{align*}
  it follows that
  \begin{align*}
		v-\overline{v}\in L^\infty(0,T;H^2(\bbr)), \quad u-\overline{u}\in L^\infty(0,T;H^1(\bbr)) .
		\end{align*}

  To attain a global-in-time solution, we use a standard continuation argument. Suppose that the maximal existence time
 \[T_M:=\sup \left\{ t>0 : \sup_{s \in [0,t]}  \left( \| (v-\overline{v})(s,\cdot) \|_{H^2(\mathbb{R})}+\|(u-\overline{u})(s,\cdot) \|_{H^1(\mathbb{R})}  \|\right) \le \e_1 \right\}\]
 is finite. Then, the continuity argument  implies that
 \begin{equation}\label{norm_TM}
 \sup_{s \in [0,T_M]} \left( \| (v-\overline{v})(s,\cdot) \|_{H^2(\mathbb{R})}+\|(u-\overline{u})(s,\cdot) \|_{H^1(\mathbb{R})} \right) =\e_1.
 \end{equation}
 However, using \eqref{est-init} and \eqref{v0v_}, we can estimate the initial perturbation as 

\begin{align*}
     \| (v-\overline{v})(0,\cdot) \|_{H^2(\mathbb{R})}+\|(u-\overline{u})(0,\cdot) \|_{H^1(\mathbb{R})}
     \le  \frac{\e_1}{2(C_0+1)} - \delta_R^{1/3}.
 \end{align*}

Then, the a priori estimate in Proposition \ref{apriori-estimate} yields
 \begin{align*}
     \sup_{s\in[0,T_M]} \left( \| (v-\overline{v})(s,\cdot) \|_{H^2(\mathbb{R})}+\|(u-\overline{u})(s,\cdot) \|_{H^1(\mathbb{R})} \right)  &\le  \frac{\e_1}{2},
 \end{align*}
 which contradicts to \eqref{norm_TM}. Hence, $T_M=+\infty$ and the solution can be globally extended. Consequently, we obtain the following global estimate
	\begin{equation}
	\begin{aligned}\label{est-infinite}
	& \sup_{t>0}\left(\norm{v-\ov}_{H^2 (\mathbb{R})}^2 +\norm{u-\ou}_{H^1(\mathbb{R})}^2\right)+\delta_S \int_0^\infty | \dot{X}(t)|^2 \, d t \\ 
	&\quad +\int_0^\infty \left( G_1+G_3+G^S_u+G^S_v+G^R \right)  \, dt+  \int_0^\infty \left( D_{u_1} + D_{u_2} + G_{w} + D_{w_1} + D_{w_2} \right)\, dt  \\ 
	& \le C_0 \left(\norm{v_0-\ov}_{H^2 (\mathbb{R})}^2 +\norm{u_0-\ou}_{H^1(\mathbb{R})}^2\right) + C_0 \delta_R^{1/3}
	\end{aligned}
	\end{equation}
	and, for $t>0$,
	\begin{equation} \label{eq: X bound}
	|\dot{X}(t)|\le C_0 \left(\|(v-\overline{v})(t,\cdot)\|_{L^\infty(\mathbb{R}_+)}+\|(u-\overline{u})(t,\cdot)\|_{L^\infty(\mathbb{R}_+)}\right).
	\end{equation}

	\subsection{Time-asymptotic behavior} \label{Time-asymptotic behavior}
	We now prove the second part of Theorem \ref{thm:main}, the time-asymptotic behavior of the perturbation. Define
	\[g(t):=\|(v-\ov)_x\|_{H^1(\mathbb{R})}^2+\|(u-\ou)_x\|_{L^2(\mathbb{R})}^2.\]
	Our goal is to show that $g\in W^{1,1}(\R_+)$, which implies $\lim_{t\to\infty}g(t)=0$. Then, the Gagliardo-Nirenberg interpolation inequality and the uniform bound estimate \eqref{est-infinite}, we conclude that 
	\begin{equation} \label{eq: v,u limit}
	\lim_{t\to\infty}\left(\|(v-\ov)(t,\cdot)\|_{W^{1,\infty}(\mathbb{R})}+\|(u-\ou)(t,\cdot)\|_{L^\infty(\mathbb{R})}\right)=0.
	\end{equation}
	Moreover, combining  \eqref{eq: X bound} and \eqref{eq: v,u limit}, we obtain
	\[ \lim_{t \to \infty} |\dot{X}(t)| \le C_0 \lim_{t \to \infty} \left( \norm{(v-\ov)(t,\cdot)}_{L^\infty(\mathbb{R})} + \norm{(u-\ou)(t,\cdot)}_{L^\infty(\mathbb{R})} \right)=0.\]
	Therefore, it remains to show that $g\in W^{1,1}(\R_+)$.\\
	
	\noindent (1) $g\in L^1(\bbr_+)$: We use the definition of $w$ variable to observe that
	\begin{align*}
	|(v-\ov)_x| 
    %&= |\kappa^{-1/2} v^{5/2} w-\ok^{-1/2} \ov^{5/2} \ow| \\ 
    %&\le C|v||w-\ow|+|\ow||v-\ov|\\
	&\le C|w-\ow|+C|\ov_x||v-\ov|.
	\end{align*}
Similarly, the second-order derivative of the perturbation $v-\ov$ can estimated as
 \begin{align*}
	&|(v-\ov)_{xx}| %&\le C|v_x||w-\ow| + C |v| |(w-\ow)_x| + C | \ov_x| |v-\ov| + C |\ov_x| |(v-\ov)_x|  \\ 
    %& \le C|v_x||w-\ow|+ C|(w-\ow)_x|+C|\ov_x||v-\ov|+C|\ov_x||w-\ow| \\ 
    %&\le C|w-\ow|^2 + C|\ov_x||v-\ov||w-\ow| +C|(w-\ow)_x|+C|\ov_x||v-\ov|+C|\ov_x||w-\ow| \\
    \le C|w-\ow| +C |(w-\ow)_x| +C |\ov_x||v-\ov|.
    \end{align*}
	Applying these estimates, we obtain
	\begin{align*}
	\int_0^\infty |g(t)|\,dt
    %&=\int_0^\infty\int_{\R} |(v-\ov)_x|^2+|(v-\ov)_{xx}|^2+|(u-\ou)_x|^2\,dxdt\\
	&\le C\int_0^\infty\int_\R |w-\ow|^2+ |(w-\ow)_x|^2+ |\ov_x|^2\left|v-\ov\right|^2 +|(u-\ou)_x|^2\,dxdt\\
	&\le C\int_0^\infty (G_w +D_{w_1}+G^S_v+G^R + D_{u_1})\,d t<+\infty,
	\end{align*}
	where we used \eqref{est-infinite} in the last inequality. This implies $g\in L^1(\R_+)$.\\
	
	\noindent (2) $g'\in L^1(\R_+)$:  From the equations \eqref{NSK-w} and \eqref{superposition wave-ext}, we get the difference equation as
	\begin{equation} \label{eq:diff}
	\begin{aligned}
	&(v-\ov)_t - (u-\ou)_x = \dot{X}(t)\vs_x,\\
	&(u-\ou)_t + (p(v)-p(\ov))_x = \left( \frac{\mu(v)}{v} u_x-\frac{\mu(\overline{v})}{\ov} \ou_x \right)_x + \left( \frac{\sqrt{\kappa(v)}}{v^{5/2}} w_x - \frac{\sqrt{\kappa(\overline{v})}}{\ov^{5/2}} \ow_x\right)_x + \dot{X} \us_x -Q_1,\\
	&(w-\ow)_t = -\left( \frac{\sqrt{\kappa(v)}}{v^{5/2}} u_x - \frac{\sqrt{\kappa(\overline{v})}}{\ov^{5/2}} \ou_x \right)_x + \dot{X} \ws_x -Q_2.
	\end{aligned}
	\end{equation}
	Moreover, the third-order derivative of $(v-\ov)$ can be bounded as
	\[|(v-\ov)_{xxx}| 
	\le  C|w-\ow| +C |(w-\ow)_x| +C |(w-\ow)_{xx}| +C |\ov_x||v-\ov|.
	\]
	Then, using the above estimate and \eqref{eq:diff}, we estimate the time-integration of $g'$ as
	\begin{equation}
	\begin{aligned}\label{est:gprime}
	&\int_0^\infty|g'(t)|\,dt\\
	&=\int_0^\infty 2\left|\int_\R (v-\ov)_x(v-\ov)_{xt}\,dx+\int_\R (v-\ov)_{xx}(v-\ov)_{xxt}\,dx+\int_\R (u-\ou)_x(u-\ou)_{xt}\,dx\right|\,dt\\
	%&\le 2\int_0^\infty \left|\int_\R (v-\ov)_x\left((u-\ou)_{xx}+\dot{X}(t)\vs_{xx}\right)\,dx\right|\,dt\\ 
    %&\quad +2\int_0^\infty \left|\int_\R (v-\ov)_{xx}\left((u-\ou)_{xxx}+\dot{X}(t)\vs_{xxx}\right)\,dx\right|\,dt\\
	%&\quad + 2\int_0^\infty \Bigg|\int_\R (u-\ou)_x\Bigg(-(p(v)-p(\ov))_{xx}+ \left( \frac{\mu(v)}{v} u_x-\frac{\mu(v)(\overline{v})}{\ov} \ou_x \right)_{xx} \\
	%&\hspace{4cm}+ \left( \frac{\sqrt{\kappa(v)}}{v^{5/2}} w_x - \frac{\sqrt{\kappa(\overline{v})}}{\ov^{5/2}} \ow_x\right)_{xx} + \dot{X} \us_{xx} -(Q_1)_x \Bigg)\Bigg|\,dt\\
	&\le 2\int_0^\infty \left|\int_\R (v-\ov)_x\left((u-\ou)_{xx}+\dot{X}(t)\vs_{xx}\right)\,dx\right|\,dt\\
    &\quad + 2\int_0^\infty \left|\int_\R (v-\ov)_{xxx}\left(-(u-\ou)_{xx}-\dot{X}(t)\vs_{xx}\right)\,dx\right|\,dt\\
	&\quad + 2\int_0^\infty \Bigg|\int_\R (u-\ou)_{xx}\Bigg((p(v)-p(\ov))_{x}-\dot{X}(t) u^S_x \\
    &\hspace{4.5cm} -\left( \frac{\mu(v)}{v} u_x-\frac{\mu(\overline{v})}{\ov} \ou_x \right)_{x}-\left( \frac{\sqrt{\kappa(v)}}{v^{5/2}} w_x - \frac{\sqrt{\kappa(\overline{v})}}{\ov^{5/2}} \ow_x\right)_{x}  +Q_1 \Bigg)\Bigg|\,dt\\
	&\le C\int_0^\infty (G_w+D_{w_1}+D_{w_2}+G^S_v+G^R+D_{u_1}+D_{u_2}+|\dot{X}(t)|^2)\,dt\\
	&\quad + C\int_0^\infty \int_\R \left|\left( \frac{\mu(v)}{v} u_x-\frac{\mu(\overline{v})}{\ov} \ou_x \right)_{x}\right|^2+\left|\left( \frac{\sqrt{\kappa(v)}}{v^{5/2}} w_x - \frac{\sqrt{\kappa(\overline{v})}}{\ov^{5/2}} \ow_x\right)_{x}\right|^2 +|Q_1|^2 \,dxdt.
	\end{aligned}
	\end{equation}
	Since the first term in the right-hand side of \eqref{est:gprime} can be bounded by \eqref{est-infinite}, we only need to estimate the last three terms. For the first and second terms, we control them as
	\begin{align*}
	&\int_0^\infty \int_\R \left|\left( \frac{\mu(v)}{v} u_x-\frac{\mu(\overline{v})}{\ov} \ou_x \right)_x\right|^2\,dxdt\\
	%&=\int_0^\infty\int_\R \Bigg|\frac{1}{v}(u-\ou)_{xx}+\ou_{xx}\left(\frac{1}{v}-\frac{1}{\ov}\right)-\frac{1}{v^2}(v-\ov)_{x}(u-\ou)_x\\
	%&\hspace{2cm}-\frac{\ov_x}{v^2}(u-\ou)_x-\frac{\ou_x}{v^2}(v-\ov)_x-\ov_x\ou_x\left(\frac{1}{v^2}-\frac{1}{(\ov)^2}\right)\Bigg|^2\,dxdt\\
	&\le C\int_0^\infty\int_\R \Bigg(|(u-\ou)_{xx}|^2 + |\ou_x|^2|v-\ov|^2+|(u-\ou)_x|^2|(v-\ov)_x|^2\\
	&\hspace{3cm}+|\ov_x|^2|(u-\ou)_x|^2+|\ou_x|^2|(v-\ov)_x|^2+|\ov_x|^2|\ou_x|^2|v-\ov|^2\Bigg)\,dxdt\\
 	&\le C\int_0^\infty\left(G_w+G^S_v+G^R+D_{u_1}+D_{u_2}\right)\,dt  
    %+ C\|(v-\ov)_x\|^2_{L^\infty((0,\infty)\times\R)}\int_0^\infty D_{u_1} dt 
    <+\infty
    \intertext{and}
	&\int_0^\infty\int_\R \left|\left( \frac{\sqrt{\kappa(v)}}{v^{5/2}} w_x - \frac{\sqrt{\kappa(\overline{v})}}{\ov^{5/2}} \ow_x\right)_x\right|^2\,dxdt\\
	%&=\int_0^\infty\int_\R\Bigg|\frac{(w-\ow)_{xx}}{v^{5/2}}+\ow_{xx}\left(\frac{1}{v^{5/2}}-\frac{1}{(\ov)^{5/2}}\right)-\frac{5}{2}\frac{1}{v^{7/2}}(v-\ov)_x(w-\ow)_x\\
	%&\hspace{2cm}-\frac{5}{2}\frac{\ov_x}{v^{7/2}}(w-\ow)_x-\frac{5}{2} \frac{\ow_x}{v^{7/2}}(v-\ov)_x-\frac52\ov_x\ow_x\left(\frac{1}{v^{7/2}}-\frac{1}{(\ov)^{7/2}}\right)\Bigg|^2\,dxdt\\
	&\le C\int_0^\infty\int_\R \Bigg(|(w-\ow)_{xx}|^2+|\ow_{xx}|^2|v-\ov|^2+|(v-\ov)_x|^2|(w-\ow)_x|^2 \\
	&\hspace{3cm}+ |\ow_x|^2|v-\ov|^2 + |\ov_x|^2|(w-\ow)_x|^2 + |\ov_x|^2|\ow_x|^2|v-\ov|^2\Bigg)\,dxdt \\
% &\le C\int_0^\infty \left(G_1+G^S+D_{w_1}+D_{w_2}\right) \,dt+ C\int_0^\infty |(v-\ov)_x|^2|(w-\ow^X)_x|^2\,dxdt\\
	&\le C\int_0^\infty \left(G^S_v+G^R+D_{w_1}+D_{w_2}\right) \,dt
    %+ C\|v-\ov\|_{L^\infty(0,\infty;H^2(\R))}\int_0^\infty D_{w_1}\,dt
    <+\infty.
	\end{align*}
	For the third term, we obtain
\begin{align*}
    &\int_0^\infty \int_\mathbb{R} |Q_1|^2 \, dxdt \\
    &\le C \int_0^\infty \int_\mathbb{R} |Q_1^I|^2 \, dxdt + \int_0^\infty \int_\mathbb{R} |Q_1^R|^2 \, dxdt\\ 
    &\le C \int_0^\infty \int_\mathbb{R}\Big[ |(\vr_x,\vr_{xx},\vr_{xx}\vr_x,\vr_{xxx},|\vr_x|^3)| |\vs-v_m| + |(\vs_x,\vs_{xx},\vs_{xx}\vs_x,\vs_{xxx},|\vs_x|^3)| | \vr-v_m|  \\
    &\hspace{2.5cm} +|( \ur_x,|\vr_x|^2,\vr_{xx})| |\vs_x|+|(\us_x, |\vs_x|^2,\vs_{xx})| |\vr_x|\Big] dx\,dt \\
    &\quad + C \int_0^\infty \int_\mathbb{R} \Big[ |\ur_{xx}|^2+|\ur_x|^2|\vr_x|^2+|\vr_{xxx}|^2+|\vr_{xx}|^2|\vr_x|^2 +|\vr_x|^6 \Big] dx\,dt< +\infty.
\end{align*}
Note that, to show that the right-hand side of the above estimate is finite, we use Lemma \ref{lem: interaction estimates} and \eqref{Q1R-L2est}, which are proved in the later sections.

Thus, we conclude that $g\in W^{1,1}(\R_+)$ and complete the proof of the asymptotic behavior of the NSK equations. In the following sections, we focus on proving the a priori estimate in Proposition \ref{apriori-estimate}.

\subsection{Notations}  For clarity and simplicity, we adopt the following notations.
\begin{enumerate} 
    \item $C$ denotes a positive $O(1)$-constant that may vary from line to line but remains independent of the small parameters $\delta_0,\delta_R,\delta_S, \varepsilon_1$ and the time $T$ .
    \item We use the abbreviated notation $L^p:=L^p(\R)$ for any $p\in[1,\infty]$ and $H^1:=H^1(\R)$.
    \item To simplify expressions, we omit function arguments when their meaning is clear from context. 
     \begin{align*}
          &(\vr,\ur,w^R) := (\vr,\ur,w^R)(t,x),  &
          &(\vs,\us,\ws) := (\vs,\us,\ws)(x-\sigma t - X(t) ).
     \end{align*}
     \item For viscosity and capillarity terms, we use the following notation:
     \begin{align*}
         &\mu:=\mu(v), \quad \m=\mu(\ov), \quad \mr:=\mu(\vr), \quad \ms:=\mu(\vs),\\
         &\kappa:=\kappa(v), \quad \ok=\kappa(\ov), \quad \kr:=\kappa(\vr), \quad \ks:=\kappa(\vs).
     \end{align*}
     
\end{enumerate}
  %With the a priori estimates from Proposition \ref{apriori-estimate}, the time-asymptotic behavior follows naturally.
%For simplicity, we suppress function arguments whenever there is no ambiguity.

\section{Estimate on the weighted relative entropy with the shift}\label{sec:rel_ent}
	\setcounter{equation}{0}
	In this section, we start to prove Proposition \ref{apriori-estimate} by presenting the zeroth-order estimate on the perturbation between the solution $(u,v,w)$ to the NSK equations \eqref{NSK-w} and the composite waves $(\ov,\ou,\ow)$ satisfying \eqref{superposition wave-ext}. The main goal of this section is to derive the following lemma.
	
	\begin{lemma} \label{Main Lemma} 
		Under the hypotheses of Proposition \ref{apriori-estimate}, there exists a positive constant $C$ such that for all $t \in [0,T],$
		\begin{equation} \label{energy-est}
		\begin{aligned}
		& \left\| \left( v-\ov, u-\ou, w-\ow \right) \right\|_{L^2}^2  +  \int_0^t \left(\delta_S|\dot{X}|^2 +G_1+ G_3 + G^R + G^S_u+ G^S_v+ D_{u_1}\right)\, ds \\ 
		&\quad \le C \left\| \left( v-\ov, u-\ou, w-\ow \right)(0,\cdot) \right\|_{L^2}^2 +C \delta_R^{1/3} + C\sqrt{\delta_0} \int_0^t \| (w-\ow)_x\|_{L^2}^2 \, ds,
		\end{aligned}
		\end{equation}
		where $G_1, G_3, G^R,  G^S_u, G^S_v,$ and $D_{u_1}$ are defined in \eqref{good terms}.
	\end{lemma}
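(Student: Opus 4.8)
The plan is to carry out a weighted relative entropy estimate in the spirit of the $a$-contraction method, applied to the augmented system \eqref{NSK-w}. First I would introduce the relative entropy $\eta(U|\oU)$ associated with the entropy \eqref{entropy}, with the weight $a=a(t,x)$ from \eqref{a}, and compute $\frac{d}{dt}\int_\R a\,\eta(U|\oU)\,dx$ using the equations \eqref{NSK-w} and \eqref{superposition wave-ext}. Because the augmented system is a hyperbolic system with dissipation (in $u$) and a skew-symmetric coupling between $u$ and $w$, the standard relative entropy machinery of \cite{D79,D96} applies: the time derivative splits into a hyperbolic flux part, the parabolic dissipation terms coming from $\mu(v)u_x/v$, the coupling terms from $\sqrt{\kappa(v)}w_x/v^{5/2}$, the terms carrying the shift $\dot X$ (from $-\dot X\vs_x$, $-\dot X\us_x$, $-\dot X\ws_x$), the weight-derivative terms $a_t$ and the flux times $a_x$, and finally the error terms $Q_1$ and $Q_2$. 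I would organize the resulting identity so that the ``good'' negative terms $-\delta_S|\dot X|^2$, $-G_1$, $-G_3$, $-G^R$, $-G^S_u$, $-G^S_v$, $-D_{u_1}$ appear explicitly, and everything else is controlled as an error.

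The key steps, in order: (i) write the exact differential identity for $\frac{d}{dt}\int_\R a\,\eta(U|\oU)\,dx$, isolating the terms $-\int a_x(\text{flux}_u)(U|\oU)$ type quantities that, using Lemma \ref{lem : Estimate-relative} on $p(v|\ov)$ and $Q(v|\ov)$ and the fact that $a_x=\sigma\vs_x/\sqrt{\delta_S}>0$, yield the bad-sign term $-G_1$ together with a leftover that is absorbed; (ii) treat the rarefaction contribution separately, using that $\ur_x>0$ and the properties in Lemma \ref{lem:rarefaction_property} to extract $-G^R$ plus a term of size $\delta_R^{1/3}$ (this is where the $\delta_R^{1/3}$ on the right-hand side of \eqref{energy-est} is born, via a careful splitting of the spatial domain into the rarefaction region and its complement and use of the $L^p$–decay bounds); (iii) handle the shock contribution, using Lemma \ref{lem:shock-property} and the relations $|\us'|\sim|\vs'|\sim\delta_S^2 e^{-c\delta_S|\xi|}$, to produce $-G^S_u-G^S_v$; (iv) exploit the precise choice of the shift ODE \eqref{ODE_X}, with $M=\tfrac54\sigma_m^3\alpha_m$, so that the linear-in-$\dot X$ terms combine with the quadratic relative-entropy contribution to give $-\delta_S|\dot X|^2$ after completing the square — this cancellation is dictated exactly by the constants $\sigma_m,\alpha_m$ and the estimates \eqref{shock_speed_est}, \eqref{shock_speed_est-2}; (v) estimate the dissipation/coupling terms: the $u$-viscosity term gives $-D_{u_1}$ up to lower-order cross terms, while the $w$-coupling term is not sign-definite at this order and must be bounded by $C\sqrt{\delta_0}\int_0^t\|(w-\ow)_x\|_{L^2}^2\,ds$, which is why that term appears on the right-hand side of \eqref{energy-est}; and (vi) bound the interaction errors $Q_1^I$, $Q_1^R$ and $Q_2$ in $L^1_tL^1_x$ after pairing against the appropriate perturbation, using Lemma \ref{lem: interaction estimates} and \eqref{Q1R-L2est}, yielding contributions absorbed into $\delta_R^{1/3}$ and into the good terms with small constants.

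I would then integrate in time on $[0,t]$, collect the good terms on the left, move the $\delta_R^{1/3}$ and the $C\sqrt{\delta_0}\int\|(w-\ow)_x\|^2$ to the right, and absorb all remaining error terms (cross terms between dissipation and wave derivatives, terms of size $\delta_0$ or $\e_1$ times good quantities) into the left-hand side by choosing $\delta_0,\e_1$ small — standard once the signs are correct. The equivalence of $\int_\R a\,\eta(U|\oU)\,dx$ with $\|(v-\ov,u-\ou,w-\ow)\|_{L^2}^2$ (using $1\le a\le 2$ and Lemma \ref{lem : Estimate-relative} part (1)) converts the relative-entropy bound into the stated $L^2$ bound.

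The main obstacle I expect is step (iv) together with step (i): making the $a$-contraction mechanism actually close, i.e.\ showing that the single dissipative-at-this-scale good term $G_1$ (coming from the weight gradient $a_x$ acting on the hyperbolic flux) plus the shift term $-\delta_S|\dot X|^2$ genuinely dominate all the non-sign-definite quadratic-in-perturbation terms produced by the shock profile — in particular the terms that are $O(1)$ in $\delta_S$ rather than $O(\delta_S)$ or $O(\sqrt{\delta_S})$. This is exactly the delicate Kang–Vasseur balance: one must expand $p(v|\ov)$, $Q(v|\ov)$ to quadratic order around $v_m$, use the sharp constants in Lemma \ref{lem : Estimate-relative}(3), and verify a pointwise quadratic-form inequality (the analogue of the ``Poincaré-type'' lemma in \cite{KV21}) that forces the choice $a=1+(u_m-\us)/\sqrt{\delta_S}$ and $M=\tfrac54\sigma_m^3\alpha_m$. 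A secondary difficulty is that, unlike the pure Navier–Stokes case, the capillarity introduces the $w$-equation whose coupling term has no favorable sign at this order, so one cannot close in $L^2$ alone; the estimate must be stated with the remainder $C\sqrt{\delta_0}\int_0^t\|(w-\ow)_x\|_{L^2}^2\,ds$, to be absorbed only later by the higher-order estimate of Section \ref{sec:high-order}.
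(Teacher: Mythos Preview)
Your proposal is essentially correct and follows the paper's approach: weighted relative entropy with weight $a$, decomposition into the leading part closed by the Poincar\'e-type inequality after the change of variable $y=(u_m-\us)/\delta_S$, and a remainder controlled by smallness of $\delta_0,\e_1$ together with the interaction estimates. Two small corrections to your attribution of terms: first, the $\delta_R^{1/3}$ on the right does \emph{not} arise in step (ii) when extracting $G^R$ --- the rarefaction good term $\mathcal{G}^R=\int a\,\ur_x\,p(v|\ov)\,dx$ is simply nonnegative because $\ur_x>0$, with no remainder; the $\delta_R^{1/3}$ comes entirely from step (vi), specifically from $\int_0^\infty\|Q_1^R\|_{L^1}^{4/3}\,ds$ after pairing $Q_1^R$ against $u-\ou$ via $\|u-\ou\|_{L^\infty}$ and interpolation. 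Second, step (iv) is not just a completion of squares: the shift definition gives $\dot X Y=-\tfrac{\delta_S}{M}|\dot X|^2+\dot X\sum_{i\ge3}Y_i$ directly, and the genuine work is showing that the residual leading block $-\tfrac{\delta_S}{2M}|\dot X|^2+B_1-\mathcal{G}_2-\tfrac34\mathcal{D}$ is $\le -cG^S_u$, which is exactly the Poincar\'e step \eqref{eq:Hardy-Legendre} you flag in your obstacle discussion --- so make sure that inequality, not an algebraic square-completion, is the engine of your step (iv).
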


	To prove Lemma \ref{Main Lemma}, we will use the relative entropy method with a weight function, as the relative entropy is locally equivalent to the $L^2$-norm. 
    
\subsection{Wave interaction estimates}
Before we present the estimate on the relative entropy, we first estimate the interaction between the rarefaction and shock waves, as well as the error terms $Q_1$ and $Q_2$.

\begin{lemma} \label{lem: interaction estimates}
    Let $X$ be the shift defined by \eqref{ODE_X}. Under the same hypotheses as in Proposition \ref{apriori-estimate}, the following holds for all $t\in [0,T]$:
    \begin{equation}
        \begin{aligned}\label{est-interaction}
            &\|\vs_x(\vr-v_m)\|_{L^1}+\|\vr_x \vs_x \|_{L^1} \le C \delta_R \delta_S e^{-C \delta_S t},\\
            &\|\vs_x(\vr-v_m)\|_{L^2}+\|\vr_x \vs_x \|_{L^2} \le C \delta_R \delta_S^{3/2} e^{-C \delta_S t},\\
            &\|\vr_x( \vs-v_m)\|_{L^2} \le C \delta_R \delta_S e^{-C \delta_S t}, \\
            &\|Q_1^I\|_{L^2} \le C \delta_S \delta_R e^{-C \delta_S t},\quad \|Q_2\|_{L^2} \le C \e_1 \delta_R \delta_S^{3/2} e^{-C \delta_S t}.
        \end{aligned}
    \end{equation}
\end{lemma}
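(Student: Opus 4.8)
The estimates in Lemma \ref{lem: interaction estimates} all follow from exploiting the disjoint spatial supports (in an exponentially weighted sense) of the rarefaction wave and the shock wave, combined with the decay estimates of Lemma \ref{lem:rarefaction_property} and Lemma \ref{lem:shock-property}. The key geometric observation is that the rarefaction wave is concentrated near $x \approx \lambda_1(v_m)(1+t) \le 0$ while the shifted shock wave is concentrated near $x \approx \sigma t + X(t)$ with $\sigma > 0$; since $X(t) = o(t)$ (in fact $|X(t)| \le Ct$ with the constant controllable, plus $\delta_S|\dot X|^2$ is integrable), the two regions separate at a linear rate, and on the overlap one of the two factors is exponentially small in both $|x|$ and $t$.

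\textbf{Step 1: the bilinear interaction bounds.} To bound $\|\vs_x(\vr - v_m)\|_{L^1}$, I split $\R$ into the region $\{x \le \frac12(\sigma t + X(t))\}$ and its complement (or more symmetrically, compare $x$ to the midpoint between $\lambda_1(v_m)(1+t)$ and $\sigma t + X(t)$). On the left region, $x - (\sigma t + X(t)) \le -\frac12(\sigma t + X(t))$ has large negative value for $t$ large, so by Lemma \ref{lem:shock-property} $|\vs_x(x - \sigma t - X(t))| \le C\delta_S^2 e^{-C\delta_S|x - \sigma t - X(t)|} \le C\delta_S^2 e^{-C\delta_S \sigma t/2} e^{-C\delta_S|x - \sigma t - X(t)|/2}$, and we integrate against $|\vr - v_m| \le C\delta_R$ (globally bounded) to pick up a factor $\delta_S^{-1}$ from $\int e^{-C\delta_S|\cdot|}\,dx$, yielding $C\delta_R \delta_S e^{-C\delta_S t}$. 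On the right region, $x \ge \frac12(\sigma t + X(t))$ which is $\ge \lambda_1(v_m)(1+t)$ (since $\lambda_1(v_m) < 0 < \sigma$), so Lemma \ref{lem:rarefaction_property}(3) gives $|\vr - v_m| \le C\delta_R e^{-2|x - \lambda_1(v_m)(1+t)|} \le C\delta_R e^{-c t}e^{-c|x-\lambda_1(v_m)(1+t)|}$, and we integrate against $|\vs_x| \le C\delta_S^2$ (bounded), again gaining a factor from the exponential. The same splitting handles $\|\vr_x\vs_x\|_{L^1}$ using the $\vr_x$ decay from Lemma \ref{lem:rarefaction_property}(3) on the right and $\vs_x$ decay on the left. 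For the $L^2$-versions, one uses that $\|e^{-C\delta_S|\cdot|}\|_{L^2} \sim \delta_S^{-1/2}$ rather than $\delta_S^{-1}$, which accounts for the extra half-power $\delta_S^{3/2}$ in the second line; for $\|\vr_x(\vs - v_m)\|_{L^2}$ the shock-profile bound $|\vs - v_m| \le C\delta_S e^{-C\delta_S|\cdot|}$ (no extra $\delta_S$) combined with $\|\vr_x\|_{L^2} \le C\delta_R$ and the exponential-in-$t$ gain gives $C\delta_R\delta_S e^{-C\delta_S t}$.

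\textbf{Step 2: the error terms $Q_1^I$ and $Q_2$.} The term $Q_1^I$ in \eqref{Q1I} is, by construction, a sum of differences of nonlinear functions of $(\ov, \vr, \vs)$ and their derivatives, arranged so that each summand vanishes when either $\vr \equiv v_m$ or $\vs \equiv v_m$. Expanding $\ov = \vr + \vs - v_m$ and Taylor-expanding $p$, $\mu(\cdot)/(\cdot)$, $\kappa$, $\kappa'$ around the appropriate states, every term in $Q_1^I$ is a product of a quantity that is $O(|\vr - v_m|)$ or $O(|\vr_x|)$ or $O(|\vr_{xx}|)$, etc., with a quantity that is $O(|\vs - v_m|)$ or $O(|\vs_x|)$ or $O(|\vs_{xx}|)$, etc. — this is precisely the bookkeeping recorded implicitly in the $Q_1$-estimate in \eqref{est:gprime} of Section \ref{Global existence}. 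Each such product is then estimated in $L^2$ by the already-established interaction bounds of Step 1 (using also $|\vs_{xx}| \le C\delta_S|\vs_x|$, $|\vr_{xx}| \le C|\vr_x|$ etc. to reduce higher derivatives, and $L^\infty$ bounds on the lower-derivative factors), giving $\|Q_1^I\|_{L^2} \le C\delta_R\delta_S e^{-C\delta_S t}$. For $Q_2$ in \eqref{Q2}, we write $\sqrt{\kappa(\vs)}(\vs)^{-5/2} - \sqrt{\kappa(\ov)}\ov^{-5/2} = O(|\ov - \vs|) = O(|\vr - v_m|)$, so $Q_2$ is $\dot X$ times the $x$-derivative of (something $O(|\vr-v_m|)$ times $\vs_x$); distributing the derivative produces terms like $\vr_x \vs_x$, $(\vr - v_m)\vs_{xx}$, and $(\vr-v_m)\vs_x \cdot (\text{bounded})$, all controlled in $L^2$ by Step 1, and the prefactor $|\dot X| \le C_0(\|(v-\ov)(t)\|_{L^\infty(\R_+)} + \|(u-\ou)(t)\|_{L^\infty(\R_+)}) \le C\e_1$ from \eqref{eq: X bound} and \eqref{smallness} supplies the factor $\e_1$; matching the $L^2$-power $\delta_S^{3/2}$ to the $\vr_x\vs_x$-type bound of line two gives $\|Q_2\|_{L^2} \le C\e_1\delta_R\delta_S^{3/2}e^{-C\delta_S t}$.

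\textbf{Main obstacle.} The only genuinely delicate point is making the ``disjoint support'' heuristic rigorous given that the shock is shifted by the a-priori-unknown $X(t)$: one must verify that $\sigma t + X(t)$ still dominates $\lambda_1(v_m)(1+t)$ for all $t$, i.e.\ that the separation rate stays linear and positive. This is ensured by choosing $\delta_0$ (hence the bound $|\dot X| \le C\e_1$, and $|X(t)| \le Ct$ with $C$ small relative to $|\sigma - \lambda_1(v_m)| \sim \sigma_m$) sufficiently small, exactly as in the smallness conditions of Proposition \ref{apriori-estimate}; after that, the midpoint splitting is clean. The rest is careful but routine Taylor expansion and bookkeeping of which factor carries the rarefaction decay and which carries the shock decay.
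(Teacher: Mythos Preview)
Your proposal is correct and matches the paper's approach: the paper defers the first three bilinear estimates to \cite{KVW23} (where precisely your midpoint-splitting argument is carried out), and for $Q_1^I$ and $Q_2$ it performs the explicit term-by-term Taylor expansion you describe, arriving at pointwise bounds of the form $|Q_1^I|\le C\sum(\text{rarefaction factor})\cdot(\text{shock factor})$ and $|Q_2|\le C|\dot X|(|\vs_{xx}||\vr-v_m|+|\vs_x||\vr_x|)$ before invoking Step~1. The only minor correction is that you should obtain $|\dot X|\le C\e_1$ directly from the definition \eqref{ODE_X} together with \eqref{smallness} (since $\frac{M}{\delta_S}\|u-\ou\|_{L^\infty}\int|\us_x|\,dx\le C\e_1$), rather than citing \eqref{eq: X bound}, which is stated in the global-existence section and would be logically circular here.
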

\begin{proof} 
	The proofs of the first three estimates in \eqref{est-interaction} are identical to those of \cite[Lemma 4.2]{KVW23}. Since the properties of the viscous-dispersive shock wave in \eqref{shock-property} are the same as those of the viscous shock wave in \cite[Lemma 2.2]{KVW23}, we can directly apply the proof of \cite{KVW23}. Therefore, we focus on estimating $Q_1^I$ and $Q_2$.\\
	
	\noindent $\bullet$ (Estimate of $Q_1^I$): Recall the definition of $Q_1^I$ in \eqref{Q1I}:
\begin{align*}
    Q_1^I&=(p(\overline{v})-p(v^R)-p(v^S))_x-\left(\m\frac{\ou_x}{\ov}-\mu^R\frac{\ur_x}{\vr}-\mu^S\frac{\us_x}{\vs}\right)_x\\
    &-\left( \ok \left(-\frac{\ov_{xx}}{\ov^5} + \frac{5(\ov_x)^2}{2\ov^6} \right)-\kr \left(-\frac{\vr_{xx}}{(\vr)^5} + \frac{5(\vr_x)^2}{2(\vr)^6} \right)-\ks \left(-\frac{\vs_{xx}}{(\vs)^5} + \frac{5((\vs)_x)^2}{2(\vs)^6} \right)\right)_x\\
    &+\left(  \ok' \frac{(\ov_x)^2}{2\ov^5} -(\kr)' \frac{(\vr_x)^2}{2(\vr)^5}-(\ks)' \frac{(\vs_x)^2}{2(\vs)^5}\right)_x.
\end{align*}
Using $\ov=\vr+\vs-v_m$, the first two terms of $Q_1^I$ can be easily estimated as
    \begin{align*}
        (p(\overline{v})-p(v^R)-p(v^S))_x & =p'(\ov)(\vr_x+\vs_x)-p'(\vr)\vr_x-p'(\vs)\vs_x\\
        &\le C \big[ |\vr_x| |\vs-v_m| + |\vs_x| |\vr-v_m|\big],
    \end{align*}
and
    \begin{align*}
        \left(\m\frac{\ou_x}{\ov}-\mr \frac{\ur_x}{\vr}-\ms \frac{\us_x}{\vs}\right)_x 
        &\le \left| \left( \ur_x \left( \frac{\m}{\ov}-\frac{\mr}{\vr} \right) \right)_x \right| +\left|\left( \us_x \left( \frac{\m}{\ov}- \frac{\ms}{\vs}\right) \right)_x \right|\\
        &\le C \big[ |\ur_{xx}| |\vs-v_m| + |\us_{xx}| |\vr-v_m| + |\ur_x||\vs_x| +|\us_x| |\vr_x|\big].
    \end{align*}
Next, we estimate the third term of $Q^I_1$ as
    \begin{align*}
        &\left| \left( \ok \left(-\frac{\ov_{xx}}{\ov^5} + \frac{5(\ov_x)^2}{2\ov^6} \right)-\kr \left(-\frac{\vr_{xx}}{(\vr)^5} + \frac{5(\vr_x)^2}{2(\vr)^6} \right)-\ks \left(-\frac{\vs_{xx}}{(\vs)^5} + \frac{5(\vs_x)^2}{2(\vs)^6} \right) \right)_x \right| \\
        &\le \left| \left(  - \ok \frac{\ov_{xx}}{\ov^5} + \kr \frac{\vr_{xx}}{(\vr)^5} +(\ks)\frac{\vs_{xx}}{(\vs)^5} \right)_x \right| +\frac{5}{2} \left| \left( \ok  \frac{(\ov_x)^2}{\ov^6} -\kr  \frac{(\vr_x)^2}{(\vr)^6} -\ks  \frac{(\vs_x)^2}{(\vs)^6} \right)_x \right|.
    \end{align*}
    However, since $\ov_{xx}=\vr_{xx}+\vs_{xx}$, we have
    \begin{align*}
        &\left| \left(  - \ok \frac{\ov_{xx}}{\ov^5} + \kr \frac{\vr_{xx}}{(\vr)^5} +\ks \frac{\vs_{xx}}{(\vs)^5} \right)_x \right|
        \\
        &=\left| \left( \vr_{xx} \left(\frac{\ok}{\ov^5}-\frac{\kr}{(\vr)^5} \right)+ \vs_{xx} \left(\frac{\ok}{\ov^5}-\frac{\ks}{(\vs)^5} \right) \right)_x \right|\\
        &\le \left| \vr_{xxx} \left(\frac{\ok}{\ov^5}-\frac{\kr}{(\vr)^5} \right)+ \vs_{xxx} \left(\frac{\ok}{\ov^5}-\frac{\ks}{(\vs)^5} \right) \right| +\left|  \vr_{xx} \left(\frac{\ok}{\ov^5}-\frac{\kr}{(\vr)^5} \right)_x+ \vs_{xx} \left(\frac{\ok}{\ov^5}-\frac{\ks}{(\vs)^5} \right)_x \right| \\
        &\le C \big[ |\vr_{xxx}| | \vs-v_m| + |\vs_{xxx}| |\vr-v_m| +  |\vr_{xx}| |\vs_x| +  |\vr_{xx}| |\vr_x| |\vs-v_m| +  |\vs_{xx}| |\vr_x| + |\vs_{xx}| |\vs_x| |\vr-v_m| \big]\\
        &\le C\left[ |(\vr_{xx}\vr_x,\vr_{xxx})| |\vs-v_m| + |(\vs_{xx}\vs_x,\vs_{xxx})| | \vr-v_m| + |\vr_{xx}||\vs_x|+|\vs_{xx}||\vr_x|\right].
    \end{align*}
    Similarly, we use   $(\ov_{x})^2=(\vr_{x})^2+(\vs_{x})^2+2 \vr_x \vs_x$ to obtain
    \begin{align*}
        &\left| \left( \ok  \frac{(\ov_x)^2}{\ov^6} -\kr  \frac{(\vr_x)^2}{(\vr)^6} -\ks  \frac{(\vs_x)^2}{(\vs)^6} \right)_x \right|\\
        &\le \left| \left( (\vr_{x})^2 \left(\frac{\ok}{\ov^6}-\frac{\kr}{(\vr)^6} \right)+ (\vs_{x})^2 \left(\frac{\ok}{\ov^6}-\frac{\ks}{(\vs)^6} \right) \right)_x \right|  +2 \left|   \left( \frac{\ok}{\ov^6}\vr_x\vs_x  \right)_x \right| \\
        &\le C \big[  |\vr_{x}\vr_{xx}| | \vs-v_m| + |\vs_{x}\vs_{xx}| | \vr-v_m|  \\ 
        &\quad +  |\vr_{x}|^2 |\vs_x| +  |\vr_{x}|^3  |\vs-v_m| +  |\vs_{x}|^2 |\vr_x| +  |\vs_{x}|^3  |\vr-v_m| + |\vr_{xx}| |\vs_x| + |\vr_{x}| |\vs_{xx}|\big] \\
        &\le C\big[ | (\vr_x\vr_{xx},|\vr_x|^3 ) | |\vs-v_m| + | (\vs_x\vs_{xx},|\vs_x|^3 ) | |\vr-v_m| \\ 
        &\quad +|( |\vr_x|^2,\vr_{xx})| |\vs_x|+|( |\vs_x|^2,\vs_{xx})| |\vr_x|\big].
    \end{align*}
    Finally, the fourth term of $Q^I_1$ can be estimated by using similar argument as
    \begin{align*}
        &\frac{1}{2}\left| \left(  \ok' \frac{(\ov_x)^2}{\ov^5} -(\kr)' \frac{(\vr_x)^2}{(\vr)^5}-(\ks)' \frac{(\vs_x)^2}{(\vs)^5}\right)_x \right|\\
        &\le \frac{1}{2} \left| \left( (\vr_{x})^2 \left(\frac{\ok'}{\ov^5}-\frac{(\kr)'}{(\vr)^5} \right)+ (\vs_{x})^2 \left(\frac{\ok'}{\ov^5}-\frac{(\ks)'}{(\vs)^5} \right) \right)_x \right|  + \left|   \left( \frac{\ok'}{\ov^5}\vr_x\vs_x  \right)_x \right| \\
        &\le C \big[  |\vr_{x}\vr_{xx}| | \vs-v_m| + |\vs_{x}\vs_{xx}| | \vr-v_m|  \\ 
        &\quad +  |\vr_{x}|^2 |\vs_x| +  |\vr_{x}|^3  |\vs-v_m| +  |\vs_{x}|^2 |\vr_x| +  |\vs_{x}|^3  |\vr-v_m| + |\vr_{xx}| |\vs_x| + |\vr_{x}| |\vs_{xx}|\big] \\
        &\le C\big[ | (\vr_x\vr_{xx},|\vr_x|^3 ) | |\vs-v_m| + | (\vs_x\vs_{xx},|\vs_x|^3 ) | |\vr-v_m| \\ 
        &\quad +|( |\vr_x|^2,\vr_{xx})| |\vs_x|+|( (\vs_x)^2,\vs_{xx})| |\vr_x|\big].
    \end{align*}
    Combining all the estimates above, we obtain
    \begin{align*}
        |Q_1^I| &\le C\Big[ |(\vr_x,\vr_{xx},\vr_{xx}\vr_x,\vr_{xxx},|\vr_x|^3)| |\vs-v_m| + |(\vs_x,\vs_{xx},\vs_{xx}\vs_x,\vs_{xxx},|\vs_x|^3)| | \vr-v_m|  \\
        &\quad +|( \ur_x,|\vr_x|^2,\vr_{xx})| |\vs_x|+|(\us_x, |\vs_x|^2,\vs_{xx})| |\vr_x|\Big],
    \end{align*}
and using Lemma \ref{lem:rarefaction_property} and Lemma \ref{lem:shock-property}, one has
\[\|Q_1^I\|_{L^2} \le C \delta_S \delta_R e^{-C \delta_S t}. \]

\noindent $\bullet$ (Estimate of $Q_2$): Again, we recall the definition of $Q_2$
\[ Q_2= - \dot{X} \left(   \sqrt{\kappa(\vs)} \dfrac{ \vs_x}{(\vs)^{5/2}} -  \sqrt{\kappa(\ov)}\dfrac{ \vs_x}{\ov^{5/2}}  \right)_x,\]
from which we directly obtain 
\begin{align*}
    |Q_2| &\le C |\dot{X}| \left[ |\vs_{xx}| |\vr-v_m| + |\vs_{x}| |\vr_x|\right].
\end{align*}
Therefore, we again use Lemma \ref{lem:rarefaction_property} and Lemma \ref{lem:shock-property} to derive
\[\|Q_2\|_{L^2} \le C \e_1 \delta_R \delta_S^{3/2} e^{-C \delta_S t}.\]
\end{proof}

\subsection{Relative entropy method} As we mentioned in Section \ref{sec:prelim}, the NSK system \eqref{NSK-w} can be written in the general form of the hyperbolic system as follows:
\begin{equation}\label{eq:NS-abs}
\partial_t U +\partial_x A(U) = \partial_x (M(U) \partial_x D \eta (U)),
\end{equation}
where 
\[U:=\begin{pmatrix}
    v\\u\\w
\end{pmatrix},\, 
A(U):=\begin{pmatrix}
-u\\  p(v) \\0
\end{pmatrix},\, 
D\eta(U):=\begin{pmatrix}
-p(v)\\  u \\w,
\end{pmatrix},\, M(U):=
\begin{pmatrix}
0 &0 &0\\
0 &\frac{\mu(v)}{v}  &\frac{\sqrt{\kappa(v)}}{v^{5/2}}\\
0 &-\frac{\sqrt{\kappa(v)}}{v^{5/2}} &0
\end{pmatrix}.\]
Here, $\eta$ is the convex entropy defined as in \eqref{entropy} and $D\eta$ denotes the gradient of $\eta$ with respect to $U=(v,u,w)$. Similarly, the composite wave $\overline{U}$ defined as
\[\overline{U}=
\begin{pmatrix}
    \ov\\\ou\\\ow
\end{pmatrix}=
\begin{pmatrix}
    \vr+\vs-v_m \\ \ur+\us-u_m\\ -\frac{\sqrt{\kappa(\ov)}}{\ov^{5/2}}\ov_x
\end{pmatrix}\] 
satisfies the following equation
\begin{equation}\label{eq:composite_wave}
\partial_t\overline{U} +\partial_x A(\overline{U})=\partial_x \left(M(\oU) \partial_x D \eta(\oU) \right) -\dot{X} \partial_x U^S+ 
\begin{pmatrix}
0 \\ Q_1 \\ Q_2    
\end{pmatrix}
, \quad U^S:=\begin{pmatrix}
v^S\\u^S\\w^S
\end{pmatrix},
\end{equation}
where $Q_1$ and $Q_2$ are defined in \eqref{Q1} and \eqref{Q2}. We now define the relative entropy between $U$ and $\overline{U}$ as
\[\eta(U|\overline{U}):=\eta(U)-\eta(\overline{U})-D\eta(\overline{U})(U-\overline{U}).\]
Also, the relative flux $A(U|\overline{U})$ and the relative entropy flux $G(U;\overline{U})$ are defined as
\[A(U|\overline{U})=A(U)-A(\overline{U})-DA(\overline{U})(U-\overline{U}),\]
and 
\[G(U;\overline{U})=G(U)-G(\overline{U})-D\eta(\overline{U})(A(U)-A(\overline{U})),\]
respectively, where $G$ is the entropy flux for $\eta$ satisfying the condition $D_iG(U)=\sum_{k=1}^3 D_k \eta(U)D_iA_k(U)$ for $i=1,2,3$. For the system \eqref{NSK-w}, we can choose $G(U)=p(v)u$. Therefore, all the relative quantities are explicitly computed as
\begin{align}
\begin{aligned}\label{relative_functional}
&\eta(U|\overline{U})=
\frac{|u-\overline{u}|^2}{2}+Q(v|\overline{v})+\frac{|w-\overline{w}|^2}{2},\quad
A(U|\overline{U})=\begin{pmatrix}
0\\
p(v|\overline{v})\\
0
\end{pmatrix},\\
&G(U;\overline{U})=(p(v)-p(\overline{v}))(u-\overline{u}).
\end{aligned}
\end{align}

We start with the estimate of the weighted relative entropy.

\begin{lemma} %\label{lem:rel-ent}
    Let $a$ be the weight function defined in \eqref{a} and $X:[0,T]\to\bbr$ be any Lipschitz continuous function. Let $U$ be a solution to \eqref{eq:NS-abs},  and $\overline{U}$ be the composite wave satisfying \eqref{eq:composite_wave}. Then,
\begin{equation}\label{est-weight-rel-ent} 
\frac{d}{dt}\int_\mathbb{R} a(t,x) \eta (U(t,x))|\oU(t,x)) \, dx=\dot{X}(t)Y+\mathcal{J}^{\textup{bad}}-\mathcal{J}^{\textup{good}},
\end{equation}
where the terms $Y$, $\mathcal{J}^{\textup{bad}}$, and $\mathcal{J}^{\textup{good}}$ are defined as
\begin{align*}
Y&:=-\int_\mathbb{R} a_x \eta(U|\overline{U})\,d x +\int_\mathbb{R} a D^2\eta(\overline{U})U^S_x (U-\overline{U})\,d x,\\
\mathcal{J}^{\textup{bad}}&:=\int_\mathbb{R} a_x (p(v)-p(\ov))(u-\ou) \, dx-\int_\mathbb{R} a\us_x p(v | \ov) \,d x  -\int_\mathbb{R}  a_x \frac{\mu(u-\ou) (u-\ou)_x}{v}  \, dx\\
&\quad - \int_\mathbb{R} a_x \frac{\sqrt{\kappa} (u-\ou) (w-\ow)_x}{v^{5/2}}  \, dx+ \int_\mathbb{R} a_x \frac{\sqrt{\kappa} (w-\ow)(u-\ou)_x}{v^{5/2}}  \, dx \\
&\quad-  \int_\mathbb{R} a_x   (u-\ou) \left( \frac{\mu}{v}-\frac{\overline{\mu}}{\ov} \right) \ou_x \, dx 
 - \int_\mathbb{R} a_x (u-\ou) \left( \frac{\sqrt{\kappa}}{v^{5/2}}-\frac{\sqrt{\ok}}{\ov^{5/2}} \right) \ow_x  \, dx  \\
 &\quad+ \int_\mathbb{R} a_x (w-\ow) \left( \frac{\sqrt{\kappa}}{v^{5/2}}-\frac{\sqrt{\ok}}{\ov^{5/2}} \right) \ou_x  \, dx -  \int_\mathbb{R} a (u-\ou)_x \left( \frac{\mu}{v}-\frac{\overline{\mu}}{\ov} \right) \ou_x \, dx \\
&\quad -\int_\mathbb{R} a (u-\ou)_x \left( \frac{\sqrt{\kappa}}{v^{5/2}}-\frac{\sqrt{\ok}}{\ov^{5/2}} \right) \ow_x \, dx + \int_\mathbb{R} a (w-\ow)_x \left( \frac{\sqrt{\kappa}}{v^{5/2}}-\frac{\sqrt{\ok}}{\ov^{5/2}} \right) \ou_x \, dx,\\
&\quad-\int_\mathbb{R} a (u-\ou)  Q_1  \, dx -\int_\mathbb{R} a(w-\ow) Q_2 \, dx ,\\
\mathcal{J}^{\textup{good}}&:= \frac{\sigma}{2}\int_\mathbb{R} a_x |u-\ou|^2 \, dx +\sigma \int_\mathbb{R} a_x Q(v|\ov) \, dx+\frac{\sigma}{2}\int_\mathbb{R} a_x |w-\ow|^2 \, dx\\
&\quad+\int_\mathbb{R} a \ur_x p(v|\ov) \, dx +  \int_\mathbb{R} a \frac{\mu|(u-\ou)_x|^2}{v} \,dx.
\end{align*}
\end{lemma}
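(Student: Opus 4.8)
The plan is to differentiate the weighted relative entropy $\int_\mathbb{R} a\,\eta(U|\oU)\,dx$ in time and organize the resulting terms according to their origin: the transport of the weight, the hyperbolic flux contribution, and the parabolic/dissipative contribution coming from the right-hand sides of \eqref{eq:NS-abs} and \eqref{eq:composite_wave}. First I would write
\[
\frac{d}{dt}\int_\mathbb{R} a\,\eta(U|\oU)\,dx
= \int_\mathbb{R} a_t\,\eta(U|\oU)\,dx + \int_\mathbb{R} a\,\pa_t\big(\eta(U|\oU)\big)\,dx,
\]
and since $a(t,x) = 1 + \frac{u_m - \us(x-\sigma t - X(t))}{\sqrt{\delta_S}}$ satisfies $a_t = -(\sigma + \dot X)\,a_x$ by the chain rule, the first term splits into a $-\sigma\int a_x\eta(U|\oU)$ piece, which will feed $\mathcal{J}^{\textup{good}}$, and a $-\dot X\int a_x\eta(U|\oU)$ piece, which is exactly the first term of $Y$. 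For the second term I would use the standard relative-entropy computation: expanding $\pa_t\eta(U|\oU) = D\eta(U|\oU)\cdot U_t - (U-\oU)^\top D^2\eta(\oU)\,\oU_t$ (after using $\pa_t\eta(U) = D\eta(U)\cdot U_t$ and similarly for $\oU$), and substituting $U_t$ and $\oU_t$ from \eqref{eq:NS-abs} and \eqref{eq:composite_wave}.

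The core of the calculation is the classical relative-entropy flux identity for the hyperbolic part: the terms $-\pa_x A(U)$ and $-\pa_x A(\oU)$ combine, after integration by parts against $a$, into $-\int_\mathbb{R} a_x\,G(U;\oU)\,dx$ plus a term involving $A(U|\oU)$ and the derivative of $D\eta(\oU)$; using the explicit forms in \eqref{relative_functional}, $G(U;\oU) = (p(v)-p(\ov))(u-\ou)$ produces the $\int a_x(p(v)-p(\ov))(u-\ou)\,dx$ term in $\mathcal{J}^{\textup{bad}}$, while the piece $-\int a\,\pa_x(D\eta(\oU))\cdot A(U|\oU)\,dx$ yields $-\int a\,\pa_x(-p(\ov))\,p(v|\ov)\,dx = \int a\,\pa_x p(\ov)\,p(v|\ov)\,dx$; decomposing $\pa_x p(\ov) = p'(\vr)\vr_x + (\text{shock part}) = -|p'|\vr_x\cdot(\ldots)$, the $\vr$ part gives the good term $\int a\,\ur_x\,p(v|\ov)\,dx$ (using $\ur_x>0$, $p' <0$) and the $\us_x$ part gives the bad term $-\int a\,\us_x\,p(v|\ov)\,dx$. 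The $\dot X\,\pa_x U^S$ term in \eqref{eq:composite_wave} contributes, upon pairing with $D^2\eta(\oU)(U-\oU)$, the second term $\int a\,D^2\eta(\oU)U^S_x(U-\oU)\,dx$ of $Y$. The $(Q_1,Q_2)$ source produces $-\int a(u-\ou)Q_1 - \int a(w-\ow)Q_2$.

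For the dissipative terms I would handle $\pa_x(M(U)\pa_x D\eta(U))$ and $\pa_x(M(\oU)\pa_x D\eta(\oU))$ together. Pairing with $D\eta(U)-D\eta(\oU) = (-(p(v)-p(\ov)),\,u-\ou,\,w-\ow)^\top$ and integrating by parts, the leading diagonal viscosity term $\frac{\mu(v)}{v}$ acting on $u-\ou$ self-interaction gives, after distributing $a$, the good term $-\int a\,\frac{\mu|(u-\ou)_x|^2}{v}\,dx$ together with the bad $a_x$-boundary term $-\int a_x\frac{\mu(u-\ou)(u-\ou)_x}{v}\,dx$; the antisymmetric off-diagonal $\sqrt{\kappa}/v^{5/2}$ entries produce the two $a_x$ capillary cross-terms and cancel in their leading-order part (this is the structural reason $w$ has no dissipation term of its own but still closes); and the difference $M(U)-M(\oU)$ acting on $\pa_x D\eta(\oU)$ generates the six "variable-coefficient difference" terms in $\mathcal{J}^{\textup{bad}}$ of the form $\int a_x(u-\ou)(\frac{\mu}{v}-\frac{\m}{\ov})\ou_x$ and the three corresponding $\int a(\cdot)_x(\cdot)(\frac{\sqrt\kappa}{v^{5/2}}-\frac{\sqrt{\ok}}{\ov^{5/2}})(\cdot)$ terms. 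Finally, collecting the three quadratic $a_x$-terms $\frac{\sigma}{2}a_x|u-\ou|^2$, $\sigma a_x Q(v|\ov)$, $\frac{\sigma}{2}a_x|w-\ow|^2$ from the $-\sigma\int a_x\eta(U|\oU)$ piece (using $\eta(U|\oU) = \frac12|u-\ou|^2 + Q(v|\ov) + \frac12|w-\ow|^2$) completes $\mathcal{J}^{\textup{good}}$.

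The main obstacle I anticipate is bookkeeping rather than any single deep estimate: one must carefully track every integration by parts, correctly assign each resulting term either to $a$-weighted or $a_x$-weighted families, and verify that the antisymmetric structure of $M(U)$ in the $(u,w)$ block produces exactly the cancellations needed so that the only surviving dissipation is the single good term $\int a\,\frac{\mu|(u-\ou)_x|^2}{v}\,dx$, with everything else landing in $\mathcal{J}^{\textup{bad}}$ to be controlled later. A secondary subtlety is the use of $a_t = -(\sigma+\dot X)a_x$ to split off precisely the $-\dot X\int a_x\eta(U|\oU)$ contribution to $Y$ without leaving stray $\sigma$-dependent remainders; this relies on the fact that $\us$ depends on $(t,x)$ only through $x-\sigma t - X(t)$.
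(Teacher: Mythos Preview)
Your plan is correct and matches the paper's approach exactly: the paper also invokes $a_t=-(\sigma+\dot X)a_x$ and the abstract relative-entropy calculation (citing \cite[Lemma~2.3]{KV21}) to obtain the decomposition into $\dot X Y$, $-\sigma\int a_x\eta(U|\oU)$, and the six pieces $I_{11},\dots,I_{16}$, which it then expands componentwise.

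One small slip to fix: in the term $-\int_\R a\,\pa_x(D\eta(\oU))\cdot A(U|\oU)\,dx$, the nonzero component of $A(U|\oU)=(0,\,p(v|\ov),\,0)^\top$ pairs with the \emph{second} component $\ou_x$ of $\pa_x D\eta(\oU)$, not with $\pa_x(-p(\ov))$. So the correct output is $-\int_\R a\,\ou_x\,p(v|\ov)\,dx$, and the split into the good $\int a\,\ur_x\,p(v|\ov)$ and bad $-\int a\,\us_x\,p(v|\ov)$ pieces comes directly from $\ou_x=\ur_x+\us_x$, with no need to pass through $\pa_x p(\ov)$. Your stated final terms are nonetheless the right ones.
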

\begin{proof}
Since $U$ and $\overline{U}$ satisfy general form of the hyperbolic system \eqref{eq:NS-abs} and \eqref{eq:composite_wave}, we can use a similar estimate as in \cite[Lemma 2.3]{KV21} (see also \cite[Lemma 4.2]{HKKL_pre}). Precisely, we have 
\begin{align*}
\frac{d}{dt} \int_\mathbb{R} a \eta (U|\oU) \, dx &=\dot{X}(t)Y-\sigma\int_\mathbb{R} a_x\eta(U|\oU) \, dx+ \sum_{i=1}^5 I_{1i},
\end{align*}
where

\begin{align*}
I_{11}&:=-\int_\mathbb{R} a G(U;\oU)_x \, dx,\quad I_{12}:=-\int_\mathbb{R} a (D \eta (\oU))_x A(U| \oU) \, dx,\\
I_{13}&:=\int_\mathbb{R} a \left(D\eta(U)-D\eta(\oU) \right) \left(M(U)\left(D\eta (U)-D\eta(\oU) \right)_x\right)_x\, dx,\\
I_{14}&:=\int_\mathbb{R} a \left( D \eta(U)-D\eta(\oU)\right)\left( (M(U) -M(\oU))(D \eta(\oU))_x\right)_x\,dx,\\
I_{15}&:=\int_\mathbb{R}a (D\eta)(U|\oU) \left( M(\oU) \partial_x D \eta(\oU) \right)_x\, dx,\quad I_{16}:=-\int_\mathbb{R} a D^2 \eta(\oU)(U-\oU) \begin{pmatrix} 0 \\ Q_1 \\ Q_2 \end{pmatrix} \, dx.
\end{align*} 
Using \eqref{relative_functional}, we explicitly compute each term as
\begin{align*}
I_{11}&=\int_\R a_x (p(v)-p(\ov) )(u-\ou)\, dx, \quad I_{12}=-\int_\R a \ou_x p(v|\ov) \, dx=-\int_{\R}a(u^R_x+u^S_x)p(v|\ov)\,dx, \\
I_{13}&=- \int_\R a \frac{\mu|(u-\ou)_x|^2}{v} \, dx \\
&\quad -\int_\R a_x \left(  \frac{\mu (u-\ou)(u-\ou)_x}{v}+\frac{\sqrt{\kappa}(u-\ou)(w-\ow)_x}{v^{5/2}}-\frac{\sqrt{\kappa}(w-\ow)(u-\ou)_x}{v^{5/2}}\right) dx,\\
I_{14}&=-\int_\R a_x\Bigg( (u-\ou)\left(\frac{\mu}{v}-\frac{\m}{\ov} \right)\ou_x+ (u-\ou)\left( \frac{\sqrt{\kappa}}{v^{5/2}}-\frac{\sqrt{\ok}}{\ov^{5/2}} \right)\ow_x-(w-\ow) \left( \frac{\sqrt{\kappa}}{v^{5/2}}-\frac{\sqrt{\ok}}{\ov^{5/2}} \right)\ou_x \Bigg) dx \\
&\quad  -\int_\R a \Bigg( (u-\ou)_x\left(\frac{\mu}{v}-\frac{\m}{\ov} \right)\ou_x+ (u-\ou)_x\left( \frac{\sqrt{\kappa}}{v^{5/2}}-\frac{\sqrt{\ok}}{\ov^{5/2}} \right)\ow_x-(w-\ow)_x \left( \frac{\sqrt{\kappa}}{v^{5/2}}-\frac{\sqrt{\ok}}{\ov^{5/2}} \right)\ou_x \Bigg) dx,\\
I_{15}&=0, \quad
I_{16}=-\int_\mathbb{R} a (u-\ou) Q_1 \, dx -\int_\R a (w-\ow) Q_2 \, dx.
\end{align*}
After expanding the terms, one can obtain the desired estimate.
\end{proof}

\subsection{Decomposition of the right-hand side of \eqref{est-weight-rel-ent}} 

Now, we decompose the terms on the right-hand side of \eqref{est-weight-rel-ent}. First of all, we need to control the terms in $\mathcal{J}^{\textup{bad}}$, and the most problematic term is the first term:
\[\int_\R a_x(p(v)-p(\overline{v}))(u-\overline{u})\,dx,\]
which is a cross term between $v$ and $u$. Therefore, we manipulate this cross-term and separate the terms regarding $v$ and $u$ variables by using the other terms. To this end, we need the following lemma, which extracts an exact quadratic structure $|p(v)-p(\overline{v})|^2$ from the relative quantities.
  
\begin{lemma} %\label{lem:quad}
    There exists a positive constant $C_1$ such that 
    \begin{align}
    \begin{aligned}\label{quadratic estimate}
        -\int_{\mathbb{R}}& a \us_x p(v|\ov) \, dx-\sigma \int_{\mathbb{R}} a_x Q(v|\ov) \, dx\\
        &\le -C_1\int_{\R}a_x|p(v)-p(\ov)|^2\,dx - \frac{\sigma}{2\gamma}\int_\mathbb{R} a_x \left( p(\ov)^{-\frac{1}{\gamma}-1} -p(\vs)^{-\frac{1}{\gamma}-1} \right) |p(v)-p(\ov)|^2\, dx \\ 
        &\quad +C\sqrt{\delta_S}(\sqrt{\delta_S}+\delta_0) \int_\R a_x\big|p(v)-p(\ov)\big|^2 \, d x+C\int_\R a_x\big|p(v)-p(\ov)\big|^3 \, d x.
    \end{aligned}
    \end{align}
\end{lemma}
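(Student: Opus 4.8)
The statement is a purely pointwise/algebraic estimate on the left-end state $v_m$ together with the smallness parameters $\delta_S,\delta_0$, so the whole proof reduces to Taylor expansions around $v=\ov$ combined with the bounds on the relative quantities from Lemma~\ref{lem : Estimate-relative}. The plan is to rewrite the two integrals on the left-hand side as a single integral against the positive measure $a_x\,dx$ (recall $a_x>0$ by \eqref{a_x}), and then to compare the integrand, which involves $\us_x\,p(v|\ov)$ and $\sigma Q(v|\ov)$, with a quadratic form in $q:=p(v)-p(\ov)$.

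\textbf{Step 1: Replace $\us_x$ by $-\sigma\vs_x$ and then $a_x$.} Using the first equation in \eqref{viscous-dispersive-shock}, $\us_\xi=-\sigma\vs_\xi$, so $\us_x=-\sigma\vs_x$; combined with $a_x=\sigma\vs_x/\sqrt{\delta_S}$ from \eqref{a_x} this gives $\us_x=-\sqrt{\delta_S}\,a_x$. Hence
$-\int_\R a\us_x p(v|\ov)\,dx=\sqrt{\delta_S}\int_\R a\,a_x\,p(v|\ov)\,dx$, and since $1\le a\le 2$ this is bounded by $2\sqrt{\delta_S}\int_\R a_x p(v|\ov)\,dx$. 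So I reduce everything to estimating
\[
\sqrt{\delta_S}\int_\R a\,a_x\,p(v|\ov)\,dx-\sigma\int_\R a_x Q(v|\ov)\,dx
\]
from above by the claimed right-hand side, integrand by integrand against $a_x\,dx$.

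\textbf{Step 2: Expand the relative quantities in $q=p(v)-p(\ov)$.} Apply part (3) of Lemma~\ref{lem : Estimate-relative} with $\bar v=\ov$ (legitimate once $\delta_0$ is small, since $|p(\ov)-p(v_+)|$ and $|p(v)-p(\ov)|$ are $O(\delta_0+\e_1)$ on the support of $a_x$, using the far-field structure and the smallness hypothesis of Proposition~\ref{apriori-estimate}). This yields
\[
p(v|\ov)\le\Big(\tfrac{\gamma+1}{2\gamma}\tfrac{1}{p(\ov)}+C\delta_0\Big)q^2,\qquad
Q(v|\ov)\ge\tfrac{p(\ov)^{-1/\gamma-1}}{2\gamma}q^2-\tfrac{1+\gamma}{3\gamma^2}p(\ov)^{-1/\gamma-2}q^3 .
\]
Since $p(\ov)^{-1/\gamma-1}/(2\gamma)\ge c>0$, the negative cubic term is absorbed into the quadratic one up to a term $+C\int a_x|q|^3\,dx$, which is exactly one of the terms allowed on the right-hand side.

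\textbf{Step 3: Match the leading constants via $\sigma_m$.} Insert the expansions into Step~1's integrand. The surviving quadratic part is, pointwise,
\[
\Big(2\sqrt{\delta_S}\cdot\tfrac{\gamma+1}{2\gamma p(\ov)}-\sigma\cdot\tfrac{p(\ov)^{-1/\gamma-1}}{2\gamma}\Big)q^2 + (\text{errors}).
\]
Replace $\sigma$ by $\sigma_m$ at the cost of $C\delta_S$ (by \eqref{shock_speed_est}), and replace $p(\ov)^{-1/\gamma-1}/\gamma$ by $1/\sigma_m^2$ at the cost of $C\delta_0$ (by the third line of \eqref{shock_speed_est-2}); similarly $1/p(\ov)$ differs from $p(v_m)^{-1}$ by $C\delta_0$. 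Then the coefficient of $q^2$ becomes $-\tfrac{\sigma_m}{2}\cdot\tfrac{1}{\sigma_m^2}+ O(\sqrt{\delta_S}) + O(\delta_0\sqrt{\delta_S})$... wait, more carefully: the $Q$-term contributes $-\tfrac{\sigma_m}{2\sigma_m^2}q^2=-\tfrac{1}{2\sigma_m}q^2$, which is a strictly negative $O(1)$ coefficient, while the $p(v|\ov)$-term is multiplied by $\sqrt{\delta_S}$ and is therefore lower order. So the $O(1)$ negative part is $-\tfrac{1}{2\sigma_m}\int a_x q^2\,dx$; setting $C_1:=\tfrac{1}{4\sigma_m}$ (or any constant below $\tfrac{1}{2\sigma_m}$) leaves room to absorb the $O(\sqrt{\delta_S}(\sqrt{\delta_S}+\delta_0))$ errors into the stated error term $C\sqrt{\delta_S}(\sqrt{\delta_S}+\delta_0)\int a_x|q|^2\,dx$, and the cubic remainders into $C\int a_x|q|^3\,dx$.

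\textbf{Step 4: Produce the second term on the right-hand side.} The middle term $-\tfrac{\sigma}{2\gamma}\int a_x\big(p(\ov)^{-1/\gamma-1}-p(\vs)^{-1/\gamma-1}\big)q^2\,dx$ is not an error term but is kept explicitly; it arises by \emph{not} fully replacing $p(\ov)$ by $p(\vs)$ in the $Q$-term. Concretely, write $\sigma\,Q(v|\ov)\ge \tfrac{\sigma}{2\gamma}p(\vs)^{-1/\gamma-1}q^2+\tfrac{\sigma}{2\gamma}\big(p(\ov)^{-1/\gamma-1}-p(\vs)^{-1/\gamma-1}\big)q^2 - (\text{cubic})$, use the $p(\vs)$-piece (which pairs with the $\sqrt{\delta_S}$-small $p(v|\ov)$-term and with $-C_1$) to generate $-C_1\int a_x q^2$ plus $O(\sqrt{\delta_S})$ errors as in Step~3 — here one uses the \emph{first two} lines of \eqref{shock_speed_est-2} to relate $p(\vs)^{-1/\gamma-1}/\gamma$ to $1/\sigma_m^2$ with a $C\delta_S$ error — and carry the $\big(p(\ov)^{-1/\gamma-1}-p(\vs)^{-1/\gamma-1}\big)$-piece verbatim to the right-hand side. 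Collecting Steps 1--4 gives \eqref{quadratic estimate}.

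\textbf{Main obstacle.} The only delicate point is the exact bookkeeping of constants in Step~3--4: one must make sure that after using the \emph{upper} bound for $p(v|\ov)$ and the \emph{lower} bound for $Q(v|\ov)$, the $O(1)$ part of the coefficient of $q^2$ is genuinely negative with a definite gap (so that a fixed $C_1>0$ survives independently of $\delta_R,\delta_S,\delta_0$), and that every replacement $\sigma\rightsquigarrow\sigma_m$, $p(\ov)\rightsquigarrow p(\vs)$ is charged to either the explicitly-kept middle term or to the $O(\sqrt{\delta_S}(\sqrt{\delta_S}+\delta_0))$ error term and never to the good $-C_1$ term. The cubic terms are harmless because $|q|$ is uniformly small, so $\int a_x|q|^3\le C\e\int a_x|q|^2$ could even be absorbed, but the statement keeps it separately, which only makes the bookkeeping easier.
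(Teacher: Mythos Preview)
Your proof is correct and follows essentially the same approach as the paper: rewrite $\us_x=-\sqrt{\delta_S}\,a_x$, apply the upper bound on $p(v|\ov)$ and the lower bound on $Q(v|\ov)$ from Lemma~\ref{lem : Estimate-relative}(3), split $p(\ov)^{-1/\gamma-1}=p(\vs)^{-1/\gamma-1}+\big(p(\ov)^{-1/\gamma-1}-p(\vs)^{-1/\gamma-1}\big)$, and use \eqref{shock_speed_est}--\eqref{shock_speed_est-2} to identify the leading coefficient as $-\tfrac{1}{2\sigma_m}$. The only difference is bookkeeping: the paper uses the sharper bound $a\le 1+\sqrt{\delta_S}$ (rather than $a\le 2$) and records the explicit value $C_1=\tfrac{1}{2}\big(\tfrac{1}{\sigma_m}-\sqrt{\delta_S}\,\tfrac{\gamma+1}{\gamma p(v_m)}\big)$ in \eqref{C_star}, which is needed verbatim in Section~\ref{Est-main-part} when estimating $B_1-\mathcal{G}_2$; your choice $C_1=\tfrac{1}{4\sigma_m}$ proves the lemma as stated but would require adjusting constants downstream.
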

\begin{proof}
    The proof shares a similar estimate with \cite[Lemma 4.3]{HKKL_pre}, except that we consider the composite wave instead of a single shock. Let us define
    \begin{align*}
        I_{21}:=-\int_{\mathbb{R}} a \us_x p(v|\ov) \, dx =\int_{\mathbb{R}}  a \sqrt{\delta_S} a_x p(v | \ov) \,dx, \quad I_{22}:=\sigma \int_{\mathbb{R}} a_x Q(v|\ov) \, dx,
    \end{align*}
	where we use \eqref{a_x}. We use Lemma \ref{lem : Estimate-relative} and 
    \[|p(\ov)-p(v_m)| \le C \left( |\vs-v_m| + |\vr-v_m|\right) \le C \delta_0 \] 
    to estimate $I_{21}$ as
    \begin{align*}
        I_{21} &\le \frac{1}{2} \left( (\sqrt{\delta_S}+\delta_S)  \frac{\gamma+1}{ \gamma } \frac{1}{p(v_m)} \right) \int_\mathbb{R} a_x  |p(v)-p(\ov)|^2 \, dx \\
         &\quad +C (\sqrt{\delta_S}+\delta_S) \delta_0 \int_\mathbb{R}a_x |p(v)-p(\ov)|^2 \, dx +C \int_\mathbb{R} a_x |p(v)-p(\ov)|^3 \, dx.
    \end{align*}
    Next, we use Lemma \ref{lem : Estimate-relative}, \eqref{shock_speed_est}, and \eqref{shock_speed_est-2} to estimate $I_{22}$ as
        \begin{align*}
        -I_{22} &\le -\frac{1}{2 \sigma_m}  \int_\mathbb{R} a_x |p(v)-p(\ov)|^2 \, dx - \frac{\sigma}{2\gamma}\int_\mathbb{R} a_x \left( p(\ov)^{-\frac{1}{\gamma}-1} -p(\vs)^{-\frac{1}{\gamma}-1} \right) |p(v)-p(\ov)|^2\, dx \\ 
        &\quad + C \delta_S \int_\mathbb{R} a_x |p(v)-p(\ov)|^2 \, dx + C \int_\mathbb{R} a_x |p(v)-p(\ov)|^3 \, dx.
        \end{align*}
        Therefore, one can obtain
        \begin{align*}
        \begin{aligned} %\label{I21I22}
        I_{21} -I_{22} 
        &\le - C_1  \int_\R a_x |p(v)-p(\ov)|^2 \, d x  - \frac{\sigma}{2\gamma}\int_\mathbb{R} a_x \left( p(\ov)^{-\frac{1}{\gamma}-1} -p(\vs)^{-\frac{1}{\gamma}-1} \right) |p(v)-p(\ov)|^2\, dx \\ 
        &\quad +C\sqrt{\delta_S}(\sqrt{\delta_S}+\delta_0) \int_\R a_x\big|p(v)-p(\ov)\big|^2 \, d x+C\int_\R a_x\big|p(v)-p(\ov)\big|^3 \, d x,
        \end{aligned}
        \end{align*}
        where
        \begin{equation}\label{C_star}
        C_1 = \frac{1}{2}\left(\frac{1}{\sigma_m}-\sqrt{\delta_S}\frac{\gamma+1}{\gamma}\frac{1}{p(v_m)}\right).
        \end{equation}
    \end{proof}
    Using \eqref{quadratic estimate}, we estimate the first two terms of $\mathcal{J}^{\textup{bad}}$ and the second term of $\mathcal{J}^{\textup{good}}$ as 
    \begin{align*}
        &\int_{\R}a_x (p(v)-p(\ov))(u-\ou)\,dx-\int_{\R}a\us_xp(v|\ov)\,dx-\sigma\int_{\R}a_xQ(v|\ov)\,dx\\
        &\le  \int_{\R} a_x (p(v)-p(\ov))(u-\ou)\,dx - C_1 \int_{\R} a_x |p(v)-p(\ov)|^2\, dx \\ 
        &\quad - \frac{\sigma}{2\gamma}\int_\mathbb{R} a_x \left( p(\ov)^{-\frac{1}{\gamma}-1} -p(\vs)^{-\frac{1}{\gamma}-1} \right) |p(v)-p(\ov)|^2\, dx \\ 
        &\quad +C\sqrt{\delta_S}(\sqrt{\delta_S}+\delta_0) \int_\R a_x\big|p(v)-p(\ov)\big|^2 \, d x+C\int_\R a_x\big|p(v)-p(\ov)\big|^3 \, d x\\
        &=  -C_1\int_{\R} a_x \left| (p(v) -p(\ov))- \frac{u-\ou}{2 C_1} \right|^2 \,dx+ \frac{1}{4C_1} \int_{\R}a_x |u-\ou|^2\,dx \\ 
        &\quad - \frac{\sigma}{2\gamma}\int_\mathbb{R} a_x \left( p(\ov)^{-\frac{1}{\gamma}-1} -p(\vs)^{-\frac{1}{\gamma}-1} \right) |p(v)-p(\ov)|^2\, dx \\ 
        &\quad +C\sqrt{\delta_S}(\sqrt{\delta_S}+\delta_0) \int_\R a_x\big|p(v)-p(\ov)\big|^2 \, d x+C\int_\R a_x\big|p(v)-p(\ov)\big|^3 \, d x.
    \end{align*}

    Using the above estimate, the right-hand side of \eqref{est-weight-rel-ent} can be bounded and decomposed as
    \begin{equation}\label{est-1}
    \frac{d}{dt}\int_{\bbr}a\eta(U|\oU)\,dx \le \dot{X}(t) Y + \sum_{i=1}^6 B_i-\sum_{i=1}^3 \mathcal{G}_i - \mathcal{G}^R - \mathcal{D}
    \end{equation}
  where
    \begin{align*}
    &B_1 :=\frac{1}{4 C_1} \int_\mathbb{R} a_x  |u-\ou|^2 \, dx,\\
    &B_2:= -\frac{\sigma}{2\gamma}\int_\mathbb{R} a_x \left( p(\ov)^{-\frac{1}{\gamma}-1} -p(\vs)^{-\frac{1}{\gamma}-1} \right) |p(v)-p(\ov)|^2\, dx \\
    &\hspace{2cm} + C\sqrt{\delta_S}(\sqrt{\delta_S}+\delta_0) \int_\R a_x|p(v)-p(\ov)|^2 \, d x 
    +C\int_\R a_x|p(v)-p(\ov)|^3 \, d x,\\
    &B_3:=-\int_\mathbb{R} a_x \left( \frac{ \mu (u-\ou)  (u-\ou)_x}{v}+  \frac{\sqrt{\kappa}  (u-\ou) (w-\ow)_x }{v^{5/2}} -  \frac{\sqrt{\kappa} (w-\ow) (u-\ou)_x}{v^{5/2}}\right) \, dx, \\
    &B_4:=-\int_\mathbb{R} a_x \left( (u-\ou) \left(\left( \frac{\mu}{v}-\frac{\m}{\ov} \right) \ou_x+  \left( \frac{\sqrt{\kappa}}{v^{5/2}}-\frac{\sqrt{\ok}}{\ov^{5/2}} \right) \ow_x \right) -(w-\ow)  \left( \frac{\sqrt{\kappa}}{v^{5/2}}-\frac{\sqrt{\ok}}{\ov^{5/2}} \right) \ou_x \right) \, dx, \\
    &B_5:=-\int_\mathbb{R} a \left( (u-\ou)_x\left(\left( \frac{\mu}{v}-\frac{\m}{\ov} \right) \ou_x + \left( \frac{\sqrt{\kappa}}{v^{5/2}}-\frac{\sqrt{\ok}}{\ov^{5/2}} \right) \ow_x\right)-(w-\ow)_x \left( \frac{\sqrt{\kappa}}{v^{5/2}}-\frac{\sqrt{\ok}}{\ov^{5/2}} \right) \ou_x\right) \, dx, \\
   &B_6:=-\int_\mathbb{R} a \left( (u-\ou) Q_1 + (w-\ow) Q_2 \right) \, dx,
    \end{align*} 
and
    \begin{align*}
    &\mathcal{G}_1:=C_1 \int_\mathbb{R} a_x \left|p(v)-p(\ov)-\frac{u-\ou}{2C_1} \right|^2 \, d x, \quad \mathcal{G}_2:=\frac{\sigma}{2} \int_\mathbb{R} a_x |u-\ou|^2  \, dx,\quad \mathcal{G}_3:=\frac{\sigma}{2} \int_\mathbb{R} a_x |w-\ow|^2 \, dx,\\
    &\mathcal{G}^R:=\int_\mathbb{R} a \ur_x p(v|\ov) \, dx,\quad \mathcal{D}:=  \int_\mathbb{R} a \frac{\mu |(u-\ou)_x|^2}{v} \,dx.
    \end{align*}
    Moreover, recall that
    \begin{align*}
    Y&=-\int_{\bbr} a_x \eta(U|\oU)\,dx +\int_\bbr a D^2\eta(\oU)U^S_x(U-\oU)\,dx\\
    & = -\int_{\bbr} a_x \frac{|u-\ou|^2}{2} \, dx -\int_\mathbb{R} a_x Q(v|\ov)\, dx -\int_\mathbb{R} a_x\frac{|w-\ow|^2}{2}\,dx \\ 
    &\quad +\int_{\bbr} a \us_x (u-\ou)\,dx -\int_{\bbr} a p'(\ov)\vs_x (v-\ov)\,dx+\int_\mathbb{R} a \ws_x(w-\ow) \, dx.
    \end{align*}
    Therefore, we also decompose $Y$ as $Y= \sum_{i=1}^6 Y_{i}$, where
    \begin{align*}
    Y_1 &:=\int_\mathbb{R} a \us_x (u-\ou) \,dx,\quad Y_2:=\frac{1}{\sigma}\int_\mathbb{R} a p'(\vs)\vs_x (u-\ou) \,dx,\\
    Y_3 &:=-\int_\mathbb{R} a (p'(\ov)-p'(\vs)) \vs_x (v-\ov) \, dx, \quad Y_4 :=-\int_\mathbb{R} a p'(\vs)\vs_x \left(v-\ov+\frac{u-\ou}{\sigma}\right) \, dx, \\
    Y_5 &:=-\int_{\mathbb{R}} a_x \left( \frac{|u-\ou|^2}{2} + Q(v|\ov) +\frac{|w-\ow|^2}{2} \right) \, dx,\quad  Y_6 := \int_\mathbb{R} a \ws_x (w-\ow) \, dx.
    \end{align*}
    Finally, since the shift function $X(t)$ is defined as \eqref{ODE_X}, which can be expressed as  
    \begin{equation*}
    \dot{X} = -\frac{M}{\delta_S}(Y_{1}+Y_{2}),\quad X(0)=0,
    \end{equation*}
    the term $\dot{X}Y$ in \eqref{est-1} becomes
    \[\dot{X}Y= -\frac{\delta_S}{M}|\dot{X}|^2+\dot{X}\sum_{i=3}^6Y_{i}\le -\frac{3\delta_S}{4M}|\dot{X}|^2+\frac{C}{\delta_S}\sum_{i=3}^6 |Y_i|^2,\]
    where we used Young's inequality. To sum up, the right-hand side of \eqref{est-1} can be decomposed as
    \begin{align}
    \begin{aligned}\label{est}
    \frac{d}{dt}\int_{\bbr}a\eta(U|\oU)\,dx 
    &\le \underbrace{-\frac{\delta_S}{2M}|\dot{X}|^2 + B_1 -\mathcal{G}_2-\frac{3}{4}\mathcal{D}}_{=: \mathcal{R}_1}\\
    &\quad \underbrace{-\frac{\delta_S}{4M}|\dot{X}|^2 + \frac{C}{\delta_S}\sum_{i=3}^7Y_{i} +\sum_{i=2}^7B_i -\mathcal{G}_1-\mathcal{G}_3-\mathcal{G}^R-\frac{1}{4}\mathcal{D}}_{=:\mathcal{R}_2},
    \end{aligned}
    \end{align}
    where the term $\mathcal{R}_1$ is the leading order term and the term $\mathcal{R}_2$ is the high-order term.

    \subsection{Estimate of $\mathcal{R}_1$} \label{Est-main-part} 
	The estimate of the leading order term $\mathcal{R}_1$ is now well-established in the previous literature \cite{HKKL_pre,KVW23}, using the Poincar\'e-type (or Hardy-Legendre-type) inequality. Therefore, we present the main steps and idea of estimating $\mathcal{R}_1$ and refer to the previous literature for the details. 
	
	We start with defining a new variable $y$ for each fixed time $t$ as
	\begin{equation*} 
	y:=\frac{u_m -\us(x-\sigma t-X(t))}{\delta_S},
	\end{equation*}
	which satisfies
	\begin{equation*}
	\frac{d y}{d x} = -\frac{\us_x (x-\sigma t-X(t))}{\delta_S} >0,\quad\mbox{and}\quad\lim_{x\to -\infty} y=0, \quad \lim_{x \to \infty}y=1.
	\end{equation*}
	Then, for any $f:[0,1] \to \R$ with $\int_0^1 y(1-y) |f'|^2 \, dy <\infty$, the following Poincar\'e-type inequality holds \cite[Lemma 2.9]{KV21}:
	\begin{equation}\label{eq:Hardy-Legendre}
		\int_0^1 \left| f-\int_0^1 f \,dy \right|^2 dy \le \frac{1}{2} \int_0^1 y(1-y)|f'|^2 \,dy.
	\end{equation}
	We will apply \eqref{eq:Hardy-Legendre} to a particular function $f:[0,1]\to\R$ defined as
	\begin{equation*} 
	f:=\left(u(t,\cdot)-\left( \ur(t,\cdot)+\us(\cdot-\sigma t-X(t) )-u_m \right)   \right)\circ y^{-1},
	\end{equation*}
	that is $f(y)=u(t,x)-\ou(t,x)$. Below, we represent the terms in $\mathcal{R}_1$ by using $f$ and $y$, and then use \eqref{eq:Hardy-Legendre} to derive the estimate \eqref{R1} on $\mathcal{R}_1$ below.\\
	
	\noindent $\bullet$  (Estimate of $\frac{\delta_S}{2M}|\dot{X}|^2$): 
	We use the change of variables for $y$ to represent $Y_1$ and $Y_2$ as 
 \begin{align*}
     &Y_1=\int_{\R} a \us_x (u-\ou)\,dx
	=-\delta_S \int_0^1 (1+\sqrt{\delta_S}\, y) f \,dy \\
     &Y_2=-\frac{1}{\sigma^2} \int_{\mathbb{R}} a p'(\vs)\us_x (u-\ou) \, d x=\frac{\delta_S}{\sigma^2} \int_0^1(1+\sqrt{\delta_S}\, y) p'(\vs) f \, dy.
 \end{align*}
	Then, using \eqref{shock_speed_est} and \eqref{shock_speed_est-2} with $\dot{X}=-\frac{M}{\delta_S}(Y_1+Y_2)$, one has
    \begin{align*}
    \left| \dot{X} - 2 M \int_0^1 f \, dy \right| \le \frac{M}{\delta_S} \sum_{i=1}^2 \left| Y_i +\delta_S \int_0^1 f \, dy \right| \le C \sqrt{\delta_S} \int_0^1 |f| \, dy\le C\sqrt{\delta_S}\left(\int_0^1 |f|^2\,dy\right)^{1/2}.
    \end{align*}
   From the above inequality, we obtain
    \begin{equation}\label{est-dotX}
    -\frac{\delta_S}{2M} |\dot{X}|^2 \le -M \delta_S \left(\int_0^1 f \, dy \right)^2 +C \delta_S^2 \int_0^1 |f|^2 \, dy.
    \end{equation}
	
	\noindent $\bullet$ (Estimates of $B_1$ and  $\mathcal{G}_2$):
	Recalling the definition of $B_1$ and $\mathcal{G}_2$, we obtain
	\begin{align*}
	B_1-\mathcal{G}_2=\left(\frac{1}{4C_1} -\frac{\sigma}{2} \right) \int_\mathbb{R} a_x |u-\ou|^2 \, d x=\sqrt{\delta_S}\left(\frac{1}{4C_1}-\frac{\sigma}{2}\right)\int_0^1 |f|^2\,dy,
	\end{align*}
	where $C_1$ defined in \eqref{C_star} can be written as
	\begin{equation*}
	C_1=\frac{1}{2\sigma_m} -\sqrt{\delta_S}\frac{\gamma+1}{2\gamma p(v_m)}=\frac{1}{2 \sigma_m} -\sqrt{\delta_S}\alpha_m \sigma_m, \quad \alpha_m=\frac{\gamma+1}{2 \gamma \sigma_m p(v_m)}.
	\end{equation*}
	 On the other hand, using \eqref{shock_speed_est} and \eqref{shock_speed_est-2}, we obtain the following estimate:
	\begin{align*}
	\sqrt{\delta_S}\left(\frac{1}{4C_*}-\frac{\sigma}{2}\right)
	&\le  \delta_S\sigma_m^3 \alpha_m  +C \delta_S^{3/2},
	\end{align*}
	and consequently,
	\begin{equation}\label{R1.2}
	B_1-\mathcal{G}_2 \le  \left( \sigma_m^3 \alpha_m \delta_S +C \delta_S^{3/2} \right) \int_0^1 |f|^2 \, dy. 
	\end{equation}

	\noindent $\bullet$ (Estimate of $\mathcal{D}$):
	Using $a \geq 1$ and change of variables, we estimate $\mathcal{D}$ as
	\begin{equation*}
	\mathcal{D} \geq  \int_\R \frac{\mu}{v}|(u-\ou)_x|^2 dx=\int_0^1 |\partial_y f|^2 \frac{\mu}{v} \left(\frac{dy}{d x} \right) dy.
	\end{equation*}
	On the other hand, it comes from \cite[Appendix A]{HKKL_pre} that the following estimate holds:
	\begin{equation}\label{Diffusion}
	\left|   \frac{1}{y(1-y)} \frac{ \ms}{\vs} \left( \frac{dy}{dx} \right)-\frac{\sigma}{2 \sigma_m}\frac{\delta_S v''(p_m)}{ |v'(p_m)|^2 } \right| \le C \delta_S^2.
	\end{equation}
	Using \eqref{Diffusion} and the estimate
	\begin{equation*}
    \begin{aligned}
	\left| \frac{\mu}{v}\frac{\vs}{\ms}-1 \right| \le C |\vs-v| \le \left( |\vs-\ov|+|\ov-v|\right) \le C (\delta_0 + \varepsilon_1),
    \end{aligned}
	\end{equation*}
	we have
	\begin{align}
	\begin{aligned}\label{estimate in diffusion}
	&\left|\frac{\mu}{v}\left(\frac{dy}{dx}\right)-\frac{\sigma}{2\sigma_m}\frac{\delta_S v''(p_m)}{|v'(p_m)|^2}y(1-y)\right|\\
	&\le \left|\frac{\ms}{v^S}\left(\frac{dy}{dx}\right)\left(\frac{\mu}{v}\frac{v^S}{\ms}-1\right)\right|+ \left|\frac{\ms}{v^S}\left(\frac{dy}{dx}\right)-\frac{\sigma}{2\sigma_m}\frac{\delta_S v''(p_m)}{|v'(p_m)|^2}y(1-y)\right|\\
	&\le C\delta_S(\delta_0+\e_1)y(1-y)+C\delta_S^2y(1-y)\le C\delta_S(\delta_0+\e_1)y(1-y).
	\end{aligned}
	\end{align}
	Combining \eqref{estimate in diffusion}, \eqref{shock_speed_est}, and the equation
 \begin{align*}
	\frac{1}{2 }\frac{v''(p_m)}{|v'(p_m)|^2 }=\frac{1}{2} (1+\gamma) \frac{1}{v_m}=\sigma_m^3 \alpha_m,
	\end{align*}
 we obtain a lower bound for $\mathcal{D}$ as
	\begin{equation} \label{R1.3}
 \begin{aligned}
     \mathcal{D} 
 &\geq  \left(\frac{\sigma}{2 \sigma_m}\frac{\delta_S v''(p_m)}{ |v'(p_m)|^2 }-C\delta_S(\delta_0+\e_1) \right) \int_0^1 y(1-y) \left| \partial_y f \right|^2 dy\\
 &\ge \sigma_m^3 \alpha_m \delta_S (1-C (\delta_0 + \varepsilon_1)) \int_0^1 y(1-y) |\partial_y f|^2 \, dy.
 \end{aligned}
	\end{equation}
	
We now put the estimates \eqref{est-dotX}, \eqref{R1.2} and \eqref{R1.3} together, and using the smallness of $\delta_0$ and $\varepsilon_1$ to obtain
\begin{align*} 
\mathcal{R}_1&= -\frac{\delta_S}{2M}|\dot{X}|^2+B_1-\mathcal{G}_2-\frac{3}{4}\mathcal{D} \\
&\le -M \delta_S \left(\int_0^1 f \, dy \right)^2 + \sigma_m^3 \alpha_m \delta_S \left(   \frac{9}{8} \int_0^1 |f|^2 \, dy -\frac{5}{8}  \int_0^1 y(1-y) |\partial_y f|^2 \, dy \right).
\end{align*}
Then, we apply the Poincar\'e-type inequality \eqref{eq:Hardy-Legendre}, together with the identity
\[\int_0^1 |f-\overline{f}|^2 \, dy=\int_0^1 |f|^2 \, dy-\overline{f}^2, \quad \overline{f}:=\int_0^1 f \, dy,\]
to derive
\begin{align*}
	\mathcal{R}_1 \le -M \delta_S \left(\int_0^1 f \, dy \right)^2 -\frac{\sigma_m^3 \alpha_m \delta_S}{8} \int_0^1 |f|^2 \, dy + \frac{5 \sigma_m^3 \alpha_m \delta_S}{4} \left(\int_0^1 f\,dy\right)^2.
\end{align*}
Finally, we choose $M=\dfrac{5 \sigma_m^3 \alpha_m}{4}$ so that we close the estimate on $\mathcal{R}_1$ as 
\begin{equation}\label{R1}
	\mathcal{R}_1 \le  -\frac{\sigma_m^3 \alpha_m }{8} \int_0^1 |f|^2 \, dy = -\frac{\sigma_m^3\alpha_m}{8}\int_{\R}|u^S_x||u-\ou|^2\,dx=-C_2 G^S_u,
\end{equation}
where
\begin{equation*}
	\begin{aligned}
	G^S_u :=  \int_\mathbb{R} |\us_x| |u-\ou|^2 \, dx, \quad C_2:=\frac{\sigma_m^3 \alpha_m }{8}.
	\end{aligned}
\end{equation*}
\subsection{Estimate of $\mathcal{R}_2$} \label{Est-Remaining-terms} 
Next, we derive the estimate on the high-order terms. Before we estimate $\mathcal{R}_2$, we extract a new good term by using $\mathcal{G}_1$ and $G^S_u$.
\begin{lemma} There exists a positive constant $C_3$ such that
    \begin{align} \label{C_2}
        -\frac{1}{2}( \mathcal{G}_1 +C_2 G^S_u ) \le -C_3 G^S_v, 
    \end{align}
where 
\begin{align*} % \label{G^S_v}
    G^S_v:=\int_\mathbb{R}|\us_x||v-\ov|^2 \, dx .
\end{align*}
\end{lemma}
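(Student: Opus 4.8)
The plan is to bound $G^S_v$ from above by an arbitrarily small multiple of $\mathcal{G}_1+C_2 G^S_u$, exploiting the fact that on the range of values attained by $v$ and $\ov$ the pressure $v\mapsto p(v)$ is bi-Lipschitz, so that $|v-\ov|$ and $|p(v)-p(\ov)|$ are comparable. First I would put the three functionals on a common footing: by \eqref{a_x} and the monotonicity $(\us)'<0$ of Lemma~\ref{lem:shock-property} we have $|\us_x|=-\us_x=\sqrt{\delta_S}\,a_x$, so that
\begin{align*}
G^S_v=\sqrt{\delta_S}\int_\R a_x|v-\ov|^2\,dx,\qquad
G^S_u=\sqrt{\delta_S}\int_\R a_x|u-\ou|^2\,dx,\qquad
\mathcal{G}_1=C_1\int_\R a_x\Bigl|p(v)-p(\ov)-\tfrac{u-\ou}{2C_1}\Bigr|^2\,dx.
\end{align*}

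Next, under the a priori smallness \eqref{smallness}, Sobolev embedding together with the uniform bounds on the wave profiles (Lemma~\ref{lem:rarefaction_property} and Lemma~\ref{lem:shock-property}) confines both $v$ and $\ov$ to a fixed compact subinterval of $(0,\infty)$ depending only on $(v_+,u_+)$; since $p'(v)=-\gamma v^{-\gamma-1}$ is continuous and bounded away from $0$ there, the mean value theorem gives a constant $C_p>0$, independent of $\delta_0,\delta_R,\e_1$, with $|v-\ov|^2\le C_p|p(v)-p(\ov)|^2$ (this can also be obtained by chaining parts (1) and (3) of Lemma~\ref{lem : Estimate-relative}). Combining this with the elementary inequality $|p(v)-p(\ov)|^2\le 2\bigl|p(v)-p(\ov)-\tfrac{u-\ou}{2C_1}\bigr|^2+\tfrac{1}{2C_1^2}|u-\ou|^2$, multiplying by $a_x$ and integrating over $\R$, I obtain
\begin{align*}
G^S_v\le \frac{2C_p\sqrt{\delta_S}}{C_1}\,\mathcal{G}_1+\frac{C_p}{2C_1^2}\,G^S_u
=\frac{2C_p\sqrt{\delta_S}}{C_1}\,\mathcal{G}_1+\frac{C_p}{2C_1^2 C_2}\bigl(C_2 G^S_u\bigr).
\end{align*}

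Finally, since $\mathcal{G}_1\ge 0$ and $C_2 G^S_u\ge 0$, the right-hand side is at most $\max\bigl\{\tfrac{2C_p\sqrt{\delta_S}}{C_1},\tfrac{C_p}{2C_1^2 C_2}\bigr\}(\mathcal{G}_1+C_2 G^S_u)$. From \eqref{C_star}, $C_1\ge\tfrac{1}{4\sigma_m}$ once $\delta_S$ is small, so $C_1$ is bounded below by a positive constant depending only on $(v_+,u_+)$, and $C_2=\sigma_m^3\alpha_m/8$ is such a constant too; hence for $\delta_S$ small the maximum equals $C_p/(2C_1^2 C_2)\le 8\sigma_m^2 C_p/C_2$, and setting $C_3:=C_2/(16\sigma_m^2 C_p)$ yields $C_3 G^S_v\le\tfrac12(\mathcal{G}_1+C_2 G^S_u)$, which is precisely \eqref{C_2}. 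The only point requiring care is the uniformity in $\delta_0,\delta_R,\delta_S,\e_1$ of the constants $C_p$ and $C_1$, which is exactly what makes $C_3$ a genuine $O(1)$ constant as needed for Proposition~\ref{apriori-estimate}; beyond this bookkeeping there is no analytic difficulty.
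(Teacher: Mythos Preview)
Your proof is correct and follows essentially the same approach as the paper: both arguments use the equivalence $|v-\ov|\sim|p(v)-p(\ov)|$, the relation $|\us_x|=\sqrt{\delta_S}\,a_x$, and the splitting $|p(v)-p(\ov)|^2\le C\bigl|p(v)-p(\ov)-\tfrac{u-\ou}{2C_1}\bigr|^2+C|u-\ou|^2$ to bound $G^S_v$ by $C\sqrt{\delta_S}\,\mathcal{G}_1+CG^S_u$, after which smallness of $\delta_S$ gives the $O(1)$ constant $C_3$. Your version is simply more explicit in tracking the constants.
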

\begin{proof}
    Since $|v-\ov|\sim|p(v)-p(\ov)|$, and $|\vs_x| \sim \sqrt{\delta_S}|a_x|$, we observe that there exists a positive constant $C$ such that 
    \begin{align*}
        G^S_v &\le C \int_\mathbb{R} |\vs_x| \left|p(v)-p(\ov)\right|^2 \, dx  \\ 
        &\le C  \int_\mathbb{R} |\vs_x| \left|p(v)-p(\ov) - \frac{u-\ou}{2C_1}\right|^2 \, dx +C  \int_\mathbb{R} |\vs_x| \left|u-\ou\right|^2 \, dx \\ 
        &\le C \sqrt{\delta_S} \mathcal{G}_1 + C G^S_u.
   \end{align*}
Considering the smallness of $\sqrt{\delta_S}$, one can choose small enough $C_3$, but independent of $\delta_0$ and $\e_1$ such that \eqref{C_2} holds.
\end{proof}
    
We now substitute \eqref{R1} and \eqref{C_2} into \eqref{est} to obtain
\begin{equation}\label{est-R}
    \begin{aligned}{}
    \frac{d}{dt} \int_\mathbb{R} a \eta(U|\oU) \, dx &\le \frac{C}{\delta_S}\sum_{i=3}^7 |Y_i|^2 +\sum_{i=2}^7B_i 
     -\frac{\delta_S}{4M} |\dot{X}|^2 -\frac{C_2}{2}G^S_u -C_3 G^S_v -\frac{1}{2}\mathcal{G}_1-\mathcal{G}_3 -\mathcal{G}^R-\frac{1}{4}\mathcal{D}.
    \end{aligned}
\end{equation}

Therefore, to close the estimate of the weighted relative entropy, it suffices to control the terms $|Y_i|^2$ and  $B_i$ in $\mathcal{R}_2$. To this end, we need the following lemma. (See also \cite{HKKL_pre})

\begin{lemma} 
\label{lem: Useful bad terms estimates} There exists a positive constant $C$ such that for $p \ge 0$, $q \ge \frac{1}{3}$,
\begin{equation*} %\label{useful bad term estimates}
    \begin{aligned}
    &\int_\mathbb{R} (|\vs_x|+|\us_x|)^{1+p} \left( \left|u-\ou\right|^2 +\left| v-\ov \right|^2 + \left| p(v)-p(\ov) \right|^2 \right) \, dx \le C \delta_S^{2p}  \left(G^S_u +G^S_v\right) , \\ 
    &\int_\mathbb{R} (|\vr_x|+|\ur_x|)^{1+p} \left(  \left| v-\ov \right|^2 +\left| p(v)-p(\ov) \right|^2 \right) \, dx  \le C \delta_R^p    \mathcal{G}^R  , \\
    &\int_\mathbb{R} (|\ov_x|+|\ou_x|)^{1+p} \left(  \left| v-\ov \right|^2 +\left| p(v)-p(\ov) \right|^2 \right) \, dx   \le C \delta_S^{2p} G^S_v +C \delta_R^p  \mathcal{G}^R ,\\
    &\int_\mathbb{R} |a_x|^{1+q} \left(|u-\ou|^2 + \left|v-\ov\right|^2 \right)\, dx \le C \delta_S^{\frac{3q-1}{2}}  \left( G^S_u + G^S_v \right) .
    \end{aligned}
\end{equation*}
\end{lemma}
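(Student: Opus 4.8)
The plan is to reduce every factor containing a wave derivative to its $L^\infty$-bound, peel off exactly one surviving power of that derivative to rebuild the good terms $G^S_u$, $G^S_v$, $\mathcal{G}^R$, and keep careful track of the powers of $\delta_S$ and $\delta_R$ so that the leftover scalar prefactors carry non-negative exponents. The inputs I would use throughout are: the pointwise bounds $|\us_x|\le C\delta_S^2$, $|\vs_x|\le C\delta_S^2$ and the two-sided equivalence $C^{-1}|\vs_x|\le|\us_x|\le C|\vs_x|$ from Lemma~\ref{lem:shock-property}; the pointwise bounds $|\ur_x|+|\vr_x|\le C\delta_R$ and the equivalence $|\vr_x|\sim|\ur_x|$ together with $\ur_x>0$ from Lemma~\ref{lem:rarefaction_property}; the identity $|a_x|=|\us_x|/\sqrt{\delta_S}=\sigma|\vs_x|/\sqrt{\delta_S}$ from \eqref{a_x}; and the elementary estimates $|p(v)-p(\ov)|\le C|v-\ov|$ and $C^{-1}|v-\ov|^2\le p(v|\ov)\le C|v-\ov|^2$, valid because $v$ and $\ov$ stay in a fixed compact subset of $(0,\infty)$ under \eqref{smallness} and the profile bounds of Lemmas~\ref{lem:rarefaction_property}--\ref{lem:shock-property}. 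In particular these give $\int_{\R}\ur_x|v-\ov|^2\,dx\le C\int_{\R}a\,\ur_x\,p(v|\ov)\,dx=C\mathcal{G}^R$, since $a\ge1$.

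For the first estimate I would write $(|\vs_x|+|\us_x|)^{1+p}\le C|\us_x|^{1+p}\le C\delta_S^{2p}|\us_x|$, then integrate this against $|u-\ou|^2$, $|v-\ov|^2$, and $|p(v)-p(\ov)|^2\le C|v-\ov|^2$ to obtain $C\delta_S^{2p}(G^S_u+G^S_v)$. The second estimate is the same argument with $(|\vr_x|+|\ur_x|)^{1+p}\le C\,\delta_R^{p}\,\ur_x$ and only the $v$- and $p$-terms present, yielding $C\delta_R^{p}\,\mathcal{G}^R$. For the third estimate I would use $\ov_x=\vr_x+\vs_x$ and $\ou_x=\ur_x+\us_x$ to split $(|\ov_x|+|\ou_x|)^{1+p}\le C(|\vr_x|+|\ur_x|)^{1+p}+C(|\vs_x|+|\us_x|)^{1+p}$, and then apply the first two estimates termwise (keeping only the $v$- and $p$-terms in the shock part), which produces $C\delta_S^{2p}G^S_v+C\delta_R^{p}\,\mathcal{G}^R$.

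The fourth estimate is the only one that genuinely uses the hypothesis $q\ge\frac13$. Writing $|a_x|^{1+q}=|\us_x|^{1+q}\,\delta_S^{-(1+q)/2}$ and using $|\us_x|^{q}\le C\delta_S^{2q}$, one gets
\[
|a_x|^{1+q}\le C\,\delta_S^{\,2q-\frac{1+q}{2}}\,|\us_x|=C\,\delta_S^{\,\frac{3q-1}{2}}\,|\us_x|,
\]
and integrating against $|u-\ou|^2+|v-\ov|^2$ gives $C\delta_S^{(3q-1)/2}(G^S_u+G^S_v)$; the restriction $q\ge\frac13$ is precisely what keeps the exponent $\frac{3q-1}{2}$ non-negative, so that the prefactor remains bounded (indeed small). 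I do not anticipate a genuine obstacle in this lemma — it is essentially bookkeeping on powers of $\delta_S$ and $\delta_R$. The one point that deserves an explicit line of justification is the uniform validity of the equivalences $|\vs_x|\sim|\us_x|$, $|\vr_x|\sim|\ur_x|$, and $|v-\ov|\sim|p(v)-p(\ov)|\sim\sqrt{p(v|\ov)}$, which follows from the $L^\infty$-smallness of the perturbation in \eqref{smallness} together with the wave-profile bounds of Lemmas~\ref{lem:rarefaction_property} and \ref{lem:shock-property} confining $v$, $\ov$, $\vr$, $\vs$ to a fixed compact interval in $(0,\infty)$.
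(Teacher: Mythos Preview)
Your proposal is correct and follows essentially the same approach as the paper's proof: peel off $p$ (respectively $q$) powers of the wave derivative using the $L^\infty$-bounds $|\us_x|\le C\delta_S^2$, $|\ur_x|\le C\delta_R$, and $|a_x|=|\us_x|/\sqrt{\delta_S}$, then use the equivalences $|\vs_x|\sim|\us_x|$, $|\vr_x|\sim|\ur_x|$, and $|v-\ov|^2\sim|p(v)-p(\ov)|^2\sim p(v|\ov)$ to reconstruct the good terms. The paper handles the third inequality by the same splitting via $\ou_x=\ur_x+\us_x$ and derives the fourth directly from the first via $|a_x|\sim|\us_x|/\sqrt{\delta_S}$, exactly as you do.
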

\begin{proof} By \eqref{shock-property} in Lemma \ref{lem:shock-property}, we recall that $|\vs_x| \sim |\us_x|$ and $\|\us_x\|_{L^\infty} \le C \delta_S^2$. Also,  we recall the equivalence between $|v-\ov|$ and $|p(v)-p(\ov)|$ in Lemma \ref{lem : Estimate-relative}. Then, the fist inequality can be directly obtained as
    \begin{align*}
        &\int_\mathbb{R} (|\vs_x|+|\us_x|)^{1+p} \left( \left|u-\ou\right|^2 +\left| v-\ov \right|^2 + \left| p(v)-p(\ov) \right|^2 \right) \, dx \\ 
        &\le \|\us_x\|_{L^\infty}^p \int_\mathbb{R} |\us_x| \left( |u-\ou|^2 + |v-\ov|^2 \right) \, dx\le C \delta_S^{2p}  \left(G^S_u +G^S_v\right).
    \end{align*}

The proof of the second inequality follows a similar approach as the first one. It relies on the equivalence between $|v-\ov|$, $|p(v)-p(\ov)|$, and $p(v|\ov)$. Additionally, since $\| \ur_x\|_{L^\infty} \le \delta_R$ from the Lemma \ref{lem:rarefaction_property}, we have
\begin{align*}
     &\int_\mathbb{R} (|\vr_x|+|\ur_x|)^{1+p} \left(  \left| v-\ov \right|^2 +\left| p(v)-p(\ov) \right|^2 \right) \, dx \\ 
     &\le C \|\ur_x\|_{L^\infty}^p \int_\mathbb{R}  \ur_x p(v|\ov) \, dx \le C \delta_R^p \mathcal{G}^R.
\end{align*}
The third inequality can be obtained by combining the first and second inequalities, using the $\ou_x=\ur_x+\us_x$. Finally, the fourth inequality follows immediately from the first inequality by using $|a_x| \sim \frac{|\us_x|}{\sqrt{\delta_S}}$ in \eqref{a}.
\end{proof}

We start with the estimates on $Y_i$.\\

\noindent $\bullet$ (Estimate of $\frac{C}{\delta_S} |Y_i|^2$ for $i=3,4,5,6$): We use $|p'(v)-p'(\vs)| \le |\vr-v_m| \le \delta_R  $ to estimate $Y_3$ as
\begin{align*}
    |Y_3| &\le C \int_\mathbb{R} \left| \vr-v_m \right| |\vs_x| |v-\ov| \, dx  \le \delta_R \int_\mathbb{R} |\vs_x||v-\ov| \, dx \\ 
    &\le C \delta_R \sqrt{\int_\mathbb{R} |\vs_x| \, dx}\sqrt{\int_\mathbb{R} |\vs_x| |v-\ov|^2 \, dx } \le C \delta_R \sqrt{\delta_S}\sqrt{G^S_v }.
\end{align*}
For $Y_4$, we decompose it into $Y_{41}$ and $Y_{42}$ as
\begin{align*}
     Y_4 
     &=-\int_\mathbb{R} a p'(\vs)\vs_x \left(v-\ov+\frac{2C_1}{\sigma} \left( p(v)-p(\ov) \right) \right) \,dx \\&\quad -\frac{2C_1}{\sigma}\int_\mathbb{R} a p'(\vs) \vs_x \left(p(v)-p(\ov) - \frac{u-\ou}{2C_1}\right) \, dx =:Y_{41}+Y_{42}.
\end{align*}
Then, by using the definition of $C_1$ in \eqref{C_star}, we rewrite $Y_{41}$ as
\begin{align*}
    Y_{41} &= - \int_\mathbb{R} ap'(\vs)\vs_x \left( v-\ov+\frac{p(v)-p(\ov)}{\sigma\sigma_m}-\frac{2}{\sigma}\alpha_m\sigma_m \sqrt{\delta_S}(p(v)-p(\ov)) \right) \, dx. 
\end{align*}
Using Taylor expansion of the function $p \mapsto v(p)=p^{-\frac{1}{\gamma}}$, together with \eqref{shock_speed_est} and \eqref{shock_speed_est-2}, we obtain
\[\left| v-\ov+\frac{p(v)-p(\ov)}{\sigma\sigma_m}\right|\le C(\varepsilon_1+\delta_0)|p(v)-p(\ov)|. \]
From this, we estimate $Y_{41}$ as 
\begin{align*}
    |Y_{41}| 
    %&\le C \left( \int_\mathbb{R} |\vs_x| \left| v-\ov+\frac{p(v)-p(\ov)}{\sigma\sigma_m} \right| \, dx + \sqrt{\delta_S}\int_\mathbb{R} |\vs_x| |p(v)-p(\ov)|  \, dx \right) \\
    &\le C (\varepsilon_1+\delta_0+\sqrt{\delta_S}) \int_\mathbb{R} |\vs_x| \left|p(v)-p(\ov)\right| \,dx \\ 
    &\le C (\varepsilon_1+\delta_0+\sqrt{\delta_S}) \sqrt{\int_\mathbb{R} |\vs_x| dx} \sqrt{\int_\mathbb{R} |\vs_x||p(v)-p(\ov)|^2 \, dx} \\
    &\le C(\varepsilon_1+\delta_0+\sqrt{\delta_S})\sqrt{\delta_S} \sqrt{G^S_v} .
\end{align*}
For $Y_{42}$, we use H\"older inequality and \eqref{a_x} to obtain
\begin{align*}
    |Y_{42}| 
    &\le C \sqrt{\int_\mathbb{R} |\vs_x| \, dx} \sqrt{\int_\mathbb{R} |\vs_x| \left| p(v)-p(\ov)-\frac{u-\ou}{2C_1} \right|^2 \, dx} \\ 
    &\le C \sqrt{\delta_S} \sqrt{ \sqrt{\delta_S} \int_\mathbb{R} |a_x| \left| p(v)-p(\ov)-\frac{u-\ou}{2C_1} \right|^2 \, dx } \le C \delta_S^{3/4} \sqrt{ \mathcal{G}_1 }.
\end{align*}
Next, using $Q(v|\ov) \le C |v-\ov|^2$, we have
\begin{align*}
    |Y_5| \le C\int_\mathbb{R} |a_x| \left( | u-\ou|^2 + |v-\ov|^2 + |w-\ow|^2 \right) dx.
\end{align*}
Then, we use $\|a_x\|_{L^\infty} \le C \delta^{3/2}_S$  and \eqref{smallness} to get the following estimate for $Y_5$:
\begin{align*}
|Y_5|^2 &\le C\e_1^2\delta_S^{3/2}\int_\R|a_x|(|u-\ou|^2+|v-\ov|^2+|w-\ow|^2)\,dx\\
&\le C \varepsilon_1^2 \delta_S\left( \int_\mathbb{R} |\vs_x| \left( |u-\ou|^2 +|v-\ov|^2 \right) dx + \sqrt{\delta_S} \int_\mathbb{R} |a_x| |w-\ow|^2 \, dx \right) \\
& \le C \varepsilon_1^2\delta_S \left( G^S_u+G^S_v + \sqrt{\delta_S} \mathcal{G}_3 \right).
\end{align*}
Finally, we use the relations $|\ws_x| \le C |\vs_x|\le C\sqrt{\delta_S} |a_x|$ and  H\"older inequality to obtain
\begin{align*}
    |Y_{6}| &\le C  \int_\mathbb{R} |\vs_x| |w-\ow| \, dx \le C  \sqrt{\int_\mathbb{R} |\vs_x| \, dx} \sqrt{ \sqrt{\delta_S} \int_\mathbb{R} |a_x| |w-\ow|^2 \, dx} \le C \sqrt{\delta_S} \sqrt{\mathcal{G}_3}. 
\end{align*}
Combining all the estimates of $Y_i$, we conclude that
\begin{equation}\label{est-Y}
    \begin{aligned}
    \frac{C}{\delta_S}\sum_{i=3}^6 |Y_i|^2 &\le C \sqrt{\delta_S} (\mathcal{G}_1 + \mathcal{G}_3) +C \left(\varepsilon_1 +\delta_0 \right) \left( G^S_u+ G^S_v \right).
    \end{aligned}
\end{equation}
	   
\noindent $\bullet$ (Estimate of  $B_i$ for $i=2,\ldots, 6$):  We split $B_2$ as 
\begin{align*}
    B_2 &\le C \int_\mathbb{R} |a_x| |\vr-v_m||p(v)-p(\ov)|^2\, dx +C \sqrt{\delta_S}(\sqrt{\delta_S}+\delta_0)\int_{\mathbb{R}} |a_x| \left|p(v)-p(\ov)\right|^2 \, dx \\ 
    &+ C\int_{\mathbb{R}} |a_x| \left|p(v)-p(\ov)\right|^3 \, dx =:B_{21}+B_{22}+B_{23}.
\end{align*}
Then, we use \eqref{a_x}, interpolation inequality, and Lemma \ref{lem: interaction estimates} to obtain
    \begin{align*}
    |B_{21}| &\le C \int_\mathbb{R} |a_x| |\vr-v_m||p(v)-p(\ov)|^2\, dx\le \frac{C}{\sqrt{\delta_S}}  \norm{p(v)-p(\ov)}_{L^4} \norm{|\vs_x||\vr-v_m|}_{L^2} \\ 
    &\le C \delta_R \delta_S e^{-C \delta_S t} \norm{(p(v)-p(\ov))_x}_{L^2}^{1/2} \norm{p(v)-p(\ov)}_{L^2}^{3/2}  \le C  \varepsilon_1^2 \delta_R \delta_S e^{-C \delta_S t}.
    \end{align*}
    Next, we estimate $B_{22}$ as
    \begin{align*}
    |B_{22}|&\le C \sqrt{\delta_S}(\sqrt{\delta_S}+\delta_0)\int_{\mathbb{R}} |a_x| \left|p(v)-p(\ov)\right|^2 \, dx \le C (\sqrt{\delta_S}+\delta_0)(\mathcal{G}_1+G^S_u).
    \end{align*}
  For $B_{23}$, we use an algebraic inequality $|p|^3\le 8(|p-q|^3+|q|^3)$ and the interpolation inequality to obtain
    \begin{align*}
    |B_{23}|&\le C\int_{\mathbb{R}} |a_x| \left|p(v)-p(\ov)-\frac{u-\ou}{2C_1}\right|^3dx+C\int_{\mathbb{R}} |a_x| \left|u-\ou\right|^3dx \\
    &\le C\varepsilon_1\mathcal{G}_1+C\lVert u-\ou \rVert_{L^\infty}^2 \int_{\mathbb{R}} |a_x||u-\ou| \,dx\\
    &\le C\varepsilon_1\mathcal{G}_1+\frac{C}{\sqrt{\delta_S}} \lVert u-\ou \rVert_{L^2}\lVert (u-\ou)_x \rVert_{L^2}\int_{\mathbb{R}} |\vs_x| |u-\ou|dx \\
    &\le C\varepsilon_1\mathcal{G}_1+\tau\mathcal{D}+\frac{C}{\delta_S}\lVert u-\ou \rVert_{L^2}^2 \int_{\mathbb{R}} |\vs_x| dx\int_{\mathbb{R}} |\vs_x| |u-\ou|^2 dx \\
    &\le C\varepsilon_1\mathcal{G}_1+\tau\mathcal{D}+C\varepsilon_1^2G^S_u,
    \end{align*}
	where $\tau$ is a small constant independent of $\delta_0$ and $\e_1$, which will be chosen later. Next, we estimate $B_3$ as
    \begin{align*}
         B_3
         &\le  C \int_\mathbb{R} |a_x| \Big( |u-\ou| |(u-\ou)_x|+ |w-\ow| |(u-\ou)_x|   \Big) \, dx + \frac{C}{\sqrt{\delta_S}} \int_\mathbb{R} |\vs_x| |u-\ou| |(w-\ow)_x| \, dx \\
         &\le \tau \mathcal{D} + C \int_\mathbb{R} |a_x|^2 \left( |u-\ou|^2+ |w-\ow|^2 \right) \, dx +  \frac{C}{\sqrt{\delta_S}} \left( \int_\mathbb{R} |\vs_x|^{3/2} |u-\ou|^2 \, dx + \int_\mathbb{R} |\vs_x|^{1/2} |(w-\ow)_x|^2 \, dx \right) \\ 
    &\le \tau\mathcal{D} + C \delta_S G^S_u + C \delta_S^{3/2} \mathcal{G}_3 +  C \delta_S^{1/2} G^S_u + C \delta_S^{1/2}  \|(w-\ow)_x\|_{L^2}^2.
    \end{align*}
    Similarly, we use Young's inequality, Lemma \ref{lem: Useful bad terms estimates}, $|\ow_x| \le C |\ov_x| $, and  $|\ov_x| \sim |\ou_x|$ to derive
    \begin{align*}
    B_4 &\le \int_\mathbb{R} |a_x| \Bigg[ |u-\ou| |v-\ov|     |\ou_x|  +|u-\ou| |v-\ov|  | \ow_x|  + |w-\ow| |v-\ov| |\ou_x|   \Bigg] \, dx \\ 
    &\le C \int_\mathbb{R} |a_x|^2 \left(|u-\ou|^2+|w-\ow|^2 \right)\,dx +C\int_\mathbb{R} |\ou_x|^2 |v-\ov|^2 \, dx \\ 
    &\le C (\delta_S G^S_u +  \delta_S^{3/2} \mathcal{G}_3 + \delta_S^2 G^S_v + \delta_R \mathcal{G}^R).
    \end{align*}
 	Next, using $|\ow_x| \le C |\ov_x| \le C |\ou_x|$, we have
    \begin{align*}
    B_5 &\le \int_\mathbb{R}  |(u-\ou)_x| |v-\ov|    |\ou_x|  +  |(u-\ou)_x| |v-\ov| | \ow_x|  + |(w-\ow)_x| |v-\ov| |\ou_x|  \, dx \\
    &\le C \int_\mathbb{R} |\ou_x|^{1/2} (|(u-\ou)_x|^2 + |(w-\ow)_x|^2 ) \, dx + C \int_\mathbb{R} |\ou_x|^{3/2} |v-\ov|^2 \, dx \\ 
    &\le \tau \mathcal{D} + C (\delta_S +\sqrt{\delta_R}) \|(w-\ow)_x\|_{L^2}^2+C \delta_S G^S_v+ C \sqrt{\delta_R} \mathcal{G}^R  \\
    &\le \tau \mathcal{D} + C \sqrt{\delta_0} \|(w-\ow)_x\|_{L^2} + C \sqrt{\delta_0} (G^S_v+\mathcal{G}^R).
    \end{align*}
	To estimate $B_6$, we split
	\[B_6 = -\int_{\R}a(u-\ou)Q_1\,dx -\int_{\R}a(w-\ow)Q_2\,dx=:B_{61}+B_{62}.\]
	We first handle $B_{61}$ as
    \begin{align*}
        B_{61} & \le C \int_\mathbb{R} \left( |Q_1^I| + |Q_1^R| \right) |u-\ou| \, dx \le C  \|Q_1^I\|_{L^2} \| u-\ou \|_{L^2} + C \|Q_1^R \|_{L^1} \| u-\ou \|_{L^\infty}  ,
    \end{align*}
    which, together with \eqref{est-interaction}, implies
    \begin{equation*} %\label{E1}
        \begin{aligned}
            B_{61} &\le C \delta_S \delta_R e^{-C \delta_S t} \|u-\ou\|_{L^2} + C \|Q_1^R\|_{L^1} \|(u-\ou)_x\|_{L^2}^{\frac{1}{2}} \|u-\ou\|_{L^2}^{\frac{1}{2}} \\
            &\le C \delta_S \delta_R e^{-C \delta_S t} \|u-\ou\|_{L^2} + \tau \mathcal{D} + C \|Q_1^R\|_{L^1}^{\frac{4}{3}}  \|u-\ou\|_{L^2}^{\frac{2}{3}}\\
            &\le C \e_1\delta_S \delta_R e^{-C \delta_S t} + \tau\mathcal{D} + C \e_1^{2/3}\|Q_1^R\|_{L^1}^{\frac{4}{3}}.
        \end{aligned}
    \end{equation*}
    Finally, we use \eqref{est-interaction} to estimate $B_{62}$ as
    \begin{align*}
     B_{62}
     &\le C \| w-\ow \|_{L^2} \|Q_2\|_{L^2}\le C \e_1^{2} \delta_R \delta_S^{3/2} e^{-C \delta_S t}.
    \end{align*}
    Gathering all the estimates for $B_i$ and using the smallness of $\delta_0$ and $\e_1$, we conclude that
    \begin{equation}\label{est-B,K}
    \begin{aligned}
    \sum_{i=2}^6 B_i  &\le C\tau\mathcal{D} + C (\sqrt{\delta_0} + \e_1 ) (\mathcal{G}_1 + \mathcal{G}_3 + \mathcal{G}^R+ G^S_u + G^S_v ) \\
    & \quad +C \e_1 \delta_R \delta_S e^{-C \delta_S t} + C \sqrt{\delta_0} \| (w-\ow)_x\|_{L^2}^2 + C \e_1^{2/3}\| Q_1^R \|_{L^1}^{\frac{4}{3}}.
    \end{aligned}
    \end{equation}

    \subsection{Proof of Lemma \ref{Main Lemma}}
    We combine \eqref{est-R}, \eqref{est-Y}, and \eqref{est-B,K}, and then choosing the constant $\tau$ small enough so that $C\tau <\frac{1}{8}$ to obtain
    \begin{align*}
    \frac{d}{dt} \int_\mathbb{R} a \eta (U | \oU ) \, dx &\le -\frac{\delta_S}{4M} |\dot{X}|^2 -\frac{1}{8} \left( \mathcal{G}_1 + \mathcal{G}_3 +\mathcal{G}^R + C_2 G^S_u + C_3 G^S_v + \mathcal{D} \right) \\
     & \quad +C \e_1 \delta_R \delta_S e^{-C \delta_S t} + C \sqrt{\delta_0} \| (w-\ow)_x\|_{L^2}^2 + C\e_1^{2/3} \| Q_1^R \|_{L^1}^{\frac{4}{3}}.
    \end{align*}
    After integrating the above inequality on $[0,t]$ for any $t \le T$, we have
    \begin{align*}
    & \int_\mathbb{R} a(t,x) \eta (U (t,x) | \oU (t,x) ) \, dx + \int_0^t \left( \delta_S |\dot{X}|^2 + \mathcal{G}_1+ \mathcal{G}_3 +\mathcal{G}^R + G^S_u+ G^S_v+ \mathcal{D} \right) \, ds \\ 
    & \quad \le C  \int_\mathbb{R} a(0,x) \eta ( U_0(x)| \oU(0,x)) \, dx + C \sqrt{\delta_0} \int_0^t \|(w-\ow)_x\|_{L^2}^2 + C \e_1 \delta_R    + C\e_1^{2/3} \int_0^t \|Q_1^R\|_{L^1}^{\frac43}\,ds.
    \end{align*}
    To close the estimate, we compute $\int_0^t\|Q_1^R\|_{L^1}^{\frac43}\,ds$ as follows. First, recall that
    \[Q_1^R=-\left(\mr\frac{\ur_x}{\vr} \right)_x-\left(\kr \left(-\frac{\vr_{xx}}{(\vr)^5} + \frac{5(\vr_x)^2}{2(\vr)^6} \right) + (\kr)' \frac{(\vr_x)^2}{2(\vr)^5} \right)_x,\]
    which implies 
    \begin{align*}
    \begin{aligned} %\label{Q1R-est}
    \|Q_1^R\|_{L^1} &\le C \|( |\ur_{xx}| + |\ur_{x}| |\vr_{x}| +|\vr_{xxx}|+|\vr_{xx}| |\vr_{x}| + |\vr_{x}|^3)\|_{L^1}\\
    &\le C \left( \|\ur_{xx}\|_{L^1} + \|(\vr_{x},\ur_{x})\|_{L^2}^2 +\|\vr_{xxx}\|_{L^1} + \|\vr_{x}\|_{L^3}^3 \right).
    \end{aligned}
    \end{align*}
    On the other hand, using Lemma \ref{lem:rarefaction_property}, we have
    \begin{align*}
    &\|\ur_{xx}\|_{L^1} \le C \begin{cases}
    \delta_R\quad &\mbox{if}\,1+t\le \delta_R^{-1}\\
    (1+t)^{-1}\quad&\mbox{if}\,1+t\ge \delta_R^{-1},
    \end{cases} \quad  \|(\vr_{x},\ur_{x})\|_{L^2} \le C \begin{cases}
    \delta_R\quad &\mbox{if}\,1+t\le \delta_R^{-1}\\
    \delta_R^{1/2}(1+t)^{-1}\quad&\mbox{if}\,1+t\ge \delta_R^{-1},
    \end{cases}, \\
    &\|\vr_{xxx}\|_{L^1} \le C \begin{cases}
    \delta_R\quad &\mbox{if}\,1+t\le \delta_R^{-1}\\
    (1+t)^{-1}\quad&\mbox{if}\,1+t\ge \delta_R^{-1},
    \end{cases} \quad  \|\vr_{x}\|_{L^3} \le C \begin{cases}
    \delta_R\quad &\mbox{if}\,1+t\le \delta_R^{-1}\\
    \delta_R^{1/3}(1+t)^{-2/3}\quad&\mbox{if}\,1+t\ge \delta_R^{-1}.
    \end{cases}
    \end{align*}
    Using these bounds, we get
    \[\int_0^\infty \|Q_1^R\|_{L^1}^{4/3}\,ds = \int_0^{\delta_R^{-1}-1}\|Q_1^R\|_{L^1}^{4/3}\,ds+\int_{\delta_R^{-1}-1}^\infty\|Q_1^R\|_{L^1}^{4/3}\,ds \le C (\delta_R^{1/3} + \delta_R^{5/3}+\delta_R^3) \le C \delta_R^{1/3}. \]
    Thus, we finally obtain
    \begin{align*}
    & \int_\mathbb{R} a(t,x) \eta (U (t,x) | \oU (t,x) ) \, dx + \int_0^t \left( \delta_S |\dot{X}|^2 + \mathcal{G}_1+ \mathcal{G}_3 +\mathcal{G}^R + G^S_u+ G^S_v+ \mathcal{D} \right) \, ds \\ 
    & \quad \le C  \int_\mathbb{R} a(0,x) \eta ( U_0(x)| \oU(0,x)) \, dx + C \sqrt{\delta_0} \int_0^t \|(w-\ow)_x\|_{L^2}^2  + C\e_1^{2/3}\delta_R^{1/3}.
    \end{align*}
    Since the weight function $a$ has lower and upper bounds, and the relative entropy is equivalent to the $L^2$-norm of the perturbation, one obtains 
    \begin{align*}
    & \| U-\oU \|_{L^2}^2 + \int_0^t \left( \delta_S |\dot{X}|^2 + \mathcal{G}_1+ \mathcal{G}_3 +\mathcal{G}^R + G^S_u+ G^S_v+ \mathcal{D} \right) \, ds \\ 
    & \quad \le C  \| U_0 -\oU (0, \cdot) \|_{L^2} +C \delta_R^{1/3} + C \sqrt{\delta_0} \int_0^t \|(w-\ow)_x\|_{L^2}^2 .
    \end{align*}
    Finally, since $D \sim \mathcal{D}$, $G_1 \sim \mathcal{G}_1$, $G_3 \sim \mathcal{G}_3$ and $G^R \sim \mathcal{G}^R$ we complete the proof of Lemma \ref{Main Lemma}.

\section{Estimate on the $H^1$-perturbation}\label{sec:high-order}
\setcounter{equation}{0}
In this section, we present high-order estimates on the perturbation, thereby completing the proof of Proposition \ref{apriori-estimate}. The goal of this section is to prove the following estimate:
\begin{equation}
\begin{aligned}\label{eq:H1-est}
	& \norm{(v-\ov)(t,\cdot)}_{L^2}^2 +\norm{(u-\ou)(t,\cdot)}_{H^1}^2+\norm{(w-\ow)(t,\cdot)}_{H^1}^2+\delta_S \int_0^t | \dot{X}(s)|^2 \, d s \\ 
	&\quad +\int_0^t \left( G_1+G_3+G^S_u+G^S_v+G^R \right) \, ds +  \int_0^t \left( D_{u_1} + D_{u_2} + G_{w} + D_{w_1} + D_{w_2} \right)\, ds  \\ 
	& \le C \norm{(v-\ov)(0,\cdot)}_{L^2}^2 +\norm{(u-\ou)(0,\cdot)}_{H^1}^2+\norm{(w-\ow)(0,\cdot)}_{H^1}^2+C\delta_R^{1/3},
\end{aligned}
\end{equation}
which is the same estimate as \eqref{a-priori-1} in Proposition \ref{apriori-estimate}.

Let us briefly explain how we obtain \eqref{eq:H1-est}. To obtain the $H^1$-estimate on $(u-\ou,w-\ow)$, we first need to derive $L^2$-estimates on $(u-\ou)_x$ and $(w-\ow)_x$, which is established in Lemma \ref{lem: est-0}. Next, recall that in Lemma \ref{Main Lemma}, we did not completely close the estimate due to the term $\|(w-\ow)_x\|_{L^2}$. Therefore, in Lemma \ref{lem: est-1}, we extract the good terms that can control $(w-\ow)_x$. Of course, in the above two lemmas, new terms appear that need to be bounded. These terms are completely controlled by using Lemma \ref{lem: est-2}. As the proofs of these lemmas are very long and technical, we postpone their proofs to appendices.

For simplicity, we introduce the following notations for the perturbation:
\begin{equation*} %\label{Notations: phi,psi,omega}
\phi(t,x):=v(t,x)-\ov(t,x),\ \psi(t,x):=u(t,x)-\ou(t,x),\ \omega (t,x):=w(t,x)-\ow(t,x),
\end{equation*}
and
\[\phi_0(x):=\phi(0,x),\quad \psi_0(x):=\psi(0,x),\quad \omega_0(x):=\omega(0,x).\]
Then $(\phi,\psi,\omega)$ satisfies 
\begin{equation} \label{Underlying wave with shift}
\begin{aligned}
&\phi_t-\psi_x=\dot{X} \vs_x,\\
&\psi_t + \left(p(v)-p(\ov) \right)_x = \left( \frac{\mu}{v} u_x-\frac{\m}{\ov} \ou_x \right)_x + \left( \frac{\sqrt{\kappa}}{v^{5/2}} w_x - \frac{\sqrt{\ok}}{\ov^{5/2}} \ow_x\right)_x + \dot{X} \us_x -Q_1,\\
&\omega _t= -\left( \frac{\sqrt{\kappa}}{v^{5/2}} u_x - \frac{\sqrt{\ok}}{\ov^{5/2}} \ou_x \right)_x + \dot{X} \ws_x -Q_2.
\end{aligned}
\end{equation}

We note that the a priori assumption \eqref{smallness} and the Sobolev embedding imply the bound on the $L^\infty$-norms for the perturbations
\begin{equation} \label{eq: L^infty bounds}
	\norm{\phi}_{L^\infty((0,T)\times\R)}+\norm{\phi_x}_{L^\infty((0,T)\times\R)}+\norm{\psi}_{L^\infty((0,T)\times\R)} \le C \varepsilon_1.
\end{equation}
Furthermore, we have
\begin{equation} \label{eq: L^infty bounds-2}
\norm{\ou_x}_{L^\infty((0,T)\times\R)}+\norm{\ov_x}_{L^\infty((0,T)\times\R)} \le C \delta_0.
\end{equation}

\begin{lemma} \label{lem: est-0}Under the hypotheses of Proposition \ref{apriori-estimate}, there exists a positive constant $C$ that is independent of $\delta_0$ and $\e_1$, such that for all $t \in [0,T],$
\begin{equation} \label{eq: est-0}
\begin{aligned}
	&\norm{ \left(\psi, \omega \right)_x(t,\cdot)}_{L^2}^2+  \int_0^t D_{u_2} \, d s \\
	&\le C \norm{  \left( \psi,\omega\right)_x(0,\cdot)}_{L^2}^2+C\delta_S \int_0^t |\dot{X}|^2 \, d s + C \int_0^t \| \phi_x\|_{L^2}^2 \, d s \\ 
	& \quad +C (\varepsilon_1+\sqrt{\delta_0})\int_0^t  \left( G^S_v+G^R  + D_{u_1}+D_{w_1} + D_{w_2} \right) \, d s +C \delta_R.
\end{aligned}
\end{equation}
\end{lemma}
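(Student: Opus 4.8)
The plan is to run an $H^1$-level energy estimate on the difference system \eqref{Underlying wave with shift}: multiply the $\psi$-equation by $-\psi_{xx}$, the $\omega$-equation by $-\omega_{xx}$, integrate over $\bbr$, and add. After one integration by parts the time terms give $\tfrac12\tfrac{d}{dt}\big(\|\psi_x\|_{L^2}^2+\|\omega_x\|_{L^2}^2\big)$, and the viscous term $\big(\tfrac{\mu}{v}u_x-\tfrac{\m}{\ov}\ou_x\big)_x$ in the $\psi$-equation produces the coercive term $-\int_\bbr \tfrac{\mu}{v}|\psi_{xx}|^2\,dx$, which, since $\tfrac{\mu}{v}\ge c>0$ by the uniform bounds on $v$, dominates a positive multiple of $D_{u_2}$. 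Everything else is treated as an error term. The decisive structural point is the skew coupling through $b(v):=\sqrt{\kappa(v)}\,v^{-5/2}$: writing $\tfrac{\sqrt{\kappa}}{v^{5/2}}w_x-\tfrac{\sqrt{\ok}}{\ov^{5/2}}\ow_x=b(v)\,\omega_x+\big(b(v)-b(\ov)\big)\ow_x$ and likewise with $u$ in place of $w$, the top-order contributions $-\int b(v)\,\omega_{xx}\psi_{xx}\,dx$ (from the $\psi$-equation) and $+\int b(v)\,\psi_{xx}\omega_{xx}\,dx$ (from the $\omega$-equation) cancel \emph{exactly}. This cancellation is what lets the estimate close even though the $\omega$-equation carries no parabolic smoothing. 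What survives from the coupling is $-\int b'(v)v_x\,\omega_x\,\psi_{xx}\,dx+\int b'(v)v_x\,\psi_x\,\omega_{xx}\,dx$, in which $b'(v)v_x=b'(v)(\phi_x+\ov_x)$ is bounded in $L^\infty$ by $C(\e_1+\delta_0)$ via \eqref{eq: L^infty bounds}--\eqref{eq: L^infty bounds-2}; hence, by Young's inequality (with a small fixed weight for the $\psi_{xx}$ pairing and weight $\sqrt{\delta_0}$ for the $\omega_{xx}$ pairing, for which \eqref{eq: est-0} only permits the small coefficient $\e_1+\sqrt{\delta_0}$), these terms are bounded by $\tau D_{u_2}+C(\e_1+\sqrt{\delta_0})(D_{u_1}+D_{w_1}+D_{w_2})$.

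The remaining error terms are products of background-wave derivatives with the perturbations and their derivatives, and the uniform principle is to absorb each wave factor into one of the good terms of \eqref{good terms}, or into $\|\phi_x\|_{L^2}^2$, or into a time-integrable rarefaction norm. The pressure term splits as $-\int p'(v)\phi_x\psi_{xx}\,dx-\int\big(p'(v)-p'(\ov)\big)\ov_x\,\psi_{xx}\,dx$: the first is $\le \tau D_{u_2}+C\|\phi_x\|_{L^2}^2$ (the only contribution producing a non-small multiple of $\|\phi_x\|_{L^2}^2$, consistent with the term $C\int_0^t\|\phi_x\|_{L^2}^2\,ds$ in \eqref{eq: est-0}), and the second, using $|p'(v)-p'(\ov)|\le C|\phi|$, $|\vr_x|\sim u^R_x$, $\|\vs_x\|_{L^\infty}\le C\delta_S^2$ and $|\vs_x|\le C|u^S_x|$, is $\le\tau D_{u_2}+C\delta_R\,G^R+C\delta_S^2\,G^S_v$. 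The viscous cross term $\big((\tfrac{\mu}{v}-\tfrac{\m}{\ov})\ou_x\big)_x$ and the dispersive cross terms $\big((b(v)-b(\ov))\ow_x\big)_x$, $\big((b(v)-b(\ov))\ou_x\big)_x$ are estimated by Cauchy--Schwarz against $\psi_{xx}$ (resp.\ $\omega_{xx}$, again with weight $\sqrt{\delta_0}$), after which one repeatedly uses $|\tfrac{\mu}{v}-\tfrac{\m}{\ov}|,\ |b(v)-b(\ov)|\le C|\phi|$, $|u^R_{xx}|\le C|u^R_x|$, $|u^S_{xx}|\le C\delta_S|u^S_x|$, $\|\ou_x\|_{L^\infty}+\|\ov_x\|_{L^\infty}\le C\delta_0$, together with the elementary reductions $\int u^R_x|\phi|^2\,dx=G^R$ and $\int|u^S_x|\,|\phi|^2\,dx=G^S_v$. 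The only genuinely rarefaction-driven pieces are those carrying $\vr_{xx}$ or $\vr_{xxx}$ (arising through $\ow_x,\ow_{xx}$); these are bounded in $L^2$ in $x$ and then in time by Lemma \ref{lem:rarefaction_property}, which yields $\int_0^\infty\big(\|\vr_{xx}\|_{L^2}^2+\|\vr_{xxx}\|_{L^2}^2\big)\,dt\le C\delta_R$. The analogous shock-driven pieces are harmless because $|\vs^{(k)}|\le C\delta_S^{\,k-1}|u^S_x|$ for $k=2,3$, by Lemma \ref{lem:shock-property} and the derivatives of the profile equations \eqref{viscous-dispersive-shock-ext}.

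The shift and source terms are disposed of last. For $-\dot X\int u^S_x\psi_{xx}\,dx$ and $-\dot X\int w^S_x\omega_{xx}\,dx$ one uses $\|u^S_x\|_{L^2}^2\le C\delta_S^3$ and $\|w^S_x\|_{L^2}^2\le C\delta_S^5$ (the latter from $|w^S_\xi|\le C(|\vs_\xi|^2+|\vs_{\xi\xi}|)\le C\delta_S^3e^{-C\delta_S|\xi|}$), together with Young's inequality (weight $\sqrt{\delta_0}$ for the $\omega_{xx}$ pairing), giving $\le\tau D_{u_2}+\sqrt{\delta_0}\,D_{w_2}+C\delta_S|\dot X|^2$. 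For $\int Q_1\psi_{xx}\,dx$ and $\int Q_2\omega_{xx}\,dx$ one invokes Lemma \ref{lem: interaction estimates}, namely $\|Q_1^I\|_{L^2}\le C\delta_S\delta_R e^{-C\delta_S t}$ and $\|Q_2\|_{L^2}\le C\e_1\delta_R\delta_S^{3/2}e^{-C\delta_S t}$, together with $\int_0^\infty\|Q_1^R\|_{L^2}^2\,dt\le C\delta_R$, which follows from Lemma \ref{lem:rarefaction_property} by the same computation as for $\int_0^\infty\|Q_1^R\|_{L^1}^{4/3}\,dt$ in Section \ref{sec:rel_ent}, using $\| |\vr_x|^k \|_{L^2}=\|\vr_x\|_{L^{2k}}^k$ for $k=2,3$; since $\int_0^\infty(\delta_S^2\delta_R^2+\e_1^2\delta_R^2\delta_S^3)e^{-C\delta_S t}\,dt\le C\delta_R$, these contribute $\le\tau D_{u_2}+\sqrt{\delta_0}\,D_{w_2}+C\delta_R$. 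Summing all contributions, choosing the fixed weight $\tau$ small enough (independently of $\delta_0,\e_1$) so that the finitely many $\tau D_{u_2}$ terms are absorbed into $\int_\bbr\tfrac{\mu}{v}|\psi_{xx}|^2\,dx$, and integrating the resulting differential inequality over $[0,t]$ gives precisely \eqref{eq: est-0}.

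The main obstacle is exactly the exact cancellation of the two top-order dispersive terms: without it, the purely dispersive $\omega$-equation would force control of $\int|b(v)|\,|\psi_{xx}|\,|\omega_{xx}|\,dx$ with an $O(1)$ constant, which is not available at this level. Once that cancellation is used, the rest is careful bookkeeping, the one delicate point being to ensure that every term paired against $\omega_{xx}$ genuinely carries a factor $\e_1$, $\delta_0$, or an exponentially small function of time, since \eqref{eq: est-0} tolerates only the small prefactor $\e_1+\sqrt{\delta_0}$ in front of $D_{w_2}$ (and $D_{w_1}$, $D_{u_1}$).
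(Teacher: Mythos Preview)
Your proposal is correct and follows essentially the same route as the paper's proof in Appendix~\ref{app:proof_high_order_1}: multiply \eqref{Underlying wave with shift}$_2$ by $-\psi_{xx}$ and \eqref{Underlying wave with shift}$_3$ by $-\omega_{xx}$, exploit the exact cancellation of the top-order dispersive cross terms $\pm\int b(v)\,\psi_{xx}\,\omega_{xx}\,dx$, and absorb all remaining error terms using the smallness of $\e_1,\delta_0$ and the time-integrability of the rarefaction and interaction norms. The only cosmetic difference is that the paper bounds the $\ow_{xx}$ contribution via the pointwise estimate $|\ow_{xx}|\le C|\ov_x|$ (which follows from $|\vr_{xxx}|\le C|\vr_x|$ and Lemma~\ref{lem:shock-property}), whereas you control it through the time-$L^2$ integrability of $\|\vr_{xxx}\|_{L^2}$; both arguments are valid.
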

\begin{proof}
	Since the proof is complicated, we refer to Appendix \ref{app:proof_high_order_1} for the proof.
\end{proof}
	
\begin{lemma}\label{lem: est-1}
Under the assumptions of Proposition \ref{apriori-estimate}, there exist positive constant $C$ that is independent of $\delta_0$ and $\e_1$, such that for $0 \le t \le T$,
\begin{equation} \label{eq: est-1}
\begin{aligned}
&\norm{\omega}_{L^2}^2+  \int_\mathbb{R} \psi\omega \, dx  +  \int_0^t \left(G_w + D_{w_1} \right) \, d s \\ 
&\le  C \norm{\omega(0,\cdot)}_{L^2}^2 + C \int_\mathbb{R} \psi(0,\cdot)\omega(0,\cdot) \, dx + \int_\mathbb{R} | \ow \phi \omega |\, dx + C \int_\mathbb{R} \left|  \ow(0,\cdot) \phi(0,\cdot)\omega(0,\cdot) \right|\, dx \\
& \quad + C \delta_S \int_0^t | \dot{X}|^2 ds + C \int_0^t D_{u_1} \, d s +C \sqrt{\delta_0} \int_0^t \left( \| \phi_x \|_{L^2}^2 + D_{u_2} + G^S_v +G^R \right) \, d s +C \delta_R.
\end{aligned}
\end{equation}
\end{lemma}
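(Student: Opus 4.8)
The plan is to derive \eqref{eq: est-1} by testing the momentum equation in \eqref{Underlying wave with shift} against $\omega=w-\ow$ and the $w$-equation against $\psi=u-\ou$; equivalently, by differentiating in time the functional $\|\omega\|_{L^2}^2+2\int_\mathbb{R}\psi\omega\,dx$. This coupling is forced by the structure of the system: the $w$-equation in \eqref{Underlying wave with shift} is of transport type and carries no dissipation in $\omega$, so coercive control of $\|\omega\|_{L^2}$ and $\|\omega_x\|_{L^2}$ can only be recovered from the momentum equation through the cross term $\int_\mathbb{R}\psi\omega\,dx$. The crucial algebraic input is the constitutive identity $w=-\sqrt{\kappa(v)}\,v^{-5/2}v_x$ (and likewise for $\ow$), which makes $v_x$ and $w$ pointwise interchangeable; using it,
\[
\big(p(v)-p(\ov)\big)_x=h(v)\,w-h(\ov)\,\ow=h(v)\,\omega+\big(h(v)-h(\ov)\big)\ow,\qquad h(v):=-\,p'(v)\,v^{5/2}\,\kappa(v)^{-1/2}\ge c>0 .
\]

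First I would compute the time derivative of $\|\omega\|_{L^2}^2+2\int_\mathbb{R}\psi\omega\,dx$ using \eqref{Underlying wave with shift}, and isolate the good terms. From the pressure term of $2\int_\mathbb{R}\psi_t\omega\,dx$ the identity above produces the genuinely coercive contribution $-2\int_\mathbb{R}h(v)|\omega|^2\,dx\le -c\,G_w$, modulo the off-diagonal remainder $-2\int_\mathbb{R}\big(h(v)-h(\ov)\big)\ow\,\omega\,dx=O\!\big(\int_\mathbb{R}|\ow\,\phi\,\omega|\,dx\big)$. Integrating by parts the capillary term $2\int_\mathbb{R}\big(\sqrt{\kappa}\,v^{-5/2}w_x-\sqrt{\ok}\,\ov^{-5/2}\ow_x\big)_x\omega\,dx$ gives $-2\int_\mathbb{R}\sqrt{\kappa}\,v^{-5/2}|\omega_x|^2\,dx\le -c\,D_{w_1}$, up to a term $\int_\mathbb{R}\big(\sqrt{\kappa}\,v^{-5/2}-\sqrt{\ok}\,\ov^{-5/2}\big)\ow_x\,\omega_x\,dx$; from $2\int_\mathbb{R}\psi\,\omega_t\,dx$ and $\int_\mathbb{R}\omega\,\omega_t\,dx$, each after one integration by parts, one picks up only the controllable positive term $2\int_\mathbb{R}\sqrt{\kappa}\,v^{-5/2}|\psi_x|^2\,dx\lesssim D_{u_1}$ together with a cross term $\int_\mathbb{R}\omega_x\psi_x$ absorbed as $\tau D_{w_1}+C D_{u_1}$.

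Next I would estimate all the remaining terms in the same spirit as the proof of Lemma \ref{Main Lemma}. Every contribution carrying a coefficient difference $\tfrac{\mu}{v}-\tfrac{\m}{\ov}$, $\sqrt{\kappa}\,v^{-5/2}-\sqrt{\ok}\,\ov^{-5/2}$, $h(v)-h(\ov)$, or $p''(\ov)\ov_x$ is $O(\phi)$ times a wave-profile derivative, and since $|\ov_x|,|\ou_x|,|\ow_x|\le C(|\vr_x|+|\vs_x|)$, Young's inequality followed by Lemma \ref{lem: Useful bad terms estimates} bounds these by $\tau(D_{w_1}+D_{u_1})$ plus $C\sqrt{\delta_0}\big(\|\phi_x\|_{L^2}^2+D_{u_2}+G^S_v+G^R\big)$. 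The shift terms $\dot X\us_x$, $\dot X\ws_x$ contribute, via $\|\us_x\|_{L^2}+\|\ws_x\|_{L^2}+\|\ws\|_{L^2}\le C\delta_S^{3/2}$ (from Lemma \ref{lem:shock-property}) and Young's inequality, quantities bounded by $\tau(G_w+D_{u_1})+C\delta_S|\dot X|^2$. The interaction parts $Q_1^I$, $Q_2$ carry the exponential factor $\delta_R\delta_S e^{-C\delta_S t}$ of Lemma \ref{lem: interaction estimates}, while for the rarefaction error one uses $\int_0^\infty\|Q_1^R\|_{L^2}^2\,ds\le C\delta_R$, which follows from the decay bounds of Lemma \ref{lem:rarefaction_property} for $\ur_{xx}$ and $\vr_{xxx}$; altogether their contribution after integrating in time is $\le C\delta_R+\tau\int_0^t(G_w+D_{u_1})\,ds$. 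Choosing $\tau$ small and integrating the resulting differential inequality over $[0,t]$, the cross term $\int_\mathbb{R}\psi\omega\,dx$ reappears at $t$ and at $0$, and — tracking the off-diagonal pressure remainder as the boundary value of a lower-order correction involving $\ow\phi\omega$ rather than as a rate — one is left with $\int_\mathbb{R}|\ow\phi\omega|\,dx$ at time $t$ and $C\int_\mathbb{R}|\ow(0)\phi(0)\omega(0)|\,dx$ at time $0$, yielding \eqref{eq: est-1} since $G_w\sim\int_\mathbb{R}|\omega|^2$ and $D_{w_1}=\int_\mathbb{R}|\omega_x|^2$.

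The main obstacle is precisely this pressure-induced coupling. There is no dissipative mechanism in the $w$-equation producing $\|\omega\|_{L^2}$, and recovering it forces one to commute $w$ with $v_x$ inside $(p(v)-p(\ov))_x$; the price is the off-diagonal term $\int_\mathbb{R}\big(h(v)-h(\ov)\big)\ow\,\omega\,dx$, which is quadratic in the perturbation but weighted only by the slowly decaying profile $\ow$, so it cannot be closed at the level of this lemma and must be carried along to be absorbed in Lemma \ref{lem: est-2}. A secondary, purely technical difficulty is the bookkeeping required to check that every error term generated by the coupling lands inside the right-hand side of \eqref{eq: est-1} with the correct small prefactor ($\sqrt{\delta_0}$, $\delta_S$, or $\delta_R$), which relies on repeated use of Lemma \ref{lem: Useful bad terms estimates} together with the $L^\infty$–$L^2$ interpolation already exploited in Section \ref{sec:rel_ent}.
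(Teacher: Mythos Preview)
Your approach is correct in spirit and very close to the paper's, but there is one structural difference worth flagging. You propose to add $\tfrac{d}{dt}\|\omega\|_{L^2}^2$ directly to the cross-identity $\tfrac{d}{dt}\int\psi\omega$, and then treat the viscous contribution $\int(\mu v^{-1}\psi_x)_x\omega\,dx$ as a harmless cross term $\int\psi_x\omega_x$ absorbed by $\tau D_{w_1}+CD_{u_1}$. The paper does \emph{not} do this: it starts only from $\tfrac{d}{dt}\int\psi\omega$ and instead rewrites the viscous term using $\psi_{xx}=\phi_{tx}-\dot X\,v^S_{xx}$ together with the constitutive identity $\phi_x=-v^{5/2}\kappa^{-1/2}\omega+\big(v^{5/2}\kappa^{-1/2}-\ov^{5/2}\ok^{-1/2}\big)\ow$. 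That substitution turns $\int\tfrac{\mu}{v}\psi_{xx}\omega$ into a genuine time derivative $-\tfrac12\tfrac{d}{dt}\int\mu\kappa^{-1/2}v^{3/2}\omega^2-\tfrac{d}{dt}\int\tfrac{\mu}{v}\big(v^{5/2}\kappa^{-1/2}-\ov^{5/2}\ok^{-1/2}\big)\ow\,\omega$ plus lower order. This is precisely the source of both the $\|\omega\|_{L^2}^2$ term and the $\int|\ow\phi\omega|$ boundary terms in the statement.

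In your version the off-diagonal pressure remainder $\int(h(v)-h(\ov))\ow\omega$ appears as a rate, not a boundary value; your sentence ``tracking \ldots as the boundary value of a lower-order correction'' does not explain how to convert one into the other, and in fact there is no natural way to do so from your functional alone. However, this is not a genuine gap: as a rate, that term is bounded by $C\sqrt{\delta_0}(G^S_v+G^R+G_w)$ via Young's inequality and Lemma~\ref{lem: Useful bad terms estimates}, and the $G_w$ piece is absorbed on the left. So your route yields a slightly different (and arguably cleaner) inequality that serves equally well in Step~2; the precise $\ow\phi\omega$ boundary terms in the stated lemma are an artifact of the paper's viscous-term manipulation, not a necessity. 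Also, a small correction: those boundary terms are not ``carried to Lemma~\ref{lem: est-2}'' but are disposed of by Young's inequality immediately in Step~2.
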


\begin{proof}
	We refer to Appendix \ref{app:proof_high_order_2} for the proof of Lemma \ref{lem: est-1}.
\end{proof}
	
\begin{lemma}\label{lem: est-2}
	Under the assumptions of Proposition \ref{apriori-estimate}, there exists a positive constant $C$ that is independent of $\delta_0$ and $\e_1$, such that for $0 \le t \le T$
	\begin{equation}\label{eq: est-2}
	\begin{aligned}
	&\int_{\mathbb{R}} \psi_x\omega_x \, dx+ \int_{0}^t \left( D_{w_1} + D_{w_2} \right) \, ds 
	\\
	&\le \int_{\mathbb{R}} \psi(0,\cdot)_x\omega(0,\cdot)_x \, dx+C \delta_S \int_0^t |\dot{X}(s)|^2 \, d s + C_2 \int_{0}^{t} D_{u_2} ds \\ 
	&\quad  +C(\varepsilon_1+\sqrt{\delta_0}) \int_0^t \left(\norm{\phi_x}_{L^2}^2+G_w+G^S_v+G^R+D_{u_1} \right) \, d s+C \delta_R.
	\end{aligned}
	\end{equation}	
\end{lemma}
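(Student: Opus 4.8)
The plan is to derive an evolution identity for the cross term $\int_\mathbb{R} \psi_x\omega_x\,dx$ by differentiating the $\psi$-equation and the $\omega$-equation in \eqref{Underlying wave with shift} once in $x$, multiplying by $\omega_x$ and $\psi_x$ respectively, adding, and integrating over $\mathbb{R}$. The point of pairing $\psi_x$ with $\omega_x$ is that the skew-symmetric coupling between $u$ and $w$ in the momentum and $w$-equations produces, after integration by parts, the two dissipative good terms $D_{w_1}=\int_\mathbb{R}|(w-\ow)_x|^2\,dx$ and $D_{w_2}=\int_\mathbb{R}|(w-\ow)_{xx}|^2\,dx$ on the favorable side, up to a controllable multiple of $D_{u_2}$ (hence the constant $C_2$ in front of $\int_0^t D_{u_2}\,ds$, which will be absorbed later against the $D_{u_2}$-gain coming from Lemma \ref{lem: est-0}). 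Concretely, I would write $\frac{d}{dt}\int_\mathbb{R}\psi_x\omega_x\,dx = \int_\mathbb{R}\psi_{xt}\omega_x\,dx+\int_\mathbb{R}\psi_x\omega_{xt}\,dx$, substitute $\psi_{xt}$ and $\omega_{xt}$ from the $x$-differentiated equations, and organize the resulting terms into: (i) the principal linear terms involving $\frac{\sqrt{\kappa}}{v^{5/2}}w_{xx}$ and $\frac{\sqrt{\kappa}}{v^{5/2}}u_{xx}$, which combine to give the leading dissipation $-c\,D_{w_1}-c\,D_{w_2}$ after integration by parts and using $\kappa,v$ bounded above and below; (ii) the diffusion term $\big(\frac{\mu}{v}u_x-\frac{\m}{\ov}\ou_x\big)_x$, whose contribution against $\omega_{xx}$ yields a term bounded by $\tau D_{w_2}+C_\tau D_{u_2}$, i.e.\ the source of the $C_2\int_0^t D_{u_2}\,ds$; (iii) shift terms $\dot X\us_x$, $\dot X\ws_x$, whose $x$-derivatives paired with $\psi_x,\omega_x$ are bounded via \eqref{shock-property} and Young's inequality by $C\delta_S|\dot X|^2 + (\text{small})(D_{w_1}+D_{u_1})$; (iv) the error terms $Q_1,Q_2$, handled by Lemma \ref{lem: interaction estimates} and the $L^1$-bound on $Q_1^R$, contributing $C\delta_R$-type bounds after time integration; and (v) all remaining nonlinear/cross terms, which are the genuinely tedious part.

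For step (v), the nonlinear terms come from expanding differences like $\frac{\sqrt{\kappa}}{v^{5/2}}-\frac{\sqrt{\ok}}{\ov^{5/2}}$ and $\frac{\mu}{v}-\frac{\m}{\ov}$ in Taylor form, which produce factors of $\phi=v-\ov$ and its derivatives times background-wave derivatives $\ov_x,\ou_x,\us_x,\vr_x$. Using the $L^\infty$ bounds \eqref{eq: L^infty bounds}--\eqref{eq: L^infty bounds-2}, the smallness $\|\phi\|_{L^\infty}+\|\phi_x\|_{L^\infty}+\|\psi\|_{L^\infty}\le C\varepsilon_1$, the equivalence of $|\phi|$, $|p(v)-p(\ov)|$ and $p(v|\ov)$ from Lemma \ref{lem : Estimate-relative}, and above all Lemma \ref{lem: Useful bad terms estimates} to trade weighted powers of $|\us_x|,|\ur_x|,|\ov_x|$ against the good terms $G^S_v$, $G^R$, I expect every such term to be bounded by $C(\varepsilon_1+\sqrt{\delta_0})\big(\|\phi_x\|_{L^2}^2+G_w+G^S_v+G^R+D_{u_1}+D_{w_1}+D_{w_2}\big)+\tau(D_{w_1}+D_{w_2})$; choosing $\tau$ small and $\delta_0,\varepsilon_1$ small absorbs the $D_{w_1},D_{w_2}$ pieces into the left-hand side. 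One also needs to keep $\|w-\ow\|_{L^\infty}$ and $\|(w-\ow)_x\|_{L^\infty}$ under control; since $w$ is equivalent to $v_x$ modulo lower-order terms, these follow from \eqref{smallness} and Sobolev embedding, but care is required because only $H^1$-regularity of $\omega$ is available, so any term that would need $\|\omega_{xx}\|_{L^\infty}$ must instead be integrated by parts to move a derivative onto a smooth background factor. Finally I would integrate the resulting differential inequality in $t$ over $[0,t]$, using $\int_0^\infty\|Q_1^R\|_{L^1}^{4/3}\,ds\le C\delta_R^{1/3}$ (already established in the proof of Lemma \ref{Main Lemma}) and $\int_0^t\delta_R\delta_Se^{-C\delta_S s}\,ds\le C\delta_R$, to arrive at \eqref{eq: est-2}.

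The main obstacle is step (v): bookkeeping the large number of nonlinear commutator terms generated by the two highest-derivative couplings and verifying that each one genuinely carries a smallness factor $\varepsilon_1+\sqrt{\delta_0}$ or a decaying interaction factor, rather than a bare $D_{w_2}$ with an $O(1)$ coefficient — in particular the terms where two derivatives land on the perturbation and only background quantities are left to absorb them must be integrated by parts in exactly the right way. A secondary subtlety is the appearance of the un-absorbable constant $C_2$ in front of $\int_0^t D_{u_2}\,ds$: this is deliberate and is reconciled only when \eqref{eq: est-0}, \eqref{eq: est-1}, and \eqref{eq: est-2} are combined with suitable weights (so that the net coefficient of $\int_0^t D_{u_2}\,ds$ is positive), together with Lemma \ref{Main Lemma} to close the $\|(w-\ow)_x\|_{L^2}^2$ loop; hence within the proof of Lemma \ref{lem: est-2} alone the $C_2 D_{u_2}$ term is simply carried along rather than eliminated. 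Because of the length and technicality, I would relegate the full computation to the appendix (as the authors do for the companion Lemmas \ref{lem: est-0} and \ref{lem: est-1}) and present here only the structure above.
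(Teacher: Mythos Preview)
Your proposal is correct and follows essentially the same route as the paper's proof in Appendix~D: differentiate the $\psi$- and $\omega$-equations once in $x$, pair with $\omega_x$ and $\psi_x$ respectively, and extract the good terms $\mathcal{G}_{w_2}\sim D_{w_1}$ and $\mathcal{D}_{w_2}\sim D_{w_2}$ from the skew-symmetric capillary coupling, while the viscous term contributes the unabsorbable $C_2\int_0^t D_{u_2}\,ds$ via Young's inequality. One small imprecision: for the error term you will face $(Q_1)_x$ and $(Q_2)_x$, not $Q_1,Q_2$ themselves, so the relevant bound is $\int_0^t\|(Q_1^R)_x\|_{L^2}^2\,ds\le C\delta_R$ (which requires the third- and fourth-order derivative estimates in Lemma~\ref{lem:rarefaction_property}), not the $\|Q_1^R\|_{L^1}^{4/3}$ bound from Lemma~\ref{Main Lemma}; this is also why the right-hand side of \eqref{eq: est-2} carries $C\delta_R$ rather than $C\delta_R^{1/3}$.
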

\begin{proof}
	Again, since the proof is technical, we postpone the proof of Lemma \ref{lem: est-2} to Appendix \ref{app:proof_high_order_3}.
\end{proof}

We are now ready to prove the high-order estimate \eqref{eq:H1-est}. In the following, we carefully combine the estimates \eqref{eq: est-0}, \eqref{eq: est-1} and \eqref{eq: est-2} as follows.\\
	
\noindent $\bullet$ (Step 1): Combine \eqref{eq: est-2} and \eqref{eq: est-0} with an appropriate weight to derive the estimate \eqref{eq: est-3}.\\
	
\noindent $\bullet$ (Step 2): Combine \eqref{eq: est-3} and \eqref{eq: est-1}, again by considering an appropriate weights to derive \eqref{eq: est-4}.\\
	
\noindent $\bullet$ (Step 3): Finally, combine \eqref{eq: est-4} and the $L^2$-estimates in Lemma \ref{Main Lemma} to obtain \eqref{eq:H1-est}. \\
	
We present the details of each step.\\
	
\noindent $\bullet$ (Step 1): By rearranging the first term $\int_\mathbb{R} \psi_x \omega_x \, dx$ in \eqref{eq: est-2} and applying Young's inequality, we have
\begin{equation} \label{eq: est-2,1}
\begin{aligned}
	&\int_{0}^t \left( D_{w_1} + D_{w_2} \right) \, ds 
	\\
	&\le \frac{1}{2} \norm{\psi_x}_{L^2}^2+\frac12 \norm{\omega_x}_{L^2}^2  + \frac12 \norm{\psi_{0x}}_{L^2}^2+\frac12\norm{\omega_{0x}}_{L^2} +C \delta_S \int_0^t |\dot{X}|^2 \, d s + C_2 \int_{0}^{t} D_{u_2} \, ds \\ 
	&\quad  +  C(\varepsilon_1+\sqrt{\delta_0}) \int_0^t \left( \norm{\phi_x}_{L^2}^2+D_{u_1}+G_w +G^S_v+G^R\right) \, d s+C \delta_R.
\end{aligned}
\end{equation}
We multiply \eqref{eq: est-2,1} by $\frac{1}{2 \max(1,C_2)}$ and then add \eqref{eq: est-0} in Lemma \ref{lem: est-0} to derive
\begin{equation*} 
\begin{aligned}
	&\frac{1}{2} \norm{\psi_x}_{L^2}^2+\frac12 \norm{\omega_x}_{L^2}^2 +  \frac{1}{2} \int_0^t D_{u_2} \, d s + \frac{1}{2 \max(1,C_2)} \int_{0}^t  (D_{w_1}+D_{w_2}) \, ds  \\
	&\le C \norm{\psi_{0x}}_{L^2}^2+C \norm{\omega_{0x}}_{L^2}^2   +C\delta_S \int_0^t |\dot{X}|^2 \, d s + C \int_0^t \norm{\phi_x}_{L^2}^2 \, d s \\ 
	& \quad +C (\varepsilon_1+\sqrt{\delta_0}) \int_0^t  \left( \norm{\phi_x}_{L^2}^2+ D_{u_1} + G_w+D_{w_1}+D_{w_2} +G^S_v+G^R \right) \, d s +C \delta_R.\\  
\end{aligned}
\end{equation*} 
Using the smallness of $\delta_0$ and $\varepsilon_1$, there exists a positive constant $C_3$, which is independent of $\delta_0$ or $\e_1$ such that
\begin{equation} \label{eq: est-3}
\begin{aligned}
	&\norm{ \psi_x }_{L^2}^2+\norm{ \omega_x }_{L^2}^2+  \int_0^t D_{u_2} \, d s + \int_{0}^t  (D_{w_1}+D_{w_2}) \, ds  \\
	&\le C \norm{\psi_{0x}}_{L^2}^2+C\norm{\omega_{0x}}_{L^2}  +C\delta_S \int_0^t |\dot{X}|^2 \, d s + C_3 \int_0^t \norm{\phi_x}_{L^2}^2 \, d s \\
	&\quad +
 C(\varepsilon_1+\sqrt{\delta_0}) \int_0^t \left( D_{u_1}+G_w +G^S_v+G^R\right) \, d s+C \delta_R.\\  
\end{aligned}
\end{equation} 
	
\noindent $\bullet$ (Step 2): Similarly, we move the second term $\int_\mathbb{R} \psi \omega \, dx$ in the left-hand side of \eqref{eq: est-1} and apply Young's inequality, $|\ow| \le C |\ov_x|$, and Lemma \ref{lem:shock-property} to get
\begin{equation} \label{eq: est-1,1}
\begin{aligned}
	\norm{\omega}_{L^2}^2+  \int_0^t \left( G_w + D_{w_1} \right) \, d s &\le C\left(\norm{\phi_0}_{L^2}^2 + \norm{\psi_0}_{L^2}^2+\norm{\omega_0}_{L^2}^2\right)+ \frac{1}{2} \left( \norm{ \psi}_{L^2}^2 + \norm{\omega}_{L^2}^2 \right) \\
	&\quad + C \delta_0 \left( \norm{ \phi}_{L^2}^2 + \norm{\omega}_{L^2}^2 \right) + C \delta_S \int_0^t | \dot{X}|^2 \, ds+ C \int_0^t D_{u_1} \, d s \\
	&\quad +C \sqrt{\delta_0} \int_0^t \left( \norm{\phi_x}_{L^2}^2 +D_{u_2}+ G^S_v + G^R \right) \, d s + C \delta_R .
\end{aligned}
\end{equation}
On the other hand, it follows from the definition of $\omega$ that 
\begin{align*}
\omega = w-\ow=-\frac{\sqrt{\kappa}}{v^{5/2}} \phi_x -\ov_x\left(\frac{\sqrt{\kappa}}{v^{5/2}}-\frac{\sqrt{\ok}}{\ov^{5/2}}\right).
\end{align*}
Since
\[\|\phi_x\|_{L^2}\le C\|\omega\|_{L^2}+C\int_\R |\ov_x||v-\ov|\,dx,\]
there exists a positive constant $c_*$ independent of $\delta_0$, $\varepsilon_1$ such that
\begin{align*}
\|\phi_x\|_{L^2}^2&\le \frac{1}{c_*}\|\omega\|_{L^2}^2+C\int_\R |\ov_x|\,dx\int_{\R}|\ov_x||p(v)-p(\ov)|^2\,dx\le \frac{1}{c_*}\|\omega\|_{L^2}^2+C\delta_0(G^S_v +G^R),
\end{align*}
which yields
\begin{equation} \label{eq: omega 2-norm inequality}
\begin{aligned}
	c_* \norm{\phi_x}_{L^2}^2\le \norm{\omega}_{L^2}^2 + C\delta_0(G^S_v +G^R).
\end{aligned}
\end{equation}
We multiply the inequality \eqref{eq: est-3} by $\frac{c_*}{4 \max(1,C_3)}$, add with \eqref{eq: est-1,1}, and then use the smallness of $\delta_0$ to obtain
\begin{equation*}
\begin{aligned}
&\frac{1}{4}\norm{\omega}_{L^2}^2+\frac{c_*}{4 \max(1,C_3)} (\norm{\psi_x}_{L^2}^2+\|\omega_x\|_{L^2}^2) \\
&\quad +\frac{c_*}{8 \max(1,C_3)} \int_0^t \left( D_{u_2} + D_{w_1} + D_{w_2} \right) \, d s+ \frac{3}{4} \int_0^t \left( G_w + D_{w_1} \right) \, d s \\ 
&\le C (\norm{\phi_0}_{L^2}^2+\norm{\psi_0}_{H^1}^2+\norm{\omega_0}_{H^1}^2) + \frac{1}{2} \norm{\phi}_{L^2}^2+\frac12 \norm{\psi}_{L^2}^2 + C \delta_S \int_0^t | \dot{X}|^2 \, ds \\
&\quad + C \int_0^t D_{u_1} \, d s + \frac{c_*}{2} \int_0^t \norm{\phi_x}_{L^2}^2 \, d s  +C (\varepsilon_1 + \sqrt{\delta_0} )\int_0^t \left( G^S_v + G^R \right) \, d s +C \delta_R.
\end{aligned}
\end{equation*}
Finally, using the inequality \eqref{eq: omega 2-norm inequality}, one obtains
\begin{equation} \label{eq: est-4}
\begin{aligned}
	&\norm{\psi_x}_{L^2}^2+ \norm{\omega}_{L^2}^2+\norm{\omega_x}_{L^2}^2 + \int_0^t \left( D_{u_2} + G_w+D_{w_1}+D_{w_2}  \right) \, d s \\
	&\le C (\norm{\phi_0}_{L^2}^2+\norm{\psi_0}_{H^1}^2+\norm{\omega_0}_{H^1}^2)\\
	&\quad + C_4 \left( \norm{\phi}_{L^2}^2+\norm{\psi}_{L^2}^2 +  \delta_S \int_0^t | \dot{X}|^2 ds + (\varepsilon_1 + \sqrt{\delta_0} )\int_0^t
\left(  G^S_v+G^R \right) ds +  \int_0^t D_{u_1}\, d s \right)+C \delta_R,
\end{aligned}
\end{equation}
where $C_4$ is a constant independent of $\delta_0$ or $\e_1$.\\

\noindent $\bullet$ (Step 3): We multiply \eqref{eq: est-4} by $\frac{1}{2 \max (1,C_4)}$, add it to \eqref{energy-est}, and use the smallness of $\delta_S$ to derive
\begin{equation*}
\begin{aligned}
& \frac{1}{2} \left(\norm{\phi}_{L^2}^2+\norm{\psi}_{L^2}^2+\norm{\omega}_{L^2}^2\right) + \frac{1}{2 \max (1,C_4)}(\norm{\psi_x}_{L^2}^2+\norm{\omega_x}_{L^2}^2)+ \frac{\delta_S}{2} \int_0^t | \dot{X}|^2 \, d s  \\
&\quad  +\frac{1}{2} \int_0^t \left( G_1+G_3+G^S_u+G^S_v+G^R+D_{u_1} \right) \, ds + \frac{1}{2 \max(1,C_4)} \int_0^t \left(D_{u_2} + G_w+D_{w_1}+D_{w_2}  \right)\, ds \\ 
& \le C( \norm{\phi_0}_{L^2}^2+\norm{\psi_0}_{H^1}^2+\norm{\omega_0}_{H^1}^2 )+C\delta_R^{1/3},
\end{aligned}
\end{equation*}
which implies \eqref{eq:H1-est}. This completes the proof of Proposition \ref{apriori-estimate}.

\begin{appendix}

\section{Proof of Lemma \ref{lem:rarefaction_property}}\label{sec:app-rarefaction-proof}
\setcounter{equation}{0}
In this appendix, we provide proof of the estimates on the high-order derivatives of the smooth approximation of the 1-rarefaction wave. As in \cite{MN86}, the smooth approximation of 1-rarefaction wave $w(t,x)$ is given by
\[w(t,x)=w_0(x_0(t,x)),\quad x=x_0(t,x)+w_0(x_0(t,x))t.\]
Therefore, we obtain
\[w_{xxx}(t,x) = \frac{w_0'''(x_0)}{(1+w'_0(x_0)t)^4}-\frac{3t(w''_0(x_0))^2}{(1+w_0'(x_0)t)^5}.\]
On the other hand, we note that
\begin{align}
	\begin{aligned}\label{w_0}
		&w'_0(x_0) = \frac{\delta_R}{2}(1-\tanh^2 x_0),\\
		&w''_0(x_0) = -\delta_R \tanh x_0 (1-\tanh^2 x_0),\\
		&w'''_0(x_0) = -\delta_R (1-3\tanh^2 x_0)(1-\tanh^2 x_0)
	\end{aligned}
\end{align}
which implies $|w''_0(x_0)|\le 2|w'_0(x_0)|$ and $|w'''_0(x_0)|\le 8|w'_0(x_0)|$ and therefore,
\[|w_{xxx}(t,x)|\le \frac{8|w'_0(x_0)|}{(1+w'_0(x_0)t)^4}+\frac{tw_0'(x_0)}{1+w'_0(x_0)t}\frac{12|w'_0(x_0)|}{(1+w'_0(x_0)t)^4}\le \frac{20|w'_0(x_0)|}{(1+w'_0(x_0)t)^4}.\]
The above estimate gives
\begin{align*}
	\int_{\R}|w_{xxx}(t,x)|^p\,dx&\le C\int_{\R} |w'_0(x_0)|^p(1+w'_0(x_0)t)^{1-4p}\,dx_0,
\end{align*}
which yields 
\begin{equation}\label{w_xxx-1}
	\|w_{xxx}\|_{L^p}^p\le \|w_0'\|_{L^p}^p\le C\delta_R^p.
\end{equation}
Furthermore, we split the integral for $\|w_{xxx}\|_{L^p}^p$ as
\begin{align*}
	\int_\R |w_{xxx}(t,x)|^p\,dx&=C\int_{0}^\infty |w_0'(x_0)|^p(1+w'_0(x_0)t)^{1-4p}\,dx_0\\
	&\quad +C\int_{-\infty}^0 |w_0'(x_0)|^p(1+w'_0(x_0)t)^{1-4p}\,dx_0=I_1+I_2.
\end{align*}
We only consider the estimate of $I_1$, since $I_2$ can be treated similarly. Using change of variables $y=w'_0(x_0)t$, we observe that
\begin{align*}
	I_1=\int_0^\infty |w_0'(x_0)|^p(1+w'_0(x_0)t)^{1-4p}\,dx_0=\int_0^{\frac{\delta_Rt}{2}}\left(\frac{y}{t}\right)^p(1+y)^{1-4p}(t|w''_0(x_0)|)^{-1}\,dy.
\end{align*}
However, since
\[t|w''_0(x_0)|=\delta_R t (\tanh x_0)(1-\tanh ^2 x_0)=2w_0'(x_0) t(\tanh x_0)=2y\sqrt{1-\frac{2}{\delta_Rt}y},\]
we have
\begin{align*}I_1 &= \int_0^{\frac{\delta_R t}{2}}\left(\frac{y}{t}\right)^p(1+y)^{1-4p}\frac{1}{2y}\frac{1}{\sqrt{1-\frac{2}{\delta_R t}y}}\,dy\\
	&=\int_0^1 \left(\frac{\delta_R z}{2}\right)^p \left(1+\frac{\delta_R tz}{2}\right)^{1-4p}\frac{1}{2z}\frac{1}{\sqrt{1-z}}\,dz\le C\delta_R^p\int_0^1 z^{p-1}\left(1+\frac{\delta_R tz}{2}\right)^{1-p}\frac{1}{\sqrt{1-z}}\,dz\\
	&\le C\delta_R^p\int_0^1 \left(\frac{2}{\delta_R t}\right)^{p-1}\frac{1}{\sqrt{1-z}}\,dz\le C \delta_R \frac{1}{t^{p-1}}. 
\end{align*}
Therefore, we conclude that
\begin{equation}\label{w_xxx-2}
\|w_{xxx}\|_{L^p}^p\le \frac{C\delta_R}{(1+t)^{p-1}}.
\end{equation}
Finally, it follows from \eqref{w_0} that
\[|w'''_0(x_0)|=\delta_R|(1-\tanh^2 x_0)^2-2\tanh x_0(1-\tanh^2 x_0)|\le \frac{4}{\delta_R}|w'_0(x_0)|^2+2|w''_0(x_0)|.\]
This yields
\begin{align*}
	|w_{xxx}| &\le \frac{|w'''_0(x_0)|}{(1+w'_0(x_0)t)^4}+\frac{3t|w''_0(x_0)|^2}{(1+w'_0(x_0)t)^5}\\
	&\le \frac{4|w'_0(x_0)|^2}{\delta_R(1+w'_0(x_0)t)^4}+\frac{2|w''_0(x_0)|}{(1+w'_0(x_0)t)^4}+\frac{6t|w'_0(x_0)||w''_0(x_0)|}{(1+w'_0(x_0)t)^5}\\
	&\le \frac{4|w'_0(x_0)|^2}{\delta_R(1+w'_0(x_0)t)^4}+ \frac{8|w''_0(x_0)|}{(1+w'_0(x_0)t)^3}\le \frac{4|w'_0(x_0)|^2}{\delta_R(1+w'_0(x_0)t)^4}+8|w_{xx}|
\end{align*}
where we used $w_{xx}(t,x) = \frac{w''_0(x_0)}{(1+w'_0(x_0)t)^3}$ in the last inequality. Therefore, 
\begin{align*}
	\|w_{xxx}\|_{L^p}^p\le \frac{C}{\delta_R^p}\int_\R |w'_0(x_0)|^{2p}(1+w'_0(x_0)t)^{1-4p}dx_0 + C\|w_{xx}\|_{L^p}^p.
\end{align*}
Now, we estimate the first integral by using the same argument above as
\begin{align*}
	&\frac{C}{\delta_R^p}\int_\R |w'_0(x_0)|^{2p}(1+w'_0(x_0)t)^{1-4p}dx_0\\
	&\le \frac{C}{\delta_R^p} \int_0^1 \left(\frac{\delta_R z}{2}\right)^{2p}\left(1+\frac{\delta_R tz}{2}\right)^{1-4p} \frac{1}{2z}\frac{1}{\sqrt{1-z}}\,dz\\
	&\le C\delta_R^p\int_0^1 z^{2p-1}\left(1+\frac{\delta_R tz}{2}\right)^{1-2p}\frac{1}{\sqrt{1-z}}\,dz \le C\delta^p_R \left(\frac{2}{\delta_R t}\right)^{2p-1}\le \frac{C\delta_R^{1-p}}{(1+t)^{2p-1}}.
\end{align*}
Thus, we obtain the third estimate on $\|w_{xxx}\|_{L^p}$:
\begin{equation}\label{w_xxx-3}
\|w_{xxx}\|_{L^p}\le \|w_{xx}\|_{L^p}+\frac{C\delta_R^{1/p-1}}{(1+t)^{2-1/p}}\le \frac{1}{1+t}+\frac{\delta_R^{1/p-1}}{(1+t)^{2-1/p}}.
\end{equation}
Combining \eqref{w_xxx-1}, \eqref{w_xxx-2}, and \eqref{w_xxx-3}, we obtain the required estimate for the third-order derivative. The estimate on the fourth-order derivatives can be obtained in a similar manner.

\section{Proof of Lemma \ref{lem: est-0}}\label{app:proof_high_order_1}
\setcounter{equation}{0}
In this appendix, we provide a detailed proof for Lemma \ref{lem: est-0}. We multiply \eqref{Underlying wave with shift}$_2$  by $-\psi_{xx}$ to obtain
\begin{equation}\label{eq: psi_t psi_xx}
\begin{aligned}
-\psi_t \psi_{xx} - \left(p(v)-p(\ov) \right)_x \psi_{xx} &= -\left( \frac{\mu}{v} u_x- \frac{\m}{\ov} \ou_x \right)_x \psi_{xx} - \left( \frac{\sqrt{\kappa}}{v^{5/2}} w_x - \frac{\sqrt{\ok}}{\ov^{5/2}} \ow_x\right)_x \psi_{xx} \\
&\quad - \dot{X} \us_x \psi_{xx} + Q_1 \psi_{xx}.
\end{aligned}
\end{equation}
Similarly, we multiply \eqref{Underlying wave with shift}$_3$ by  $-\omega_{xx}$ to obtain

\begin{equation}\label{eq: omega_t omega_xx}
-\omega _t \omega_{xx}= \left( \frac{\sqrt{\kappa}}{v^{5/2}} u_x - \frac{\sqrt{\ok}}{\ov^{5/2}} \ou_x \right)_x \omega_{xx} - \dot{X} \ws_x \omega_{xx} +Q_2 \omega_{xx}.
\end{equation}
We combine \eqref{eq: psi_t psi_xx} and \eqref{eq: omega_t omega_xx} to get
\begin{align*}
&\frac{d}{dt}\int_{\mathbb{R}} \frac{|\psi_x|^2}{2} \,dx+\frac{d}{dt}\int_{\mathbb{R}} \frac{|\omega_x|^2}{2} \,dx + \mathcal{D}_{u_2} = \sum_{i=1}^5 K_5,
\end{align*}
where
\[\mathcal{D}_{u_2}:=\int_{\mathbb{R}}\frac{\mu}{v} |\psi_{xx}|^2\,dx,\]
and
\begin{align*}
K_1&=-\dot{X}\int_{\mathbb{R}} (\us_x\psi_{xx}+\ws_x \omega_{xx})\,dx,\\ 
K_2&=-\int_{\mathbb{R}} \left(\frac{\mu}{v}\right)_x\psi_x\psi_{xx} \, dx + \int_{\mathbb{R}}\psi_x \left( \frac{\sqrt{\kappa}}{v^{5/2}}\right)_x \omega_{xx} \, dx -\int_{\mathbb{R}}\omega_x \left( \frac{\sqrt{\kappa}}{v^{5/2}}\right)_x \psi_{xx} \, dx,\\     
K_3&=-\int_{\mathbb{R}} \overline{u}_{xx}\left(\frac{\mu}{v}-\frac{\m}{\ov}\right)\psi_{xx} \, dx + \int_{\mathbb{R}}\ou_{xx} \left( \frac{\sqrt{\kappa}}{v^{5/2}}-\frac{\sqrt{\ok}}{\ov^{5/2}} \right) \omega_{xx}dx + \int_{\mathbb{R}}\ow_{xx} \left( \frac{\sqrt{\kappa}}{v^{5/2}}-\frac{\sqrt{\ok}}{\ov^{5/2}} \right) \psi_{xx} \, dx, \\
K_4&=\int_{\mathbb{R}} (p(v)-p(\ov))_x \psi_{xx}\,dx-\int_{\mathbb{R}} \overline{u}_x\left(\frac{\mu}{v}-\frac{\m}{\ov}\right)_x\psi_{xx} \, dx \\ 
&\quad +\int_{\mathbb{R}} \ou_x \left( \frac{\sqrt{\kappa}}{v^{5/2}}-\frac{\sqrt{\ok}}{\ov^{5/2}} \right)_x \omega_{xx} \, dx+\int_{\mathbb{R}}\ow_x \left( \frac{\sqrt{\kappa}}{v^{5/2}}-\frac{\sqrt{\ok}}{\ov^{5/2}} \right)_x \psi_{xx} \, dx,\\
K_5&=\int_\mathbb{R} (Q_1 \psi_{xx} + Q_2 \omega_{xx}) \, dx.
\end{align*}

\noindent $\bullet$ (Estimate of $K_1$):  Using $|w^S_x|\le C(|v^S_{xx}|+|v^S_x|)$ and \eqref{shock-property}, we estimate $K_1$ as
\begin{align*}
|K_1|
&\le C |\dot{X}|\sqrt{\int_\mathbb{R} |\vs_x|^2 \, dx } \left( \sqrt{\int_{\mathbb{R}} |\psi_{xx}|^2 \,dx} + \sqrt{\int_\mathbb{R} |\omega_{xx}|^2 \, dx}\right) \\
&\le C |\dot{X}| \delta_S^{3/2} \left( \norm{\psi_{xx}}_{L^2} + \norm{\omega_{xx}}_{L^2} \right) \\ 
&\le \delta_S|\dot{X}|^2+C\delta_S^2 \left( \mathcal{D}_{u_2} + \lVert \omega_{xx} \rVert^2_{L^2} \right) \le  \delta_S|\dot{X}|^2+ \tau \mathcal{D}_{u_2} + C \delta_S^2 \| \omega_{xx} \|_{L^2}^2,
\end{align*}
where $\tau$ is chosen to be small enough later.\\

\noindent $\bullet$ (Estimate of $K_2$):  We use $ |v_x| \le C ( |\phi_x| + |\ov_x| )$ and \eqref{eq: L^infty bounds} to derive
\begin{align*}
K_2 &\le C \int_\mathbb{R} |v_x| ( |\psi_x| |\psi_{xx}| + |\psi_x| |\omega_{xx}| + |\omega_x| |\psi_{xx}|) \, dx \\
&\le C (\varepsilon_1 + \delta_0 ) \left( \|\psi_{x}\|_{H^1} + \|\omega_{x}\|_{H^1}\right) \le \tau \mathcal{D}_{u_2} + C (\varepsilon_1 + \delta_0 ) \left( D_{u_1} + D_{w_1} + D_{w_2} \right).
\end{align*}

\noindent $\bullet$ (Estimate of $K_3$): We use $|\ow_{xx}| \le C  |\ov_x|$, $|\ou_{xx}| \le |\ou_x|$, and $|\ou_x| \sim |\ov_x|$  to estimate $K_{3}$ as
\begin{align*} 
K_3 &\le C \int_\mathbb{R} |\ov_x| |\phi| ( |\psi_{xx}| + |\omega_{xx}| ) \, dx \le C \int_\mathbb{R} |\ov_x|^{3/2} |\phi|^2 \, dx + C \int_\mathbb{R} |\ov_x|^{1/2}  ( |\psi_{xx}|^2 + |\omega_{xx}|^2 ) \, dx \\
&\le \tau \mathcal{D}_{u_2} + C \sqrt{\delta_0} (G^S_v+ G^R) .
\end{align*}

\noindent $\bullet$ (Estimate of $K_4$): Similarly, we have
\begin{align*}
K_4 &\le C \int_\mathbb{R}( |\phi_x| + | \ov_x| |\phi|)|\psi_{xx}| \, dx + C \int_\mathbb{R} |\ov_x| ( |\phi_x| + | \ov_x| |\phi|) |\omega_{xx}| \, dx \\
&\le \tau\int_{\mathbb{R}} |\psi_{xx}|^2 dx + C \int_{\mathbb{R}} |\phi_x|^2 dx + C \int_\mathbb{R} |\ov_x|^2 (|\phi|^2+|\omega_{xx}|^2) \, dx \\ 
&\le \tau \mathcal{D}_{u_2} + C \delta_0 \left(G^S_v+G^R+ D_{w_2} \right).
\end{align*}

\noindent $\bullet$ (Estimate of $K_5$): Finally, we estimate $K_5$ as
\begin{align*}
K_5&=\int_\mathbb{R} (Q_1 \psi_{xx} + Q_2 \omega_{xx}) dx \le C(\|Q_1^I\|_{L^2}^2 + \|Q_1^R\|_{L^2}^2 ) + \tau \mathcal{D}_{u_2} + \frac{C}{\varepsilon_1} \|Q_2\|_{L^2}^2 + \varepsilon_1 \|\omega_{xx}\|_{L^2}^2.
\end{align*}
Therefore, combining all estimates for $K_i$ with small enough $\tau$ so that $5\tau<\frac{1}{2}$, we obtain
\begin{align}
\begin{aligned}\label{est:psi_x}
&\frac{d}{dt}\int_{\mathbb{R}} \frac{|\psi_x|^2}{2} \,dx+\frac{d}{dt}\int_{\mathbb{R}} \frac{|\omega_x|^2}{2} \,dx + \frac{1}{2} \mathcal{D}_{u_2} \\ 
&\le \delta_S |\dot{X}|^2 +C \| \phi_x\|_{L^2}^2+C (\sqrt{\delta_0} +\varepsilon_1)  \left( G^S_v+G^R+D_{u_1} + D_{w_1} + D_{w_2} \right) \\
&\quad + C\left(\|Q_1^I\|_{L^2}^2 + \|Q_1^R\|_{L^2}^2+ \frac{1}{\varepsilon_1}\|Q_2\|_{L^2}^2\right).
\end{aligned}
\end{align}
We note that
\[\|Q_1^R\|_{L^2}^2 \le C \left( \|\ur_{xx}\|_{L^2}^2 +\|(\vr_x,\ur_x)\|_{L^4}^4 +\|\vr_{xxx}\|_{L^2}^2+ \|\vr_x\|_{L^6}^6 \right).\]
Since 
\begin{align*}
&\|\ur_{xx}\|_{L^2} \le C \min \left\{  \delta_R, \frac{1}{1+t} \right\}, \quad  \|(\vr_{x},\ur_{x})\|_{L^4} \le C \min \left\{  \delta_R, \frac{\delta_R^{1/4}}{(1+t)^{3/4}} \right\}, \\
&\|\vr_{xxx}\|_{L^2} \le C \min \left\{  \delta_R, \frac{1}{1+t}+\frac{\delta_R^{-1/2}}{(1+t)^{3/2}} \right\}, \quad  \|\vr_{x}\|_{L^6} \le C \min \left\{  \delta_R, \frac{\delta_R^{1/6}}{(1+t)^{5/6}} \right\},
\end{align*}
we have
\begin{equation} \label{Q1R-L2est}
\int_0^t \|Q_1^R\|_{L^2}^2 ds \le C  \left( \delta_R + \delta_R^3 +\delta_R +\delta_R^5\right) \le C \delta_R.   
\end{equation}	
We integrate \eqref{est:psi_x} from $0$ to $t$ for any $t\in \left[0, T  \right]$ and use \eqref{est-interaction} and \eqref{Q1R-L2est} to get
\begin{align*}
&\frac{1}{2}\norm{ \big(\psi_x, \omega_x \big)}_{L^2}^2+ \frac{1}{2} \int_0^t \mathcal{D}_{u_2} \, d s \\
&\le \frac{1}{2} \norm{ \big(\psi_{0x}, \omega_{0x} \big)}_{L^2}^2+\delta_S \int_0^t |\dot{X}|^2 \, d s + C \int_0^t \| \phi_x\|_{L^2}^2 \, d s \\ 
& \quad +C (\sqrt{\delta_0} +\varepsilon_1) \int_0^t  \left( G^S_v+G^R + D_{u_1}+D_{w_1} + D_{w_2} \right) \, d s  +C \delta_R.
\end{align*}
Since $D_{u_2} \sim \mathcal{D}_{u_2}$, this completes the proof of Lemma \ref{lem: est-0}.

\section{Proof of Lemma \ref{lem: est-1}}\label{app:proof_high_order_2}
\setcounter{equation}{0}
In this appendix, we present the proof of Lemma \ref{lem: est-1}. We multiply $\eqref{Underlying wave with shift}_2$ by $\omega$ and use the definition of $w$ and $\ow$ to obtain
\begin{equation*} %\label{omega_t psi}
\omega \psi_t - \left( \frac{p'(v) v^{5/2}}{\sqrt{\kappa}}w-\frac{p'(\ov)\ov^{5/2}}{\sqrt{\ok}}\overline{w} \right) \omega - \left( \frac{\sqrt{\kappa}}{v^{5/2}} w_x - \frac{\sqrt{\ok}}{\ov^{5/2}} \ow_x\right)_x \omega =\left( \frac{\mu}{v} u_x-\frac{\m}{\ov} \ou_x \right)_x \omega + \dot{X}   \us_x \omega -Q_1 \omega.
\end{equation*}
Similarly, multiplying \eqref{Underlying wave with shift}$_3$ by $\psi$, we obtain 
\begin{align*}
\omega_t \psi = -\left( \frac{\sqrt{\kappa}}{v^{5/2}}u_x-\frac{\sqrt{\ok}}{\ov^{5/2}} \ou_x \right)_x \psi + \dot{X} \ws_x \psi -Q_2 \psi.
\end{align*}
Then, we add the above equations, integrate over $\mathbb{R}$ to get
\begin{equation} \label{computation 1}
\begin{aligned}
\frac{d}{dt}& \int_\mathbb{R} \omega \psi \, dx  -\int_\mathbb{R}  \frac{p'(v) v^{5/2}}{\sqrt{\kappa}} \omega^2 \, dx + \int_\mathbb{R} \frac{\sqrt{\kappa}}{v^{5/2}} \omega_x^2 \, dx \\ 
&=\dot{X} \int_\mathbb{R} (\ws_x \psi + \us_x \omega) \, dx+\int_\mathbb{R} \left( \frac{\mu}{v} \psi_x\right)_x \omega \, dx \\
&\quad+\int_\mathbb{R} \ow \left(  \frac{p'(v) v^{5/2}}{\sqrt{\kappa}}-\frac{p'(\ov)\ov^{5/2}}{\sqrt{\ok}} \right) \omega \, dx -\int_\mathbb{R} \ow_x \left( \frac{\sqrt{\kappa}}{v^{5/2}}-\frac{\sqrt{\ok}}{\ov^{5/2}} \right) \omega_x \, dx \\ 
&\quad+\int_\mathbb{R} \frac{\sqrt{\kappa}}{v^{5/2}} \psi_x^2 \, dx + \int_\mathbb{R}  \ou_x \left(\frac{\sqrt{\kappa}}{v^{5/2}}-\frac{\sqrt{\ok}}{\ov^{5/2}} \right) \psi_x \, dx  - \int_\mathbb{R} \ou_x \left( \frac{\mu}{v}-\frac{\m}{\ov} \right) \omega_x \, dx -\int_\mathbb{R} (Q_1 \omega +Q_2 \psi) \, dx.
\end{aligned}
\end{equation}
We use $\eqref{Underlying wave with shift}_1$ and the following identity
\begin{equation*}
\phi_x =-\left(\frac{v^{5/2}}{\sqrt{\kappa}} w - \frac{\ov^{5/2}}{\sqrt{\ok}} \ow \right)=-\left(\frac{v^{5/2}}{\sqrt{\kappa}} \omega - \ow \left(\frac{v^{5/2}}{\sqrt{\kappa}}-\frac{\ov^{5/2}}{\sqrt{\ok}}  \right) \right)
\end{equation*}
to write the second term of the right-hand side of  \eqref{computation 1} as 
\begin{equation} \label{computation 2}
\begin{aligned}
&\int_\mathbb{R} \left( \frac{\mu}{v} \psi_x\right)_x \omega \, dx =\int_\mathbb{R} \frac{\mu}{v} \psi_{xx} \omega \, dx+\int_\mathbb{R} \psi_x \left( \frac{\mu}{v} \right)_x \omega \, dx \\
&=\int_\mathbb{R} \frac{\mu}{v} \phi_{tx} \omega \, dx - \dot{X} \int_\mathbb{R} \frac{\mu}{v} \vs_{xx} \omega \, dx +\int_\mathbb{R} \psi_x \left( \frac{\mu}{v} \right)_x \omega \, dx \\ 
&=-\int_\mathbb{R} \frac{\mu}{v} \left(\frac{v^{5/2}}{\sqrt{\kappa}} \omega -\ow \left(\frac{v^{5/2}}{\sqrt{\kappa}}-\frac{\ov^{5/2}}{\sqrt{\ok}} \right) \right)_t \omega \, dx - \dot{X} \int_\mathbb{R} \frac{\mu}{v} \vs_{xx} \omega \, dx +\int_\mathbb{R} \psi_x \left( \frac{\mu}{v} \right)_x \omega \, dx.
\end{aligned}
\end{equation}
However, the first term of the right-hand side of \eqref{computation 2} becomes
\begin{equation} \label{computation 3}
\begin{aligned}
&-\int_\mathbb{R} \frac{\mu}{v} \left(\frac{v^{5/2}}{\sqrt{\kappa}} \omega -\ow \left(\frac{v^{5/2}}{\sqrt{\kappa}}-\frac{\ov^{5/2}}{\sqrt{\ok}} \right) \right)_t \omega \, dx \\
&=- \int_\mathbb{R} \frac{\mu }{\sqrt{\kappa}}v^{3/2} \left(\frac{\omega^2}{2}\right)_t \, dx - \int_\mathbb{R}   \frac{\mu}{v} \partial_v \left( \frac{v^{5/2}}{\sqrt{\kappa}} \right) v_t \omega^2 \, dx  -\int_\mathbb{R}\frac{\mu}{v} \left( \left(\frac{v^{5/2}}{\sqrt{\kappa}}-\frac{\ov^{5/2}}{\sqrt{\ok}}\right) \ow \right)_t \omega \, dx   \\
&=-\frac{1}{2} \frac{d}{dt} \int_\mathbb{R} \frac{\mu }{\sqrt{\kappa}}v^{3/2} \omega^2 \, dx - \int_\mathbb{R}    \left( \frac{\mu}{v} \partial_v \left( \frac{v^{5/2}}{\sqrt{\kappa}} \right)-\frac{1}{2}  \partial_v \left( \frac{\mu }{\sqrt{\kappa}}v^{3/2} \right) \right) v_t \omega^2 \, dx \\
&\quad -\frac{d}{dt}\int_\mathbb{R} \frac{\mu}{v}  \left(\frac{v^{5/2}}{\sqrt{\kappa}}-\frac{\ov^{5/2}}{\sqrt{\ok}}\right) \ow  \omega \, dx+\int_\mathbb{R} \left(\frac{v^{5/2}}{\sqrt{\kappa}}-\frac{\ov^{5/2}}{\sqrt{\ok}}\right) \ow  \left(  \frac{\mu}{v} \omega \right)_t \, dx .
\end{aligned}
\end{equation}
Therefore, we combine \eqref{computation 1}, \eqref{computation 2}, and \eqref{computation 3} to obtain
\begin{equation*} %\label{eq: High order-main}
\begin{aligned}
&\frac{1}{2} \frac{d}{dt} \int_\mathbb{R} \frac{\mu }{\sqrt{\kappa}}v^{3/2} \omega^2 \, dx +\frac{d}{dt} \int_\mathbb{R} \omega \psi \, dx +\frac{d}{dt}\int_\mathbb{R} \frac{\mu}{v}  \left(\frac{v^{5/2}}{\sqrt{\kappa}}-\frac{\ov^{5/2}}{\sqrt{\ok}}\right) \ow  \omega \, dx +\mathcal{G}_w +\mathcal{D}_{w_1} = \sum_{i=1}^{7} S_i, 
\end{aligned}
\end{equation*}
where
\begin{align*}
\mathcal{G}_w :=-\int_\mathbb{R}  \frac{p'(v) v^{5/2}}{\sqrt{\kappa}} \omega^2 \, dx,\quad\mathcal{D}_{w_1} :=  \int_\mathbb{R} \frac{\sqrt{\kappa}}{v^{5/2}} \omega_x^2 \, dx
\end{align*}
and
\begin{align*}
S_1 &:=\dot{X}(t)\int_\mathbb{R}  \left( \ws_x \psi + \us_x \omega- \frac{\mu}{v}\vs_{xx}  \omega \right) \, dx,\\
S_2 &:= - \int_\mathbb{R}   \left( \frac{\mu}{v} \partial_v \left( \frac{v^{5/2}}{\sqrt{\kappa}} \right)-\frac{1}{2}  \partial_v \left( \frac{\mu }{\sqrt{\kappa}}v^{3/2} \right) \right) v_t \omega^2   \, dx,\quad S_3 := \int_\mathbb{R} \left(\frac{v^{5/2}}{\sqrt{\kappa}}-\frac{\ov^{5/2}}{\sqrt{\ok}}\right) \ow  \left(  \frac{\mu}{v} \omega \right)_t \, dx, \\ 
S_4 &:=  \int_\mathbb{R}  \ow \left(  \frac{p'(v) v^{5/2}}{\sqrt{\kappa}}-\frac{p'(\ov)\ov^{5/2}}{\sqrt{\ok}} \right) \omega  \, dx - \int_\mathbb{R}  \ow_x \left( \frac{\sqrt{\kappa}}{v^{5/2}}-\frac{\sqrt{\ok}}{\ov^{5/2}} \right) \omega_x  \, dx \\
&\quad - \int_\mathbb{R}  \ou_x \left( \frac{\mu}{v}-\frac{\m}{\ov} \right) \omega_x  \, dx + \int_\mathbb{R}  \ou_x \left(\frac{\sqrt{\kappa}}{v^{5/2}}-\frac{\sqrt{\ok}}{\ov^{5/2}} \right) \psi_x \, dx,\\ 
S_{5} &:=  \int_\mathbb{R} \frac{\sqrt{\kappa}}{v^{5/2}} \psi_x^2 \, dx, \quad S_{6} :=  \int_\mathbb{R} \psi_x \left( \frac{\mu}{v} \right)_x \omega  \, dx  \quad 
S_{7} := -\int_\mathbb{R} (Q_1 \omega + Q_2 \psi) \, dx. 
\end{align*}

\noindent $\bullet$ (Estimate of $S_1$): We use H\"older's inequality, $|\ws_x| \le C \left( |\vs_x|^2 +|\vs_{xx}| \right) \le C \delta_S |\ov_x| $, and Young's inequality to estimate $S_1$ as
\begin{align*}
S_1 &\le C  |\dot{X}| \left( \sqrt{ \int_\mathbb{R} |\ws_x| \, dx}\sqrt{\int_\mathbb{R} |\ws_x| |\psi|^2 \, dx }+\sqrt{ \int_\mathbb{R} |\us_x|^2 \, dx}\sqrt{\int_\mathbb{R} |\omega|^2 \, dx}+\sqrt{ \int_\mathbb{R} |\vs_{xx}|^2 \, dx}\sqrt{\int_\mathbb{R} |\omega|^2 \, dx} \right) \\
&\le C \delta_S |\dot{X}| \left(\sqrt{\int_\mathbb{R} |\vs_x| |\psi|^2 \, dx }+\sqrt{\int_\mathbb{R} |\omega|^2 \, dx}\right) \le \frac{\delta_S}{2} |\dot{X}|^2 + C \delta_S \left( G^S+\mathcal{G}_w \right) \\ 
&\le \frac{\delta_S}{2}|\dot{X}|^2 +\tau \mathcal{G}_w + C \delta_S G^S.
\end{align*}
\noindent$\bullet$ (Estimate of $S_2$):
We estimate $S_2$ by using $v_t=u_x=\psi_x + \ou_x$ as
\begin{align*}
S_2 &\le C \int_\mathbb{R} |u_x| |\omega|^2 \, dx \le C \int_\mathbb{R} |\psi_x| |\omega|^2 \, dx + C \int_\mathbb{R} |\ou_x| |\omega|^2 \, dx .
\end{align*}
Then, we use $|\omega| \le C  \left( | \phi_x| + |\ov_x| |\phi | \right) $ and \eqref{eq: L^infty bounds} to obtain
\begin{align*}
S_2 &\le C  \int_\mathbb{R} |\psi_x| \left( | \phi_x| + |\ov_x| |\phi| \right)  |\omega| \, dx + C \delta_0 \int_\mathbb{R} |\omega|^2 \, dx\\
&\le \int_\mathbb{R} |\psi_x| |\phi_x| |\omega| \, dx + C \int_\mathbb{R} |\psi_x| |\ov_x| |\phi| |\omega| \, dx + C \delta_0 \mathcal{G}_w\\
&\le C \varepsilon_1 \left( \int_\mathbb{R} |\psi_x| |\omega| \, dx + \int_\mathbb{R} |\ov_x| |\psi_x| |\omega| \, dx \right) + C \delta_0 \mathcal{G}_w\\
&\le C \varepsilon_1 \left( \int_\mathbb{R} |\psi_x|^2 \, dx + \int_\mathbb{R}  |\omega|^2 \, dx   \right) + C \delta_0 \mathcal{G}_w  \le \tau \mathcal{G}_w +  C \varepsilon_1 D_{u_1}.
\end{align*}
\noindent $\bullet$ (Estimate of $S_3$): We split $S_{3}$ into $S_{31}$ and $S_{32}$ as 
\begin{align*}
S_{3} &=   \int_\mathbb{R} \left(\frac{v^{5/2}}{\sqrt{\kappa}}-\frac{\ov^{5/2}}{\sqrt{\ok}}\right) \ow \left( \frac{\mu}{v} \right)_t \omega  \,dx +\int_\mathbb{R} \left(\frac{v^{5/2}}{\sqrt{\kappa}}-\frac{\ov^{5/2}}{\sqrt{\ok}}\right) \ow  \frac{\mu}{v}  \omega_t \,dx=: S_{31} + S_{32}.
\end{align*}
For $S_{31}$, we use $|\ow| \le C |\ov_x|$, $|v_t|=|u_x| \le |\psi_x|+|\ou_x|$, and \eqref{eq: L^infty bounds} to have
\begin{align*}
S_{31} &\le C \int_\mathbb{R} |\ov_x| |\phi| |\psi_x| |\omega| \, dx+ C \int_\mathbb{R} |\ov_x| |\phi|  |\ou_x| |\omega| \, dx \\ 
&\le C \varepsilon_1 \left( \int_\mathbb{R} |\ov_x||\psi_x|^2 \, dx + \int_\mathbb{R} |\ov_x| |\omega|^2 \, dx \right)+C \left( \int_\mathbb{R} |\ov_x|^2|\phi|^2 \, dx + \int_\mathbb{R} |\ou_x|^2 |\omega|^2 \, dx \right) \\ 
&\le C \varepsilon_1 \delta_0 (D_{u_1}+\mathcal{G}_w) + C \delta_0 ( G^S_v+G^R) \le \tau \mathcal{G}_w + C \delta_0 (D_{u_1} + G^S_v+G^R).
\end{align*}
To estimate $S_{32}$, we first bound $\omega_t$ by using $\eqref{Underlying wave with shift}_2$, $v_x = \phi_x + \ov_x$, Lemma \ref{lem:rarefaction_property} and  Lemma \ref{lem:shock-property} as
\begin{equation}\label{eq: omega_t}
\begin{aligned}
\|\omega_t\|_{L^2}^2 
&\le C \Big( \norm{\phi_x}_{L^\infty}^2 \norm{\psi_x}_{L^2}^2+ \norm{\ov_x}_{L^\infty}^2 \big(\norm{\phi_x}_{L^2}^2 + \norm{\psi_x}_{L^2}^2 \big)  +\|\psi_{xx}\|_{L^2}^2 \\
& \qquad +\norm{\ov_x \phi}_{L^2}^2 + |\dot{X}|^2 \norm{\ws_x}_{L^2}^2 +\|Q_2\|_{L^2}^2 \Big) \le C \varepsilon_1.
\end{aligned}
\end{equation}
Next, $S_{32}$ is estimated by using $|\ow| \le |\ov_x|$ and Young's inequality as 
\begin{align*}
S_{32} &\le C \int_\mathbb{R} |\phi| |\ow| |\omega_t| \, dx \le C \int_\mathbb{R} |\phi| |\ov_x| |\omega_t| \, dx \\
&\le C \int_\mathbb{R} |\ov_x|^{3/2} |\phi|^2 \, dx +  C \int_\mathbb{R} |\ov_x|^{1/2} |\omega_t|^2 \, dx=: S_{321}+S_{322}. 
\end{align*}
Due to Lemma \ref{lem: Useful bad terms estimates}, we obtain
\begin{equation} \label{eq: cS_21}
\begin{aligned}
S_{321} \le C \delta_0 \left( G^S_v+G^R \right).
\end{aligned}
\end{equation}
On the other hand, we estimate $S_{322}$ by using \eqref{eq: omega_t} and Lemma \ref{lem: Useful bad terms estimates} as 
\begin{align*}
S_{322} \le C \norm{\ov_x}_{L^\infty}^{1/2} &\Big( \norm{\phi_x}_{L^\infty}^2 \norm{\psi_x}_{L^2}^2+ \norm{\ov_x}_{L^\infty}^2 \big(\norm{\phi_x}_{L^2}^2 + \norm{\psi_x}_{L^2}^2 \big)  +\|\psi_{xx}\|_{L^2}^2\\ 
& \quad +\delta_0 ( G^S_v+G^R) + |\dot{X}|^2 \norm{\ws_x}_{L^2}^2 +\|Q_2\|_{L^2}^2 \Big).
\end{align*}
In addition, we use \eqref{eq: L^infty bounds} and $|\ws_x| \le C (|\vs_x|^2 +|\vs_{xx}|)\le C |\vs_x|$ in Lemma \ref{lem:shock-property} to obtain 
\begin{equation} \label{eq: cS_22 }
\begin{aligned}
S_{322}
& \le C \sqrt{\delta_0} \left( ( \varepsilon_1^2 +\delta_0^2) D_{u_1}+ \delta_0^2  \norm{\phi_x}_{L^2}^2 +D_{u_2} + \delta_0 (G^S_v+G^R)+  | \dot{X}|^2 \norm{\vs_x}_{L^2}^2+\|Q_2\|_{L^2}^2 \right)\\ 
&\le  C \delta_S^2 |\dot{X}|^2 + C \sqrt{\delta_0} \left(\norm{\phi_x}_{L^2}^2+ D_{u_1} +D_{u_2} +G^S_v+G^R +\|Q_2\|_{L^2}^2 \right).
\end{aligned}
\end{equation}
Combining \eqref{eq: cS_21} and \eqref{eq: cS_22 }, we have
\begin{align*}
S_{32} \le \frac{\delta_S}{2} |\dot{X}|^2 + C \sqrt{\delta_0} \left( \|\phi_x\|_{L^2}^2 + D_{u_1}+D_{u_2} +G^S_v+G^R  +\|Q_2\|_{L^2}^2 \right).
\end{align*}
Therefore, we estimate $S_{3}$ as
\begin{align*}
S_{3} \le \frac{\delta_S}{2} |\dot{X}|^2 + \tau \mathcal{G}_w + C \sqrt{\delta_0} \left( \|\phi_x\|_{L^2}^2 + D_{u_1}+D_{u_2} +G^S_v+G^R   +\|Q_2\|_{L^2}^2\right).
\end{align*}
\noindent $\bullet$ (Estimate of $S_4$): Using $|\ow| \le C |\ov_x| $, and Lemma \ref{lem: Useful bad terms estimates}, we estimate $S_4$ as
\begin{align*}
S_4 &\le C \int_\mathbb{R} |\ov_x| |\phi| \left( |\omega|+ |\omega_x| + |\psi_x| \right) \, dx \\ 
&\le  C \int_\mathbb{R} |\ov_x|^{3/2} |\phi|^2 \, dx + C \int_\mathbb{R} |\ov_x|^{1/2} \left( |\omega|^2+ |\omega_x|^2 + |\psi_x|^2 \right)\, dx \\ 
&\le C \delta_0^{1/2} (G^S_v + G^R) +C \delta_0^{1/2} (G_w + D_{w_1} + D_{u_1}).
\end{align*}

\noindent $\bullet$ (Estimate of $S_{5}$): We simply estimate $S_{5}$ as
\begin{align*}
S_{5} \le C \int_\mathbb{R} |\psi_x|^2 \, dx.
\end{align*}

\noindent $\bullet$ (Estimate of $S_6$): We estimate $S_{6}$ by using $v_x= \phi_x + \ov_x$ and \eqref{eq: L^infty bounds} as
\begin{align*}
S_{6} & \le C \int_{\mathbb{R}} |v_x| |\psi_x|  |\omega| \, dx 
\le \int_{\mathbb{R}} | \phi_x|  |\psi_x| | \omega | \, dx + \int_{\mathbb{R}} |\ov_x| |\psi_x |  |\omega | \, dx \\ 
&\le C\left( \norm{\phi_x}_{L^\infty} \norm{\psi_x}_{L^2 } \norm{\omega}_{L^2}+\norm{\ov_x}_{L^\infty}  \norm{\psi_x}_{L^2} \norm{\omega}_{L^2} \right)\le C (\varepsilon_1+\delta_0 ) \left(  D_{u_1}  + \mathcal{G}_w \right) \\ 
&\le \tau \mathcal{G}_w + C (\varepsilon_1+\delta_0) D_{u_1}.
\end{align*}
\noindent$\bullet$ (Estimate of $S_{7}$)
Finally, we estimate $S_{7}$ as
\begin{align*}
S_{7} \le C \int_\mathbb{R} (|Q_1| |\omega| + |Q_2| |\psi|) dx\le  C \left(\|Q_1^I\|_{L^2}^2 + \|Q_1^R\|_{L^2}^2 + \varepsilon_1 \|Q_2\|_{L^1} \right) + \tau \mathcal{G}_w .
\end{align*}
We gather all the estimates for $S_i$ and use the smallness of $\tau$, $\delta_0$ and $\varepsilon_1$ to derive
\begin{align*}
&\frac{1}{2}\frac{d}{dt} \int_\mathbb{R} \frac{\mu }{\sqrt{\kappa}}v^{3/2} \omega^2 \, dx + \frac{d}{dt} \int_\mathbb{R} \omega \psi \, dx + \frac{d}{dt} \int_\mathbb{R} \frac{\mu}{v} (v^{5/2}-\ov^{5/2}) \ow \omega \, dx + \frac{1}{2} \left( \mathcal{G}_w + \mathcal{D}_{w_1} \right) \\ 
&\le \delta_S | \dot{X}|^2 + C \|\psi_x \|_{L^2}^2 
+ C  \sqrt{\delta_0} \left( \| \phi_x \|_{L^2}^2 +D_{u_2} +G^S_v + G^R + \|Q_2\|_{L^2}^2  \right) \\
&\quad +C \left(\|Q_1^I\|_{L^2}^2 + \|Q_1^R\|_{L^2}^2 + \varepsilon_1 \|Q_2\|_{L^1} \right).
\end{align*}
After intergrating the above estimate over $[0,t]$, we have
\begin{align*}
\begin{aligned} %\label{est-vw}
&\int_\mathbb{R} \frac{v^{3/2} \omega^2}{2} \, dx +  \int_\mathbb{R} \psi \omega \, dx + \int_\mathbb{R} \frac{\mu}{v} (v^{5/2} -\ov^{5/2}) \ow \omega \, dx + \frac{1}{2} \int_0^t \left(\mathcal{G}_w + \mathcal{D}_{w_1} \right) \, d s \\ 
&\le  \int_\mathbb{R} \frac{v_0^{3/2} \omega_0^2}{2} \, dx +  \int_\mathbb{R} \psi_0 \omega_0 \, dx + \int_\mathbb{R} \frac{1}{v_0} (v_0^{5/2} -\ov(0,\cdot)^{5/2}) \ow(0,\cdot) \omega_0 \, dx \\
& \quad + \delta_S \int_0^t| \dot{X}|^2\,d s + C \int_0^t D_{u_1} \, d s +C \sqrt{\delta_0} \int_0^t \left( \| \phi_x \|_{L^2}^2 +D_{u_2} +G^S_v+G^R  \right) \, d s + C \delta_R. 
\end{aligned}
\end{align*}
Since $G_{w}\sim\mathcal{G}_{w}$, $D_{w_1} \sim \mathcal{D}_{w_1}$, and
\begin{align*}
\|\omega\|_{L^2}^2 \sim  \int_\mathbb{R} \frac{v^{3/2} |\omega|^2}{2} \, dx, \quad \left|\int_\mathbb{R} \frac{\mu}{v}(v^{5/2}-\ov^{5/2})\ow \omega \, dx\right|\le C\int_\mathbb{R} |\phi \ow  \omega| \, dx,
\end{align*} 
we complete the proof of Lemma \ref{lem: est-1}.

\section{Prof of Lemma \ref{lem: est-2}}\label{app:proof_high_order_3}
\setcounter{equation}{0}
In this appendix, we provide the proof of Lemma \ref{lem: est-2}. We differentiate $\eqref{Underlying wave with shift}_2$ with respect to $x$ and then multiply by $\omega_x$ to obtain
\begin{equation} \label{psi_tx omega_x}
\begin{aligned}
& \psi_{tx} \omega_x - \left(  \frac{p'(v) v^{5/2}}{\sqrt{\kappa}} \omega + \ow \left(  \frac{p'(v) v^{5/2}}{\sqrt{\kappa}}-\frac{p'(\ov)\ov^{5/2}}{\sqrt{\ok}} \right) \right)_x \omega_x-\left( \frac{\sqrt{\kappa}}{v^{5/2}}\omega_x + \ow_x \left( \frac{\sqrt{\kappa}}{v^{5/2}}-\frac{\sqrt{\ok}}{\ov^{5/2}} \right) \right)_{xx} \omega_x \\
&\; =\left( \frac{\mu}{v} \psi_x+\ou_x \left( \frac{\mu}{v}-\frac{\m}{\ov} \right) \right)_{xx} \omega_x + \dot{X} \us_{xx} \omega_x -(Q_1)_x\omega_x.
\end{aligned}
\end{equation}
Similarly, we use $\eqref{Underlying wave with shift}_3$  and to obtain
\begin{equation} \label{omega_tx psi_x}
\begin{aligned}
& \omega_{tx} \psi_x = -\left( \frac{\sqrt{\kappa}}{v^{5/2}} \psi_x+\ou_x \left( \frac{\sqrt{\kappa}}{v^{5/2}}-\frac{\sqrt{\ok}}{\ov^{5/2}} \right) \right)_{xx} \psi_x + \dot{X} \ws_{xx} \psi_x -(Q_2)_x \psi_x.
\end{aligned}
\end{equation}
Combining \eqref{psi_tx omega_x} and \eqref{omega_tx psi_x}, we obtain
\begin{align*}
\frac{d}{dt}\int_{\mathbb{R}}\psi_x\omega_x \, dx+\mathcal{G}_{w_2}+\mathcal{D}_{w_2}=\sum\limits_{i=1}^{6}T_i,
\end{align*}
where
\begin{align*}
\mathcal{G}_{w_2}:=-\int_{\mathbb{R}}  \frac{p'(v) v^{5/2}}{\sqrt{\kappa}} |\omega_x|^2dx,\quad
\mathcal{D}_{w_2}:=\int_{\mathbb{R}} \frac{\sqrt{\kappa}}{v^{5/2}} |\omega_{xx}|^2 dx,
\end{align*}
and
\begin{align*}
T_1&:=\dot{X} \int_\mathbb{R} \left( \us_{xx} \omega_x+ \ws_{xx} \psi_x \right) \, dx, \\
T_2&:=\int_\mathbb{R} \left( \frac{p'(v) v^{5/2}}{\sqrt{\kappa}}\right)_x \omega \omega_x \, dx -\int_\mathbb{R} \omega_x \left( \frac{\sqrt{\kappa}}{v^{5/2}} \right)_x \omega_{xx} \, dx + \int_\mathbb{R} \psi_x \left( \frac{\mu}{v} \right)_x \omega_{xx} \, dx +\int_\mathbb{R} \psi_{xx} \psi_x \left(\frac{\sqrt{\kappa}}{v^{5/2}}\right)_x \, dx, \\
T_3&:=\int_\mathbb{R} \ow_x \left( \frac{p'(v) v^{5/2}}{\sqrt{\kappa}} -\frac{p'(\ov)\ov^{5/2}}{\sqrt{\ok}} \right) \omega_x \, dx -\int_\mathbb{R} \ow_{xx} \left( \frac{\sqrt{\kappa}}{v^{5/2}}-\frac{\sqrt{\ok}}{\ov^{5/2}}\right) \omega_{xx} \, dx \\ 
&+\int_{\mathbb{R}} \ou_{xx} \left( \frac{\mu}{v}-\frac{\m}{\ov} \right)\omega_{xx} \, d x+\int_\mathbb{R} \psi_{xx}\ou_{xx} \left( \frac{\sqrt{\kappa}}{v^{5/2}}-\frac{\sqrt{\ok}}{\ov^{5/2}} \right) \, dx, \\
T_4&:=\int_\mathbb{R} \ow \left( \frac{p'(v) v^{5/2}}{\sqrt{\kappa}} -\frac{p'(\ov)\ov^{5/2}}{\sqrt{\ok}} \right)_x \omega_x \, dx-\int_\mathbb{R} \ow_x \left( \frac{\sqrt{\kappa}}{v^{5/2}} -\frac{\sqrt{\ok}}{\ov^{5/2}}\right)_x \omega_{xx} \, dx \\
&+\int_\mathbb{R} \ou_x \left( \frac{\mu}{v}-\frac{\m}{\ov} \right)_x \omega_{xx} \, dx+\int_\mathbb{R} \psi_{xx}\ou_x \left(\frac{\sqrt{\kappa}}{v^{5/2}}-\frac{\sqrt{\ok}}{\ov^{5/2}}\right)_x  \, dx,\\
T_{5}&:=-\int_\mathbb{R} \frac{\mu}{v}\psi_{xx} \omega_{xx} \, dx+\int_\mathbb{R}  \frac{\sqrt{\kappa}}{v^{5/2}} |\psi_{xx}|^2 \, dx, \quad 
T_{6}:=-\int_\mathbb{R}( (Q_1)_x \omega_x + (Q_2)_x \psi_x) \, dx.
\end{align*}

\noindent $\bullet$ (Estimate of $T_1$): We use 
\[|\ws_{xx}| \le C (|\vs_x|^3 + |\vs_x||\vs_{xx}| + |\vs_{xxx}|) \le C \delta_S |\vs_x|, \quad |\us_{xx}| \le C \delta_S |\us_x|\]
and Young's inequality to obtain
\begin{align*}
T_1
&\le C \delta_S |\dot{X}| \left( \sqrt{\int_\mathbb{R} |\us_x|^2 \, dx} \sqrt{\int_\mathbb{R} |\omega_x|^2 \, dx } + \sqrt{\int_\mathbb{R} |\vs_x|^2 \, dx} \sqrt{\int_\mathbb{R} |\psi_x|^2 \, dx } \, \right) \\ 
&\le C \delta_S^{\frac{5}{2}} |\dot{X}| \left( \norm{\omega_x}_{L^2} + \norm{\psi_x}_{L^2} \right) \le \delta_S |\dot{X}|^2 + \tau \mathcal{G}_{w_2} + C \delta_S^4 D_{u_1}.
\end{align*}
\noindent $\bullet$ (Estimate of $T_2$):  We use $|v_x| \le C (|\phi_x| + |\ov_x|)$, \eqref{eq: L^infty bounds}, and \eqref{eq: L^infty bounds-2} to estimate $T_2$ as
\begin{align*}
T_2 &\le C \int_{\mathbb{R}} |v_x| \Big(|\omega| |\omega_x| + |\omega_x| |\omega_{xx}| +|\psi_x||\omega_{xx}| + |\psi_x| |\psi_{xx}| \Big)\, dx \\
&\le C(\varepsilon_1 +\delta_0) \left( \norm{\psi_x}_{H^1}^2 + \norm{\omega}_{H^2}^2  \right) \le C(\varepsilon_1 +\delta_0) \left( D_{u_1} + D_{u_2} +G_w  +D_{w_1}+D_{w_2}\right).
\end{align*}

\noindent $\bullet$ (Estimate of $T_3$): Using the definition of $\ow$, Lemma \ref{lem:rarefaction_property}, and Lemma \ref{lem:shock-property},
\begin{align*}
T_3 &\le C \int_\mathbb{R}  (|v^R_x|+|v^S_x|) |\phi| ( |\omega_x| + |\omega_{xx}| + |\psi_{xx}| ) \, dx \\ 
&\le C  \int_{\mathbb{R}} (|v^R_x|+|v^S_x|)^{3/2}|\phi|^2 \, dx + C\int_\mathbb{R} (|v^R_x|+|v^S_x|)^{1/2} (|\omega_x|^2 +|\omega_{xx}|^2 + |\psi_{xx}|^2 ) \, dx \\ 
&\le C \sqrt{\delta_0} \left( G^S_v + G^R + D_{u_2} \right) + \tau (D_{w_1}+D_{w_2}).
\end{align*}

\noindent $\bullet$ (Estimate of $T_4$): Similar to $T_3$, we estimate $T_4$ as
\begin{align*}
T_4 &\le C \int (|v^R_x|+|v^S_x|) (|\phi_x|+|\ov_x||\phi|) (|\omega_x| + |\omega_{xx}| + |\psi_{xx}|) \, dx \\ 
& \le C \int_\mathbb{R} (|v^R_x|+|v^S_x|)^2 |\phi|^2 \, dx + C \int_\mathbb{R} (|v^R_x|+|v^S_x|) (|\phi_x|^2 + |\omega_x|^2 +|\omega_{xx}|^2 + |\psi_{xx}|^2) \,dx \\ 
&\le C \delta_0 (G^S_v +G^R + \|\phi_x\|_{L^2}^2 + D_{u_2}) + \tau (\mathcal{G}_{w_2} +\mathcal{D}_{w_2}). 
\end{align*}

\noindent $\bullet$ (Estimate of $T_5$):  By using Young's inequality, we get
\begin{align*}
T_5 \le C D_{u_2} + \tau \mathcal{D}_{w_2}.
\end{align*}

\noindent $\bullet$ (Estimate of $T_6$): 
To estimate $T_{6}$ we need to estimate $(Q_1)_x$ and $(Q_2)_x$. We note that
\begin{align*}
|(Q_1^I)_x| \le& C\Big( |(\vr_{xx},\vr_{xxx},\vr_{xxx}\vr_x,(\vr_{xx})^2,\vr_{xxxx},(\vr_x)^2\vr_{xx})| |\vs-v_m| \\ 
&\quad + |(\vr_x,\vr_{xx},\vr_{xx}\vr_x,\vr_{xxx},(\vr_x)^3)| |\vs_x|\\
&\quad + |(\vs_{xx},\vs_{xxx},\vs_{xxx}\vs_x,(\vs_{xx})^2,\vs_{xxxx},(\vs_x)^2(\vs_{xx}))| | \vr-v_m|  \\
&\quad +|(\vs_x,\vs_{xx},\vs_{xx}\vs_x,\vs_{xxx},(\vs_x)^3)| |\vr_x| \\ 
&\quad +|( \ur_{xx},\vr_x \vr_{xx},\vr_{xxx})| |\vs_x| +|( \ur_x,(\vr_x)^2,\vr_{xx})| |\vs_{xx}| \\ 
&\quad +|(\us_{xx}, \vs_x \vs_{xx},\vs_{xxx})| |\vr_x|+|(\us_x, (\vs_x)^2,\vs_{xx})| |\vr_{xx}|\Big)
\end{align*}
and 
\begin{equation*} %\label{Q1Rx-est}
|(Q_1^R)_x| \le C \left( |\ur_{xxx}| + |\ur_{xx}| |\vr_{x}|+ |\ur_{x}| |\vr_{xx}| +|\vr_{xxxx}|+|\vr_{xxx}| |\vr_{x}| +|\vr_{xx}|^2 + |\vr_{x}|^2|\vr_{xx}| \right).
\end{equation*}
Therefore, we obtain  
\begin{align*}
\|(Q^I_1)_x\|_{L^2}^2 &\le C \left\|\left( |\vr_x|  |\vs-v_m| + |\vr_x| |\vs_x| +  |\vs_x| |\vr-v_m| + |\vr_x| |\vs_x| \right)\right\|_{L^2}^2 \\ 
&\le C \delta_S \delta_R e^{-C \delta_S t}
\end{align*}
and
\begin{align*}
\int_0^t &\|(Q_1^R)_x\|_{L^2}^2 ds \\
&\le C \int_0^t
\big(\|\vr_{xxxx}\|_{L^2}^2+ \| \vr_{xxx}\|_{L^4}^4+\|\ur_{xxx}\|_{L^2}^2+ \|(\vr_{xx},\ur_{xx})\|_{L^4}^4  + \|(\vr_x,\ur_x)\|_{L^4}^4+ \|\vr_x\|_{L^8}^8 \big) ds\\
&\le C \left( \delta_R + \delta_R^3 + \delta_R + \delta_R^3 + \delta_R^3 + \delta_R^8 \right) \le C \delta_R.
\end{align*}
On the other hand, we also derive 
\begin{align*}
\| (Q_2)_x \|_{L^2}^2 &\le C |\dot{X}| \left( |\vs_{xxx}||\vr-v_m| +|\vs_{xx}|\vr_x| + |\vs_{x}| |\vr_{xx}|  \right) \le C \varepsilon_1 \delta_S \delta_R e^{-C \delta_S t}.
\end{align*}
Therefore, $T_{6}$ can be bounded as
\begin{align*}
T_{6} &\le \int_\mathbb{R} |(Q_1)_x| |\omega_x| + |(Q_2)_x| |\psi_x| \, dx \\
&\le C \left( \|(Q_1^I)_x\|_{L^2}^2 + \|(Q_1^R)_x\|_{L^2}^2 + \frac{1}{\varepsilon_1
} \|(Q_2)_x\|_{L^2}^2 \right) + \tau \mathcal{G}_{w_2} + C \varepsilon_1 D_{u_1}.
\end{align*}
Thus, we combine the estimates on $T_i$ and use the smallness of $\tau$, $\delta_0$, and $\e_1$ to get
\begin{align*}
\frac{d}{dt}\int_{\mathbb{R}}& \psi_x\omega_x \, dx+\frac{1}{2}\left( \mathcal{G}_{w_2} + \mathcal{D}_{w_2} \right) \\
&\le \delta_S |\dot{X}|^2 + C D_{u_2}+C \delta_0 \left(\norm{\phi_x}_{L^2}^2+G^S_v + G^R \right)+C(\varepsilon_1+\delta_0) \left(D_{u_1}+\norm{\omega}_{L^2}^2 \right)\\
&\quad +C \left( \|(Q_1^I)_x\|_{L^2}^2 + \|(Q_1^R)_x\|_{L^2}^2 + \frac{1}{\varepsilon_1
} \|(Q_2)_x\|_{L^2}^2 \right).
\end{align*}
After integrating with respect to time, we obtain
\begin{align*}
\int_{\mathbb{R}} &\psi_x\omega_x \, dx+\frac{1}{2} \int_{0}^t \left( \mathcal{G}_{w_2} + D_{w_2} \right) \, ds 
\\
&\le \int_{\mathbb{R}} \psi_{0x}\omega_{0x} \, dx+ \delta_S \int_0^t |\dot{X}|^2 \, d s + C \int_{0}^{t} D_{u_2} \, ds \\ 
&\quad +C(\varepsilon_1+\sqrt{\delta_0}) \int_0^t \left(\norm{\phi_x}_{L^2}^2+G_w+G^S_v+G^R+D_{u_1} \right) \, d s+C \delta_R.
\end{align*}
Finally, since $D_{w_1}\sim \mathcal{G}_{w_2}$ and $D_{w_2}\sim \mathcal{D}_{w_2}$, this completes the proof of Lemma \ref{lem: est-2}.

\end{appendix}

\bibliographystyle{amsplain}
\bibliography{reference} 
	
\end{document}